\theoremstyle{plain}
\newtheorem{thm}{Theorem}[subsection]
\newtheorem{cor}[thm]{Corollary}
\newtheorem{lem}[thm]{Lemma}
\newtheorem{prop}[thm]{Proposition}
\newtheorem{newthm}{Theorem}[subsection]
\theoremstyle{definition}
\newtheorem{defn}[thm]{Definition}
\newtheorem{rem}[thm]{Remark}
\newcommand{\al}{\alpha}
\newcommand{\be}{\beta}
\newcommand{\f}{\varphi}
\newcommand{\de}{\delta}
\newcommand{\s}{\sigma}
\newcommand{\dd}{\partial}
\newcommand{\g}{\gamma}
\newcommand{\G}{\Gamma}
\newcommand{\Si}{\Sigma}
\newcommand{\D}{\Delta}
\renewcommand{\k}{\kappa}
\renewcommand{\t}{\tau}
\renewcommand{\L}{\mathcal{B}}
\newcommand{\I}{\mathcal{I}}
\newcommand{\N}{\mathbb N}
\newcommand{\Z}{\mathbb Z}
\newcommand{\Q}{\mathbb Q}
\newcommand{\C}{\mathbb C}
\newcommand{\Or}{\mathcal O}
\newcommand{\Ø}{\emptyset}
\newcommand{\K}{\mathcal{K}}
\newcommand{\F}{\mathcal{F}}
\newcommand{\rank}{\textup{rank}}
\newcommand{\im}{\textup{Im}}
\newcommand{\rk}{\textup{rk}}
\newcommand{\Sp}{\textup{Sp}}
\newcommand{\link}{\textup{link}}
\renewcommand{\star}{\textup{star}}
\newcommand{\iso}{\cong}
\newcommand{\set}[1]{\left\{#1\right\}}
\newcommand{\To}{\longrightarrow}
\newcommand{\Tto}{\Rightarrow}
\newcommand{\into}{\hookrightarrow}
\newcommand{\abs}[1]{\left\vert#1\right\vert}
\newcommand{\id}{\textup{id}}
\newcommand{\pr}{\textup{pr}}
\newcommand{\ideal}[1]{\langle #1 \rangle}
\newcommand{\idealQ}[1]{\langle #1 \rangle_{\Q}}
\newcommand{\del}{\subseteq}
\newcommand{\fra}{\setminus}
\newcommand{\tensor}{\otimes}
\newcommand{\xort}{\ideal{x}^{\perp}}
\newcommand{\xyort}{\ideal{x,y}^{\perp}}
\newcommand{\ialg}[1]{i_{\textup{alg}}\!\left(#1\right)}
\newcommand{\Lconn}[1]{\L^{#1}_{Z,x,k}}
\newcommand{\Lconnk}[1]{\L^{2}_{Z,x,#1}}
\newcommand{\Lconnxk}[2]{\L^{1}_{Z,#1,#2}}
\newcommand{\LconnZ}[1]{\L^{1}_{#1,x,k}}
\newcommand{\Letdel}{\L^{\D^k}}
\newcommand{\Letdelx}{\L^{\D^k;\xort}}
\newcommand{\Letblank}[1]{\L^{#1}}
\newcommand{\Letblankx}[1]{\L^{#1;\xort}}
\newcommand{\Ll}{\L^{a_{1}}}
\newcommand{\Lblank}[1]{\L^{a_{1},#1}}
\newcommand{\Ldel}{\L^{a_{1},\D^k}}
\newcommand{\Ldelpos}{\L^{a_{1}}_{\gcd\ne0}}
\newcommand{\Ldelgode}{\L^{a_{1},\D^k}_{\gcd=1}}
\newcommand{\Lblankgode}[1]{\L^{a_{1},#1}_{\gcd=1}}
\newcommand{\Ldelgodeb}{\L^{a_{1},\D^k}_{\gcd=1;\,t}}
\newcommand{\Lblankgodeb}[1]{\L^{a_{1},#1}_{\gcd=1;\,t}}
\newcommand{\Mdel}{\mathcal{M}^{\D_1,\D_2,\D_3}_{D(\D_1)\,\mid\, D(\D_2)}}
\newcommand{\Mdelblank}[3]{\mathcal{M}^{#3}_{#1\,\mid\, #2}}
\newcommand{\Ndel}{\mathcal{N}_{\D}}
\newcommand{\Ndelblank}[1]{\mathcal{N}^{#1}_{\D}}
\newcommand{\Ndelblankx}[1]{\mathcal{N}^{#1; \xort}_{\D}}
\newcommand{\vv}{\mathfrak{v}}
\newcommand{\ww}{\mathfrak{w}}
\newcommand{\uu}{\mathfrak{u}}
\newcommand{\gcdto}{\textup{gcd}_2}
\newcommand{\lediv}{\le_{\textup{div}}}
\newcommand{\gediv}{\ge_{\textup{div}}}
\newcommand{\multi}{multi-simplicial complex}
\newcommand{\multis}{multi-simplicial complexes}
\newcommand{\Multis}{Multi-simplicial complexes}
\newcommand{\simp}{\textup{simp}}
\newcommand{\Ltreort}[1]{\Lambda^3\idealQ{#1}^{\perp}}
\newcommand{\LtreH}{\Lambda^3H_\Q}
\newcommand{\Ltre}{\Lambda^3}
\newcommand{\Lort}[2]{\Lambda^{#1}\idealQ{#2}^{\perp}}
\newcommand{\ortZ}[1]{\ideal{#1}^\perp}
\newcommand{\ortQ}[1]{\idealQ{#1}^\perp}
\newcommand{\streg}[1]{\overline{#1}}
\newcommand{\bi}{b^i}
\author{Søren K. Boldsen\footnote{This author is currently supported by a grant from the Carlsberg Foundation.}{ } and Mia Hauge Dollerup}
\title{Towards representation stability for the second homology of the Torelli group}
\date{\today}
\begin{document}
\begin{titlepage}\maketitle\thispagestyle{empty}
\begin{abstract}
We show for $g\ge 7$ that the second homology group of the Torelli group, $H_2(\I_{g,1};\Q)$, is generated as an $\Sp(2g,\Z)$-module by the image of $H_2(\I_{6,1};\Q)$ under the stabilization map. In the process we also show that the quotient $B(F_{g,i};i)/\I_{g,i}$ of the complex of arcs with identity permutation by the Torelli group  is $(g-2)$-connected, for $i=1,2$. 
\end{abstract}
\tableofcontents\end{titlepage}\newpage
\pagestyle{fancy}
\section{Introduction}
Let $F_{g,r}$ denote a smooth compact connected oriented surface of genus $g$ and $r$ boundary components. Let $\G_{g,r}=\G(F_{g,r})$ denote its mapping class group, i.e. $\G(F)=\pi_0(\textup{Diff}^+(F,\dd F))$, where $\textup{Diff}^+(F,\dd F)$ is the group of orientation-preserving diffeomorphisms of $F$ that restrict to the identity on $\dd F$. 

The Torelli group $\I_{g,1}$ is the subgroup of $\G_{g,1}$ defined by the exact sequence
\begin{equation}\label{e:Torelli}
	1\To \I_{g,1}\To \G_{g,1}\To \Sp(2g,\Z)\To 1.
\end{equation}
To define the Torelli group of a surface with more than one boundary component, we proceed as in \cite{Putman1}. Suppose we have an embedding $S\To F_{g,1}$ such that $F_{g,1}\fra S$ is connected. Write $\G(F_{g,1},S)$ for the image of $\G(S)$ in $\G_{g,1}$ under the map induced by this embedding. Then one defines,
\begin{equation}
	\I(F_{g,1},S):= \I(F_{g,1})\cap \G(F_{g,1},S).
\end{equation}
In this paper we are interested in the case $S=F_{g-1,2}$, and the embedding is $\Si_{1,-1}:F_{g-1,2}\To F_{g,1}$ which glues on a pair of pants. We write  $\I_{g-1,2}$ for $\I(F_{g,1},F_{g-1,2})$ defined via this embedding. 

There is an exact sequence similar to \eqref{e:Torelli}, as follows: Let $\be$ an arc such that $F_{g-1,2}\To F_{g,1}$ is the inclusion of the cut-up surface $(F_{g,1})_\be\To F_{g,1}$ as on Figure \ref{f:beta}, denote by $\tilde \be$ the closing-up of $\be$ (see Figure \ref{f:closeup}), and let $b=[\tilde\be]\in H_1(F_{g,1};\Z)$ be its homology class. Then
\begin{equation}\label{e:Torelli2}
	1\To \I_{g-1,2}\To \G(F_{g,1},F_{g-1,2})\To \Sp(2g,\Z)_b\To 1.
\end{equation}
where $\Sp(2g,\Z)_b\del \Sp(2g,\Z)$ is the stabilizer subgroup for $b$. 

\begin{figure}[!hbt]
\begin{center}
\setlength{\unitlength}{0.5cm}
\begin{picture}(7,1.8)(0,.7)
\color[rgb]{0.7,0.7,0.7}\qbezier(4,0.5)(4.5,0)(4,-0.5)
\qbezier(4,1.5)(4.5,2)(4,2.5)\color{black}
%xi
\qbezier(6.8,2)(5.5,2)(5,1)\color[rgb]{0.7,0.7,0.7}\qbezier(5,1)(6,1)(6.3,-.5)\color{black}
\qbezier(6.3,-.5)(6.5,0)(6.8,0)
\put(1.3,0.7){$F_{g,2}$}\put(7.6,0.7){$F_{g+1,1}$}
\qbezier(4,-0.5)(1,-0.5)(1,-0.5)\qbezier(4,2.5)(1,2.5)(1,2.5)
\qbezier(4,0.5)(3.5,0)(4,-0.5) \qbezier(4,1.5)(3.5,2)(4,2.5)
\qbezier(4,0.5)(2,1)(4,1.5) \qbezier(4,0.5)(6,1)(4,1.5)
\qbezier(7,-0.5)(6.3,1)(7,2.5) \qbezier(7,-0.5)(5.5,-0.5)(4,-0.5)
\qbezier(7,2.5)(7,2.5)(4,2.5)
\qbezier(7,-0.5)(7.7,1)(7,2.5)
%\qbezier(3.775,0)(6.5,0)(6.8,0)
%\put(3.775,0){\circle*{0.2}}
\put(6.8,0){\circle*{0.2}}
%\qbezier(3.775,2)(6.5,2)(6.8,2)
%\put(3.775,2){\circle*{0.2}}
\put(6.8,2){\circle*{0.2}}
\put(5.8,1.1){$\be$}
\put(3,-0.3){$\tilde \be$}
\end{picture}
\end{center}\caption{The arc $\be$ such that $F_{g-1,2}=(F_{g,1})_\be$, and its close-up $\tilde \be$.}\label{f:beta}\end{figure}
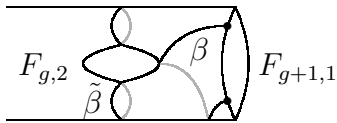
We can now state our main theorem, which is part of Conjecture 6.1 of \cite{Repstab} (more below):
\begin{thm}\label{t:main}
Let $g\ge 7$. The image of the map induced by $F_{g-1,1}\to F_{g,1}$,
\begin{equation*}
    H_2(\I_{g-1,1};\Q)\To H_2(\I_{g,1};\Q),
\end{equation*}
generates $H_2(\I_{g,1};\Q)$ as an $\Sp(2g;\Z)$-module.

As a consequence, $H_2(\I_{g,1};\Q)$ is generated as an $\Sp(2g,\Z)$-module by the image of $H_2(\I_{6,1};\Q)$.
\end{thm}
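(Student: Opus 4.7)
The strategy is to exploit the action of $\I_{g,1}$ on the arc complex $B:=B(F_{g,1};1)$, combining the classical high connectivity of $B$ (via Hatcher-type arguments) with the $(g-2)$-connectivity of $B/\I_{g,1}$ stated in the abstract. Assuming $g\ge 4$ so that $B$ is at least $2$-connected, the equivariant spectral sequence for the Borel construction $E\I_{g,1}\times_{\I_{g,1}} B$ takes the form
\begin{equation*}
E^1_{p,q} \;=\; \bigoplus_{[\sigma]\in B_p/\I_{g,1}} H_q\!\left(\mathrm{Stab}_{\I_{g,1}}(\sigma);\Q_{\pm}\right) \;\Longrightarrow\; H_{p+q}(\I_{g,1};\Q),
\end{equation*}
converging in a range that includes total degree $2$. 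The stabilizer of a single arc $\be$ is exactly $\I(F_{g,1},F_{g-1,2})=\I_{g-1,2}$ by the discussion in the introduction, and stabilizers of $p$-simplices are iterated subsurface Torelli groups roughly of shape $\I_{g-p-1,p+2}$. Transitivity of $\G_{g,1}$ on arcs with identity permutation implies transitivity of $\Sp(2g,\Z)$ on $B_0/\I_{g,1}$, so it is enough to track a single orbit representative at each filtration level.

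The core computation is to show $F_0 H_2(\I_{g,1};\Q)=H_2(\I_{g,1};\Q)$, where $F_\bullet$ is the spectral-sequence filtration. The $(g-2)$-connectivity of $B/\I_{g,1}$ yields $E^2_{p,0}=0$ for $1\le p\le g-2$, in particular killing $E^\infty_{2,0}$. To eliminate $E^\infty_{1,1}$, I would run the analogous spectral sequence for $\I_{g-1,2}$ acting on the arc complex of $F_{g-1,2}$ --- this is precisely where the $i=2$ case of the connectivity result enters --- and combine it with Johnson's computation of $H_1$ of Torelli groups to identify the relevant $d^1$ and $d^2$ differentials. The aim is that any contribution to $H_2(\I_{g,1};\Q)$ outside $F_0$ already lies in the $\Sp(2g,\Z)$-orbit of the image from $H_2(\I_{g-1,2};\Q)$, giving an $\Sp$-module surjection of $H_2(\I_{g-1,2};\Q)$ onto $H_2(\I_{g,1};\Q)$.

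To replace $\I_{g-1,2}$ by $\I_{g-1,1}$ as required by the statement, I would factor the stabilization $\I_{g-1,1}\to\I_{g,1}$ through $\I_{g-1,2}$ and apply a capping-type long exact sequence, in the spirit of Putman's subsurface-Torelli framework, to show that the image of $H_2(\I_{g-1,1};\Q)\to H_2(\I_{g-1,2};\Q)$ already generates the target as a module over the corresponding symplectic stabilizer. Combined with the preceding paragraph this yields the $\Sp(2g,\Z)$-generation claimed. The final assertion concerning $H_2(\I_{6,1};\Q)$ is then a formal consequence of iterating the main statement for $g,g-1,\ldots,7$ and composing stabilization maps. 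The main obstacle I anticipate is the control of $E^2_{1,1}$: the $d^1$-differential couples $H_1$ of successive Torelli stabilizers, and tracking it $\Sp$-equivariantly --- together with its compatibility across the capping-off from $i=2$ to $i=1$ --- is where the main technical work lies. The remaining steps should follow well-established patterns from the representation-stability literature.
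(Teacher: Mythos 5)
Your broad strategy matches the paper's: factor the stabilization $F_{g-1,1}\to F_{g,1}$ through $F_{g-1,2}$, run the equivariant spectral sequences for $\I_{g,1}$ on $B(F_{g,1};1)$ and for $\I_{g-1,2}$ on $B(F_{g-1,2};2)$, use the $(g-2)$-connectivity of the quotients to kill $E^2_{p,0}$, and use transitivity of $\Sp(2g,\Z)$ (resp. its stabilizer subgroup) on the vertex orbit to reduce to a single component map. The $0$-simplex stabilizer being $\I_{g-1,2}$ and the reduction to showing that the bottom-row and next-row contributions vanish in total degree $\le 2$ are also correct, modulo a shift by $1$ in the indexing (the paper uses an augmented spectral sequence with an extra column so that the target is $0$, not $H_{p+q}(\I_{g,1})$).

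There is, however, a genuine gap, and it is precisely where you anticipate trouble. Killing the contribution of $E^\infty_{1,1}$ (equivalently, in the paper's indexing, showing $E^2_{2,1}=0$) is not something the \emph{second} spectral sequence can do for you: the second spectral sequence governs the other half of the factorization, $H_2(\I_{g-1,1})\to H_2(\I_{g-1,2})$, and has its own $E^2_{2,1}$ that must independently be shown to vanish. So "run the analogous spectral sequence for $\I_{g-1,2}$" is not a mechanism that eliminates $E^\infty_{1,1}$ of the first sequence; that vanishing must be established directly in each of the two spectral sequences. Moreover, "identify the relevant $d^1$ and $d^2$ differentials" via Johnson's computation is not yet a proof: the identification of $E^1_{p,1}$ with sums of $\Ltreort{\ww}$ via the Johnson homomorphism (Berg's theorem) is the easy part; what occupies the entire second half of the paper is a discrete Morse theory argument on the resulting chain complex $E^1_{*,1}$, built from carefully chosen filtrations of $\L(g)$ and $\Ll(g)$ and explicit dual-vector constructions, to exhibit a Morse vector field spanning $E^1_{0,1}$, $E^1_{1,1}$, $E^1_{2,1}$ and conclude exactness at $E^1_{2,1}$. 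Your proposal contains no candidate mechanism for this step, and the "capping-type long exact sequence" you invoke to relate $\I_{g-1,1}$ to $\I_{g-1,2}$ does not appear in the paper and would not substitute for it — the paper instead runs the second spectral sequence and again must prove $E^2_{2,1}=0$ there by the same Morse-theoretic machinery, with substantial extra complications coming from the constraint $\rk^{a_1}=1$ in $\Ll(g)$.

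A smaller point: the coefficient system in your $E^1$ term is plain $\Q$, not $\Q_\pm$, because simplices of $B(F;i)$ are \emph{ordered} tuples of arcs and the action is rotation-free, so stabilizers fix simplices pointwise.
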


We will investigate the group homology of the Torelli group $\I_{g,i}$ ($i=1,2$) via a spectral sequence for the action of $\I_{g,i}$ on a highly connected complex $B_*(F_{g,i};i)$: Given a $d$-connected complex $X=\set{X_p}_{p\ge 0}$ with a rotation-free action of a group $G$, there is an augmented spectral sequence $E^*_{p,q}(X)$ with 
\begin{equation}
	E^1_{p,q}(X) \iso \bigoplus_{\s\in \D_{p-1}} H_q(G_\s) \quad \Tto 0, \quad \text{for } p+q\le d+1,
\end{equation}
where $G_{\s}\del G$ is the stabilizer subgroup for the simplex $\s$, and $\D_p$ denotes a set of representatives for the orbit set $X_p/G$. See e.g. \cite{Brown}, VII \S 7.

We now define $B_*(F;i)$. First, recall Harer's arc complex $C_*(F;i)$, where $i\in\set{1,2}$; see \cite{Harer}. This is the simplicial complex whose $n$-simplices are $n+1$ isotopy classes of arcs joining two fixed points on $\dd F$ (if $i=1$, the points are on the same boundary component, if $i=2$ they are on different boundary components); the arcs must be disjoint (away from endpoints) and must not disconnect $F$. $C_*(F;i)$ has an obvious rotation-free action of $\G(F)$.

A simplex gives rise to a permutation; namely, given an order of the arcs at the starting point, how the arcs are permuted at the ending point (read off with the opposite orientation, by convention). Then $B_*(F;i)$ is the subcomplex of $C_*(F;i)$ of simplices with the identity permutation. For an illustration, see the left part of Figure \ref{f:closeup}. It has been shown that $B_*(F_{g,r};i)$ is $(g-3+i)$-connected by \cite{Ivanov1} Thm. 3.5 for $i=1$, and in the general case by \cite{RW}, Thm. A1.

For $F_{g,1}$, the action of $\G_{g,1}$ restricts to an action of $\I_{g,1}$. For $F_{g-1,2}$, we embed $B_*(F_{g-1,2};2)$ into $B_*(F_{g,1};1)$ by extending the arcs of each arc simplex parallelly along two fixed disjoint arcs in the pair of pants, as shown in Figure \ref{f:extendarcs}. Consequently, $\I_{g-1,2}$ acts on $B_*(F_{g-1,2};2)$.
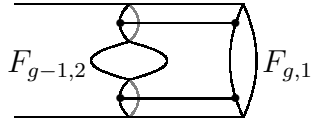
\begin{figure}[!htb]
\begin{center}
\setlength{\unitlength}{0.5cm}
\begin{picture}(7,1.6)(0,0.7)
\color[rgb]{0.5,0.5,0.5}\qbezier(4,0.5)(4.5,0)(4,-0.5)
\qbezier(4,1.5)(4.5,2)(4,2.5)\color{black}
\put(.8,0.7){$F_{g-1,2}$}\put(7.5,.7){$F_{g,1}$}
\qbezier(4,-0.5)(1,-0.5)(1,-0.5)\qbezier(4,2.5)(1,2.5)(1,2.5)
\qbezier(4,0.5)(3.5,0)(4,-0.5) \qbezier(4,1.5)(3.5,2)(4,2.5)
\qbezier(4,0.5)(2,1)(4,1.5) \qbezier(4,0.5)(6,1)(4,1.5)
\qbezier(7,-0.5)(6.3,1)(7,2.5) \qbezier(7,-0.5)(5.5,-0.5)(4,-0.5)
\qbezier(7,2.5)(7,2.5)(4,2.5) \qbezier(7,-0.5)(7.7,1)(7,2.5)
\qbezier(3.775,0)(6.5,0)(6.8,0)\put(3.775,0){\circle*{0.2}}\put(6.8,0){\circle*{0.2}}
\qbezier(3.775,2)(6.5,2)(6.8,2)\put(3.775,2){\circle*{0.2}}\put(6.8,2){\circle*{0.2}}
\end{picture}
\end{center}\caption{Extending arc simplices $B_*(F_{g-1,2};2)\To B_*(F_{g,1};1)$}\label{f:extendarcs}\end{figure}

The main result of the first part, which is proved in Section \ref{ss:conn}, is then
\begin{thm}\label{t:intro}
The quotient complex $B_*(F_{g,i};i)/\I_{g,i}$ is $(g-2)$-connected, for $i=1,2$.
\end{thm}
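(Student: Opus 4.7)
My plan is to identify $B_*(F_{g,i};i)/\I_{g,i}$ with a concrete simplicial complex constructed from the symplectic lattice $H_1(F_{g,i};\Z)\iso\Z^{2g}$, and then invoke a known connectivity result for complexes of that type.

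The first step is to attach a homological invariant to each simplex. To a vertex $[\al]$ of $B_*(F_{g,i};i)$ I assign $v(\al)=[\tilde\al]\in H_1(F_{g,i};\Z)$, where $\tilde\al$ is the close-up of $\al$ as on Figure~\ref{f:beta} (for $i=2$ one first caps off one of the boundary components). Since $\al$ is non-separating, $v(\al)$ is primitive. For a $p$-simplex $\s=\{\al_0,\dots,\al_p\}$ with identity permutation the closed-up curves $\tilde\al_j$ are pairwise disjoint simple closed curves, so $v(\s):=\{v(\al_0),\dots,v(\al_p)\}$ is an unordered $(p+1)$-tuple of primitive, pairwise symplectically orthogonal classes. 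Since $\I_{g,i}$ acts trivially on $H_1$, the assignment $v$ descends to the orbit space.

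The main content is to show that $v$ induces a bijection between $B_p(F_{g,i};i)/\I_{g,i}$ and the set of such tuples (subject to a mild geometric realisability condition). For injectivity I would use a change-of-coordinates argument: if $v(\s)=v(\s')$, the cut-up surfaces $F_{g,i}\fra\s$ and $F_{g,i}\fra\s'$ are homeomorphic in a labelled way, yielding some $\f\in\G(F_{g,i})$ with $\f(\s)=\s'$. Composing with mapping classes supported in the complement of $\s'$ — which can realise any prescribed element of $\Sp(2g,\Z)$ fixing the span of $v(\s')$ — allows one to arrange $\f\in\I_{g,i}$. For surjectivity I would invoke a geometric realisation lemma of Putman-type, producing a disjoint simple closed curve system with the prescribed homology classes, and then open up each curve into an arc while preserving the identity-permutation condition. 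Once this identification is in place, the quotient is recognised as a complex of isotropic unimodular tuples in the symplectic $\Z$-module $\Z^{2g}$, whose $(g-2)$-connectivity is a theorem of Charney (building on van der Kallen).

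I expect the main obstacle to be the orbit analysis just sketched: the adjustment of $\f$ must yield an element of $\I_{g,i}$ without disturbing the identity-permutation condition, which requires a careful choice of supporting subsurface. In the $i=2$ case one additionally has to verify that the homological invariant $v$ defined via boundary capping is independent of auxiliary choices up to Torelli action, and that the opening-up step in the surjectivity argument remains valid when the arc endpoints lie on different components of $\dd F$.
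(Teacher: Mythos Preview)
Your first step---identifying the Torelli quotient with an algebraic complex built from homology---is correct in spirit and matches the paper's Proposition~\ref{p:complex} (attributed there to van den Berg). Two minor corrections: the simplices are \emph{ordered} tuples (the arcs inherit an order at the starting point), and the target consists of ordered bases of isotropic direct \emph{summands} of $H$, not merely tuples of pairwise-orthogonal primitive vectors; these conditions differ (e.g.\ $(a_1,\,2a_1+a_2)$ satisfies the latter but not the former).

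The real gap is the connectivity step, and it is fatal for $i=2$. Your ``cap off a boundary component'' move does not produce a standard complex. The group $\I_{g,2}$ is defined via the embedding $F_{g,2}\hookrightarrow F_{g+1,1}$, so the homological invariant naturally lands in $H_1(F_{g+1,1};\Z)\cong\Z^{2g+2}$, and the quotient is identified with the subcomplex $\Ll(g)\subset\L(g+1)$ consisting of those isotropic bases \emph{all of whose vectors have $a_1$-rank exactly~$1$}. There is no Charney- or van der Kallen-type theorem in the literature that handles this affine-slice subcomplex; proving that $\Ll(g)$ is $(g-2)$-connected is precisely the new content of the paper (Sections~\ref{ss:conn2} and~\ref{ss:conntraels}, a chain of reductions Prop.~\ref{p:reduce} through Lemma~\ref{l:reduce2} to Prop.~\ref{p:putmansver}). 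Even for $i=1$, the paper does not quote a ready-made result but proves connectivity of $\L(g)$ directly (Prop.~\ref{p:conn1}), modelled on Putman and repairing a gap in his argument. So the step you propose to black-box is in fact the entire theorem.
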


We can now outline the proof of our main theorem.

\begin{proof}[Proof of Theorem \ref{t:main}.]
The stabilization map $F_{g-1,1}\To F_{g,1}$ is a composition $\Si_{1,-1}\circ \Si_{0,1}: F_{g-1,1}\To F_{g-1,2}\To F_{g,1}$, where the $\Si_{i,j}$ are the maps that glue on a pair of pants.

For $\Si_{1,-1}:F_{g-1,2}\To F_{g,1}$ we use the spectral sequence $E^r_{p,q}(F_{g,1};1)$ for the action of $\I_{g,1}$ on the arc complex $B_*(F_{g,1};1)$. Since $B_*(F_{g,1};1)$ is $(g-2)$-connected, we obtain
\begin{equation}\label{e:spectral}
    E^1_{p,q}(F_{g,1};1)\iso \bigoplus_{\al\in\D_{p-1}}H_q(\I(F_{g,1})_{\al};\Q)\Tto 0\text{ for } p+q\le g-1.
\end{equation}
Here, $\D_{p}$ denotes a set of representatives for the orbits $B_p(F;1)/\I(F)$, and $\I(F)_{\al}$ is the stabilizer subgroup of $\al$ in $\I(F)$. Just as for mapping class groups, one shows that $\I(F)_{\al}\iso \I(F)\cap\G(F,F_{\al})=\I(F,F_\al)$, where $F_\al$ denotes $F$ cut up along $\al$. Also, by Prop. \ref{p:complex} $(i)$, $B_p(F_{g,1};1)/\I_{g,1}\iso \L(g)^{(p)}$. 

\noindent Choosing a section $T:\L(g)\To B_*(F_{g,1};1)$, we can rewrite \eqref{e:spectral} as
\begin{equation}\label{e:spectral2}
    E^1_{p,q}(F_{g,1};1)\iso \hspace{-0.3cm} \bigoplus_{\ww\in\L(g)^{(p-1)}}\hspace{-0.3cm} H_q(\I(F_{g,1},(F_{g,1})_{T(\ww)});\Q)\Tto 0\text{ for } p+q\le g-1.
\end{equation}
In particular, for $\al$ a $0$-simplex, we have $(F_{g,1})_{\al}\iso F_{g-1,2}$, and each component map of the differential
\begin{equation*}
d^1_{1,2}: \bigoplus_{w\in \L(g)^{(0)}}H_2(\I(F_{g,1};(F_{g,1})_{T(w)});\Q)\To  H_2(\I(F_{g,1});\Q)
\end{equation*}
is precisely the map $d^1_{1,2}(w): H_2(\I(F_{g,1};F_{g-1,2});\Q)\To  H_2(\I(F_{g,1});\Q)$ induced by $\Si_{1,-1}$.  The differential is $\Sp(2g,\Z)$-equivariant, which can be seen from the construction of $E^*_{p,q}(F_{g,1};1)$, using the resolution of $\G_{g,1}$ instead of $\I_{g,1}$, and applying \eqref{e:Torelli}. So since $\Sp(2g,\Z)$ acts transitively on the $0$-simplices $\L(g)^{(0)}$, the image of $d^1_{2,1}$ equals the $\Sp(2g,\Z)$-module generated by the image of just one component map $d^1_{2,1}(w)=(\Si_{1,-1})_*$. We will show $d^1_{2,1}$ is surjective.

For the map $\Si_{0,1}: F_{g-1,1}\To F_{g-1,2}$, we use the spectral sequence $E^r_{p,q}(F_{g-1,2};2)$ for the action of the Torelli group $\I_{g-1,2}=\I(F_{g,1};F_{g-1,2})$ on the arc complex $B_*(F_{g-1,2};2)$. We have $B_p(F_{g-1,2};2)/\I_{g-1,2}\iso \Ll(g-1)$ by Prop. \ref{p:complex} $(ii)$, so in a similar manner as above we obtain
\begin{equation}\label{e:spectral3}
    E^1_{p,q}(F_{g-1,2};2)\iso \hspace{-0.5cm}\bigoplus_{\ww\in\L^{a_1}(g)^{(p-1)}}\hspace{-0.5cm}H_q(\I(F_{g,1},(F_{g-1,2})_{T(\ww)});\Q) \Tto 0\text{ for } p+q\le g-1.
\end{equation}
For a $0$-simplex $\al$, we have $(F_{g-1,2})_{\al}\iso F_{g-1,1}$ and 
$d^1_{2,1}$ has component maps equal to the map induced by $\Si_{0,1}$. The stabilizer subgroup $\Sp(2g,\Z)_{b}$ acts transitively on $\Ll(g-1)^{(0)}$, and $d^1_{2,1}$ is $\Sp(2g,\Z)_b$-equivariant from \eqref{e:Torelli2}. Thus the image of $d^1_{2,1}$ equals the $\Sp(2g,\Z)_{b}$-module generated by the image of a single component map $(\Si_{0,1})_*$. We will show $d^1_{2,1}$ is surjective.

We see that to prove the main theorem, we must show that the differential $d^1_{1,2}$ is surjective in both spectral sequences. To do this, since the spectral sequence converges to zero, it suffices to show that $E^2_{3,0}=0$ and $E^2_{2,1}=0$.

To show $E^2_{3,0}=0$, note $E^1_{p,0}(F_{g,1};1)=\bigoplus_{\ww\in\L(g)^{(p-1)}}\Q=C_{p-1}(\L(g);\Q)$ is the $(p-1)$st chain group of the augmented chain complex for $\L(g)$ with $\Q$-coefficients. Similarly, $E^1_{p,0}(F_{g-1,2};2)=C_{p-1}(\Ll(g-1);\Q)$. Since $\L(g)$ is $(g-2)$-connected, and $\Ll(g-1)$ is $(g-3)$-connected by Theorem \ref{t:intro}, the homology of the chain complexes is zero in degrees $\le g-3$. So since $g\ge 7$, we see $E^2_{3,0}=0$ in both cases.

This reduces the proof of the Theorem to showing that $E^2_{2,1}=0$ in both spectral sequences. This will be done in section \ref{ss:exact}.
\end{proof}
We briefly mention some accessible improvements of our results: Firstly, extending the result to any number of boundary components, and showing stability for $\Z$-coefficients instead of $\Q$-coefficients; both should be possible by the results of \cite{Berg}. Secondly, stability for lower genus, but using \cite{Berg} would either require improving her results, or only reduce the genus by 1, since in Theorem \ref{t:Berg}, the genus of $S$ must be at least $3$.

\vspace{0.3cm}
We close this introduction by placing the result of the Main Theorem into a larger context: The motivation behind this paper is the question of whether the second Morita-Miller-Mumford class $\k_2\in H^4(\G_{g,1})$ restricts non-trivially to $H^4(\I_{g,1})$ or not. One can approach this question by attempting to use the spectral sequence for the fibration \eqref{e:Torelli}, for which we must investigate $H^p(\Sp(2g;\Z);H^q(\I_{g,1}))$, we focus here on $q=2$. Such groups have been studied stably by \cite{Borel}, see in particular Theorem 4.4. One way to to show that the requirements of the theory are fulfilled would be to show that the $\Sp(2g,\Z)$-representations behave well under the stabilization map $H_2(\I_{g,1})\To H_2(\I_{g+1,1})$. The formalization of this is what \cite{Repstab} has termed representation stability, and they conjectured this for $H_q(\I_{g,1};\Q)$.

More precisely, the Main Theorem is basically one of the four conditions (namely, surjectivity) for representation stability of $H_2(\I_{g,1};\Q)$; the others are injectivity, rationality, and stability of the multiplicities of the irreducible representations. We have no results for these three conditions, though injectivity might be solved in a similar manner.
Another direction would be to show the Main Theorem for higher homology degrees. Our proof that $E^2_{2,1}=0$ is computational and specific to $H_1(\I_{g,1})$, and so is not readily generalizable. For the question of $\k_2$, representation stability for $H_3(\I_{g,1})$ would also be needed. 

\section[Connectivity of the quotient of the arc complex]{Connectivity of the quotient of the arc complex by the Torelli group}\label{ss:conn}

\subsection{A concrete description of the quotient complexes}
In this section, $H=H_1(F_{g,1};\Z)$. Let $\set{\al_1,\beta_1,\ldots,\al_g,\be_g}$ be a standard set of simple closed curves on $F_{g,1}$ as on Figure \ref{f:stdcurves}, with homology classes $a_i=[\al_i]$ and $b_i=[\be_i]$, such that $\set{a_1,b_1,\ldots,a_g,b_g}$ is a symplectic basis for $H$ with respect to the intersection form $\ialg{-,-}$. Make the convention that the curve $\be_1$ is one of the boundary components of $F_{g-1,2}$. 

\begin{figure}[!h]
\begin{center}
\setlength{\unitlength}{0.4cm}
\begin{picture}(14,7.5)(0,-3)
%betakurver
\qbezier(3,.35)(3.8,1.7)(3,3)
\color[rgb]{0.8,0.8,0.8}\qbezier(3,.35)(2.2,1.7)(3,3)\color{black}
\qbezier(7,.35)(7.8,1.7)(7,3)
\color[rgb]{0.8,0.8,0.8}\qbezier(7,.35)(6.2,1.7)(7,3)\color{black}
\qbezier(12,.35)(12.8,1.7)(12,3)
\color[rgb]{0.8,0.8,0.8}\qbezier(12,.35)(11.2,1.7)(12,3)\color{black}
%flade og navne
\qbezier(0,3)(-1,0)(0,-3)\qbezier(0,3)(1,0)(0,-3)
\put(0,3){\line(1,0){13}}\put(0,-3){\line(1,0){13}}
\put(9.1,0){$\ldots$}\put(0.8,-1){$\al_1$}\put(6.8,-2.2){$\al_2$}\put(11.8,-2.2){$\al_g$}
\put(3.5,2){$\be_1$}\put(7.5,2){$\be_2$}\put(12.5,2){$\be_g$}
\put(-2.4,0){$F_{g,1}$}
\put(3,3.2){$\overbrace{\hspace{4.4cm}}^{F_{g-1,2}}$}
%alfakurver
\qbezier(1.5,0)(1.6,1.4)(3,1.5)\qbezier(4.5,0)(4.4,1.4)(3,1.5)
\qbezier(1.5,0)(1.6,-1.4)(3,-1.5)\qbezier(4.5,0)(4.4,-1.4)(3,-1.5)
\qbezier(5.5,0)(5.6,1.4)(7,1.5)\qbezier(8.5,0)(8.4,1.4)(7,1.5)
\qbezier(5.5,0)(5.6,-1.4)(7,-1.5)\qbezier(8.5,0)(8.4,-1.4)(7,-1.5)
\qbezier(10.5,0)(10.6,1.4)(12,1.5)\qbezier(13.5,0)(13.4,1.4)(12,1.5)
\qbezier(10.5,0)(10.6,-1.4)(12,-1.5)\qbezier(13.5,0)(13.4,-1.4)(12,-1.5)
%F_{g-1,2} rand
\color[rgb]{0.8,0.8,0.8}\qbezier(3,-.22)(2.2,-1.7)(3,-3)\color{black}
\qbezier(3,-.22)(3.8,-1.7)(3,-3)
%genushuller
\qbezier(2,.17)(3,-.63)(4,.17)\qbezier(2.2,0)(3,.67)(3.8,0)
\qbezier(6,.17)(7,-.63)(8,.17)\qbezier(6.2,0)(7,.67)(7.8,0)
\qbezier(11,.17)(12,-.63)(13,.17)\qbezier(11.2,0)(12,.67)(12.8,0)
\qbezier(13,3)(14.5,3)(14.5,0)\qbezier(14.5,0)(14.5,-3)(13,-3)
\end{picture}
\caption{Simple closed curves giving a symplectic basis of $H=H_1(F_{g,1})$.}\label{f:stdcurves}\end{center}\end{figure}
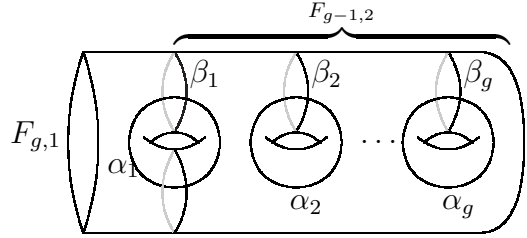
\vspace{-0.7cm}
\begin{defn}\label{d:rank} Given a symplectic basis $\set{a_1,b_1,\ldots,a_g,b_g}$ of $H$. For $x\in H$, express $x$ in the basis as $x=\sum_{i=1}^g(c_ia_i+d_ib_i)$. Then the $a_j$-rank of $x$ is  $\rk^{a_j}(x):=c_j$. Similarly, $\rk^{b_j}(x):=d_j$.
\end{defn}
Note: We have $\rk^{a_j}(x)=\ialg{x,b_j}$ and $\rk^{b_j}(x)=-\ialg{x,a_j}$.

\begin{defn}$\L(g)$ is the complex where each $n$-simplex is an ordered basis $(x_0,x_1,\ldots,x_n)$ for an isotropic summand of $H$. 

Let $\Ll(g-1)$ denote the subcomplex of $\L(g)$ given by those ordered isotropic bases $(x_1,\ldots, x_n)$ with $\rk^{a_{1}}(x_i)=1$ for all $i=1,\ldots,n$.
\end{defn}
Note, $\L(g)$ and $\Ll(g-1)$ are not simplicial complexes, but a simplex is determined by its vertices and their ordering. More in section \ref{ss:prelim}.

\begin{prop}[\cite{Berg}]\label{p:complex}\quad
\begin{itemize}
	\item[$(i)$] The quotient complex $B_*(F_{g,1};1)/\I_{g,1}$ is isomorphic to $\L(g)$.
	\item[$(ii)$] The quotient complex $B_*(F_{g-1,2};2)/\I_{g-1,2}$ is isomorphic to $\Ll(g-1)$.
\end{itemize}
\end{prop}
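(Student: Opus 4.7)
The plan is to define the evaluation map
\[
\Psi \colon B_*(F_{g,1};1)/\I_{g,1} \To \L(g),\qquad [\al_0,\ldots,\al_n]\longmapsto ([\tilde\al_0],\ldots,[\tilde\al_n]),
\]
which records the ordered tuple of homology classes of the closings-up of the arcs, and analogously for $B_*(F_{g-1,2};2)/\I_{g-1,2}\to \Ll(g-1)$. The task is then to show that $\Psi$ is a bijection of simplex sets compatible with face/ordering data. Well-definedness on orbits is immediate because $\I_{g,1}$ acts trivially on $H=H_1(F_{g,1};\Z)$ by definition, via \eqref{e:Torelli}.

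To see the image lies in $\L(g)$: the $\al_i$ are pairwise disjoint, so their closings-up may be realized as pairwise disjoint simple closed curves by closing up inside a collar of $\dd F_{g,1}$; hence $\ialg{[\tilde\al_i],[\tilde\al_j]}=0$ and $V:=\langle[\tilde\al_i]\rangle$ is isotropic. The non-disconnectedness of the arc system ensures that $H/V$ is torsion-free (e.g.\ by a Mayer--Vietoris argument comparing $F_{g,1}$ to its cut-up), so $V$ is a direct summand, and in particular the $[\tilde\al_i]$ are linearly independent. For $(ii)$, each arc in $B_*(F_{g-1,2};2)$ connects the boundary component representing $b_1=[\be_1]$ to the other boundary once, so $\rk^{a_1}([\tilde\al_i])=\ialg{[\tilde\al_i],b_1}=1$ after fixing a coherent orientation.

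For surjectivity I would use the change of coordinates principle: any ordered isotropic basis $(x_0,\ldots,x_n)$ is the image under some $\phi\in\Sp(2g,\Z)$ of a fixed standard isotropic basis, and $\phi$ lifts through $\G_{g,1}\twoheadrightarrow \Sp(2g,\Z)$ from \eqref{e:Torelli} to a mapping class carrying a standard arc simplex (one whose closings-up realise the standard basis) to a simplex with homology tuple $(x_0,\ldots,x_n)$. For $(ii)$ one replaces $\Sp(2g,\Z)$ by its stabilizer $\Sp(2g,\Z)_{b_1}$ and $\G_{g,1}$ by $\G(F_{g,1},F_{g-1,2})$, using \eqref{e:Torelli2}; transitivity of $\Sp(2g,\Z)_{b_1}$ on ordered isotropic bases with $a_1$-rank one is a standard extension of the symplectic change of coordinates.

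The main obstacle is injectivity. Given $\sigma,\sigma'$ with $\Psi(\sigma)=\Psi(\sigma')$, a change of coordinates principle for mapping class groups (classically: identity-permutation simplices of a fixed degree in the arc complex form a single $\G_{g,1}$-orbit) produces $\phi\in\G_{g,1}$ with $\phi(\sigma)=\sigma'$. Then $\phi_*\in\Sp(2g,\Z)$ fixes $V=\langle[\tilde\al_i]\rangle$ pointwise. To convert $\phi$ into a Torelli element I would post-compose with some $\psi$ supported on the cut-up surface $F_{\sigma'}:=F_{g,1}\fra\sigma'$: the inclusion-induced homomorphism $\G(F_{\sigma'})\To \G_{g,1}$ lands in the pointwise stabilizer of $\sigma'$, and its further composition with $\G_{g,1}\twoheadrightarrow\Sp(2g,\Z)$ hits precisely the pointwise stabilizer of $V$ (a standard lifting statement: $\Sp(V^\perp/V)$ is realized by Dehn twists on curves in $F_{\sigma'}$, together with transvections supported near the boundary components of $F_{\sigma'}$ corresponding to $V$). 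Choosing $\psi$ with $\psi_*=\phi_*^{-1}$ yields $\phi\psi\in\I_{g,1}$ with $(\phi\psi)(\sigma)=\sigma'$, completing injectivity. Case $(ii)$ follows the same template after replacing $\Sp(2g,\Z)$ everywhere by $\Sp(2g,\Z)_{b_1}$ and $\G_{g,1}$ by $\G(F_{g,1},F_{g-1,2})$.
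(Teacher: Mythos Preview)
Your approach is correct and matches the paper's: both define the same closing-up map $h\colon (\al_0,\ldots,\al_n)\mapsto ([\tilde\al_0],\ldots,[\tilde\al_n])$ and assert that the induced map on the quotient is a bijection. The difference is only in the level of detail: the paper does not argue surjectivity or injectivity itself, but simply cites \cite{Berg}, Prop.~2.5.3 (who proved the analogous statement for $C_*(F_{g,r};i)/\I_{g,r}$, from which one restricts to $B_*$ by imposing permutation $=\id$), noting also \cite{Putman2}, Lemma~6.9 as the parallel result for simple closed curves. Your sketch is essentially a reconstruction of what those references do, so there is nothing to correct; your injectivity step (correcting $\phi$ by a $\psi$ supported on the cut surface so that $\phi\psi\in\I_{g,1}$) is exactly the mechanism underlying Berg's and Putman's arguments.
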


\begin{proof}Our original inspiration for this result and its proof was  \cite{Putman2} Lemma 6.9, where he showed that the quotient of the complex of simple closed curves in $F_{g,r}$ by the Torelli group, $D_*(F_{g,r})/\I_{g,r}$, is isomorphic to the simplicial complex $\mathcal{L}(g)$ of lax isotropic bases of $H_1(F_{g,r})$. But we have later discovered that \cite{Berg} in her ph.d. thesis in 2003, Prop. 2.5.3, has proven the result for $C_*(F_{g,r};i)/\I_{g,r}$. We can deduce the Proposition by restricting to $B_*(F_{g+1-i,i};i)$, i.e. requiring that the permutation $=\id$.
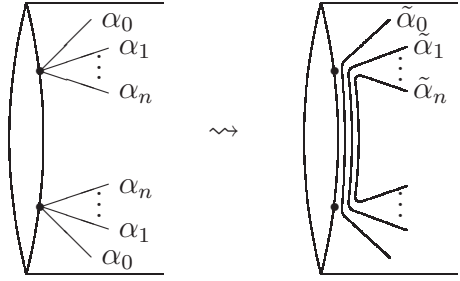
\begin{figure}[!h]
\begin{center}
\setlength{\unitlength}{0.45cm}
\begin{picture}(4,8)(0,-4)
\qbezier(0,4)(-1,0)(0,-4)\qbezier(0,4)(1,0)(0,-4)
\put(0,4){\line(1,0){4}}\put(0,-4){\line(1,0){4}}
\put(0.4,2){\circle*{0.2}}\put(0.4,-2){\circle*{0.2}}
\put(0.4,2){\line(1,1){1.5}}\put(0.4,2){\line(3,-1){2}}\put(0.4,2){\line(3,1){2}}
\put(0.4,-2){\line(1,-1){1.5}}\put(0.4,-2){\line(3,1){2}}\put(0.4,-2){\line(3,-1){2}}
\put(2,1.75){$\vdots$}\put(2,-2.3){$\vdots$}
\put(2.2,3.3){$\al_0$}\put(2.7,2.5){$\al_1$}\put(2.7,1.2){$\al_n$}
\put(2.2,-3.7){$\al_0$}\put(2.7,-2.9){$\al_1$}\put(2.7,-1.6){$\al_n$}
\end{picture}
\begin{picture}(4,8)(0,-4)
\put(1,0){$\rightsquigarrow$}
\end{picture}
\begin{picture}(4,8)(0,-4)
\qbezier(0,4)(-1,0)(0,-4)\qbezier(0,4)(1,0)(0,-4)
\put(0,4){\line(1,0){4}}\put(0,-4){\line(1,0){4}}
\put(0.4,2){\circle*{0.2}}\put(0.4,-2){\circle*{0.2}}
\qbezier(0.6,2.1)(0.63,2.26)(0.7,2.3)
\qbezier(0.7,2.3)(2,3.5)(2,3.5)
\qbezier(0.6,2.1)(0.8,0)(0.6,-2.1)
\qbezier(0.6,-2.1)(0.63,-2.26)(0.7,-2.3)
\qbezier(0.7,-2.3)(2,-3.5)(2,-3.5)
\qbezier(0.9,2.1)(2.5,2.7)(2.5,2.7)
\qbezier(0.9,2.1)(0.82,2.06)(0.8,1.9)
\qbezier(0.8,1.9)(1,0)(0.8,-1.9)
\qbezier(0.9,-2.1)(2.5,-2.7)(2.5,-2.7)
\qbezier(0.9,-2.1)(0.82,-2.06)(0.8,-1.9)
\qbezier(1.2,1.85)(2.5,1.4)(2.5,1.4)
\qbezier(1.2,1.85)(1,1.9)(1,1.7)
\qbezier(1,1.7)(1.2,0)(1,-1.7)
\qbezier(1.2,-1.85)(2.5,-1.4)(2.5,-1.4)
\qbezier(1.2,-1.85)(1,-1.9)(1,-1.7)
\put(2.2,1.7){$\vdots$}\put(2.2,-2.3){$\vdots$}
\put(2.2,3.3){$\tilde\al_0$}\put(2.7,2.5){$\tilde\al_1$}\put(2.7,1.2){$\tilde\al_n$}
\end{picture}
\caption{Closing up the arcs of a simplex in $B_*(F_{g,1};1)$.}\label{f:closeup}\end{center}\end{figure}

We briefly mention the map that gives the isomorphism. First consider the map $h:B_n(F_{g,1};1)\To \L(g)$ by
\begin{equation*} (\al_0,\al_1,\ldots,\al_n) \mapsto ([\tilde\al_0],[\tilde\al_1],\ldots,[\tilde\al_n])
\end{equation*}
where the simple closed curve $\tilde\al_i$ comes from closing up $\al_i$ as on Figure \ref{f:closeup}, and $[-]$ denotes the homology class. The closing-up is always possible, and gives non-intersecting curves, because the arc simplices have permutation $\id$. 
When we restrict to the subcomplex $B_*(F_{g-1,1};2)$, the target of $h$ is indeed contained in $\Ll(g-1)$; for the close-up $\tilde\g$ of an arc $\g$ in $B_{*}(F_{g,1};1)$ coming from $B_{*}(F_{g,2};2)$ will satisfy $i_{\textup{geom}}(\tilde\g,\be_1)=1$.

Since $\I_{g,1}$ preserves homology classes, $h$ descends to a map on the quotient $\bar h:B_*(F_{g,1};1)/\I_{g,1}\To\L(g)$. Then $\bar h$ is a bijection.
\end{proof}

\subsection{\Multis{} and preliminaries}\label{ss:prelim}
The complexes $\L(g)$ and $\Ll(g)$, along with the other complexes we will define here in section 2, are of the following type, which we call \multis{}, for lack of a better word:

\begin{defn}
A nonempty family $\K$ of finite ordered tuples of a universal set $H$ is called a \multi{} if, 
\begin{itemize}
	\item[$(i)$] for every tuple $\ww\in\K$, and every sub-tuple $\vv$ of $\ww$, we have $\vv\in \K$. (By a sub-tuple of $\ww=(w_0,\ldots,w_n)$ we mean a tuple $(w_{i_0},\ldots,w_{i_j})$ where $0\le i_0<\cdots<i_j\le n$.)
	\item[$(ii)$] whether $\ww\in K$ does not depend on the ordering of the tuple $\ww$.
\end{itemize}
An $(n+1)$-tuple $\ww$ in $\K$ will be called an $n$-simplex.
\end{defn}
\begin{rem}\label{r:join}
	This allows the following combinatorial definitions:
	Let $\vv=(v_0,\ldots,v_n)$ and $\ww=(w_0,\ldots,w_k)$ be simplices in $\K$. The vertex-set of $\ww$ is $V(\ww)=\set{w_0,\ldots,w_n}$. 
	The link of $\vv$, $\link_{\K}(\vv)$, is defined to be the set of all simplices $\ww\in\K$ such that $(v_0,\ldots,v_n,w_0,\ldots,w_k)$ is a simplex in $\K$.
	For $\ww\in\link_{\K}(\vv)$, we say a simplex $\uu$ is a join of $\vv$ and $\ww$, if $V(\uu)=\set{v_0,\ldots,v_n, w_0,\ldots,w_k}$. We will write, by slight abuse of notation, $\uu=\vv*\ww$ to mean that $\uu$ is a join of $\vv$ and $\ww$. 
	Note, $\link_{\K}(\vv*\ww)$ is unambiguous.
\end{rem}
 
To say a \multi{} $\K$ is $d$-connected means that the geometric realization, $\abs{\K}$, is $d$-connected. The standard proof for simplicial approximation works equally well to show that a map $f:\abs{S}\To \abs{\K}$, where $S$ is a simplicial complex, is homotopic to a simplicial map $g:\abs{S}\To \abs{\K}$, that is, $g$ is determined by the map on the underlying complexes, $\tilde g:S\To \K$. Thus, to show $\K$ is $d$-connected, it suffices to show that for a given simplicial map $f: S^n\To \K$ where $S^n$ is a simplicial $n$-sphere, there exists a simplicial $n$-ball, $B$, with $\dd B=S^n$, and a simplicial map $\f:B\To \K$ with $\f|_{\dd B}= f$. We now introduce some techniques we will apply to show such connectivity.

\begin{rem}\label{r:order}Let $S$ be a simplicial complex and $\K$ a \multi. Suppose $V(S)=A\sqcup B$, and we have given simplicial maps $f:S\cap A\To \K$ and $g:S\cap B\To \K$. On vertices $s\in S$, set
\begin{equation}\label{e:map}
    F(s)=\left\{
       \begin{array}{ll}
         f(s), & \hbox{if $s\in A$;} \\
         g(s), & \hbox{if $s\in B$.}
       \end{array}
     \right.
\end{equation}
If \eqref{e:map} defines a simplicial map $F:S\To \overline{\K}$, where $\overline{\K}$ is the simplicial complex underlying $\K$ (given by forgetting the ordering), then \eqref{e:map} also defines a simplicial map $F:S\To \K$, using the orderings provided by $f$ and $g$, and taking $A$ before $B$.
\end{rem}
\begin{defn}[Link move]\label{d:link} Let $S$ be a simplicial complex, $\K$ a \multi, and $f:S\To \K$ a simplicial map. Let $\s\in S$ be a simplex, and suppose we have a simplicial ball $B$ with $\dd B=\link(\s)$ and a simplicial map $\f:B\To \K$ with $\f|_{\dd B}=f|_{\link(\s)}$.

We have $\dd(\star(\s))= \link(\s)*\dd\s= \dd(B*\dd\s)$.
 Now replace $S$ by the simplicial $n$-manifold $S'=(S\fra \star(\s))\cup_{\link(\s)*\dd\s}B*\dd\s$. Also, replace $f$ by the simplicial map $f':S'\To \K$, which on a vertex $s\in S$ is given by
\begin{equation*}
    f'(s)=\left\{
             \begin{array}{ll}
              \f(s), & \hbox{if $\s\in B$;} \\
               f(s), & \hbox{if $\s\notin B$.}
             \end{array}
           \right.
\end{equation*}
and the ordering is given by $V(S')=V(B\fra \dd B)\sqcup V(S')\fra V(B\fra \dd B)$, as in Remark \ref{r:order}. Since both $\star(\s)$ and $B*\dd\s$ are faces of $B*\s$, there is a homotopy on on the geometric realizations from $\abs{f}$ to $\abs{f'}$. We call $f'$ the result of performing a link move to $f$ on $\s$ with $\f$.
\end{defn}

Now we return to $\L(g)$ and $\Ll(g)$. We define a crucial concept, which we call $\gcd$, that captures the essence of being a simplex in $\L(g)$.

\begin{defn}\label{d:ssc}For $A\del H$, we define $S(A)$, the smallest summand containing $A$, to be the summand $S(A)=\set{x\in H|\exists n\in \Z\fra \set{0}: nx\in \ideal{A}}$.
\end{defn}
\begin{defn}[$\gcd$]\label{d:gcd}Let $H$ be a free $\Z$ module, and $v_1,\ldots,v_n\in H$. We define $\gcd(v_1,\ldots,v_n)$ as follows. Let $V=\ideal{v_1,\ldots,v_n}$. If $\rank(V)<n$, then $\gcd(v_1,\ldots,v_n):=0$. If $\rank(V)=n$, let $S=S(V)$, and take a basis $(s_1,\ldots,s_n)$ of $S$. Let $A$ be the $n\times n$ matrix whose $i$th column is the coordinate vector of $v_i$ in the basis $(s_1,\ldots,s_n)$. Then $\gcd(v_1,\ldots, v_n)=\abs{\det(A)}$.

For a submodule $W\subset H$, we write $\gcd(W)$ for $\gcd(w_1,\ldots, w_k)$, where $w_1,\ldots, w_k$ is any basis of $W$.
\end{defn}
For a single vector $v$, $\gcd(v)$ is the greatest common divisor of the coefficients when writing $v$ in a basis for $H$, hence the name.

\begin{rem}\label{r:gcd2}Let $\vv=(v_0,\ldots,v_n)$, with $v_i\in H=H_1(F_{g,1};\Z)$. Then $\vv\in \L(g)$ if and only if $(v_0,\ldots,v_n)$ is isotropic, and $\gcd(v_0,\ldots,v_n)=1$.
\end{rem}

\begin{rem}\label{r:gcd} Use the partial order $\le_{\textup{div}}$ on $\N$, where $m\le_{\textup{div}}n$ means $m|n$. In particular $n\le_{\textup{div}} 0$ for all $n\in\N$. For two sets of vectors $V=\set{v_1,\ldots,v_n}$ and $W=\set{w_1,\ldots, w_m}$, we have $\gcd(V,W)\gediv \gcd(V)\gcd(W)$.
\end{rem}

\begin{rem}\label{r:gcdlig}If $H=A\oplus B$, and $A'\del A$, $B'\del B$ are subsets, then
\begin{equation*}
    \gcd(A',B')=\gcd(A')\cdot \gcd(B').
\end{equation*}
\end{rem}

In section \ref{ss:conntraels} we need the existence of dual vectors, as follows:
\begin{prop}\label{p:dual}Let $H$ be a free $\Z$-module of rank $2g$ with a symplectic form $\ialg{\cdot,\cdot}$. Given $n\le g$, and $v_1,\ldots,v_n\in H$, assume $\gcd(v_1,\ldots,v_n)=1$, and set $S=\ideal{v_1,\ldots,v_n}$.
\begin{itemize}
\item[$(i)$]There exists a \emph{dual summand} $D=D(v_1,\ldots,v_n)$ to $S$, meaning $D$ is an isotropic summand of rank $n$, and $D$ has a basis $u_1,\ldots,u_n$ satisfying
    \begin{equation*}
    \ialg{v_i,u_j}=\de_{ij}.
    \end{equation*}
    In particular, $S\oplus D$ is a symplectic summand, so there exists a unique summand $T\del H$, such that $H=(S\oplus D)\oplus T$ is a symplectic splitting.
\item[$(ii)$]Let $k\le g-n$. Let $D=D(v_1,\ldots,v_n)$ as in $(i)$ be given. Given vectors $w_1,\ldots,w_k$ with $\gcd(w_1,\ldots,w_k,S)=1$, there exists a dual summand $D_2$ of $S_2=\ideal{v_1,\ldots,v_n,w_1,\ldots,w_k}$ such that $S\oplus D\del S_2\oplus D_2$.
\item[$(iii)$]Let $k\le g-n$. Let $D=D(v_1,\ldots,v_n)$ and $T$ as in $(i)$ be given. Given vectors $w_1,\ldots,w_k$ with $\gcd(w_1,\ldots,w_k,S,D)=1$, there exist a dual summand $D(w_1,\ldots, w_k)\del T$.
\item[$(iv)$]Let $k\le g-n-m$. Let $S_1,D_1,T_1$ as in $(i)$, and let $S_2=\ideal{v_1,\ldots,v_n}$ with dual summand $D_2\del T_1$, also as in $(i)$. Then given $w_1,\ldots,w_k$ with $\gcd(w_1,\ldots,w_k,S_1,D_1,S_2)=1$, there exists a dual summand $D_3= D(v_1,\ldots,v_n,w_1,\ldots,w_k)\del T_1$ with $S_2\oplus D_2\del S_1 \oplus D_1 \oplus S_3\oplus D_3$.
\end{itemize}
\end{prop}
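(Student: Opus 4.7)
My plan is to prove \emph{(i)} first by induction on $n$, then reduce each of \emph{(ii)--(iv)} to a single application of \emph{(i)} inside a smaller symplectic summand of $H$. Throughout I assume the ambient tuple $(v_1,\ldots,v_n,w_1,\ldots,w_k)$ is isotropic---this is implicit in the context of $\L(g)$ (where simplices are ordered isotropic bases), and is in fact needed for a dual summand to exist, as small $\Z^4$ examples show.

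For \emph{(i)}, the assumption $\gcd(v_1,\ldots,v_n)=1$ already forces $S=\ideal{v_1,\ldots,v_n}$ to be a summand of $H$. In the base case $n=1$, the homomorphism $\ialg{v_1,-}\colon H\to\Z$ has image $\gcd(v_1)\cdot\Z=\Z$, hence is surjective; pick any $u_1$ with $\ialg{v_1,u_1}=1$, which makes $\ideal{v_1,u_1}$ a rank-$2$ symplectic summand. For the inductive step, produce $u_1$ from the base case and replace $v_j$ ($j\ge 2$) by $v_j':=v_j-\ialg{v_j,u_1}v_1$. Isotropy of the $v_i$'s places each $v_j'$ in the symplectic complement $H':=\ideal{v_1,u_1}^{\perp}$, a symplectic summand of rank $2g-2$. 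Remark \ref{r:gcdlig} for $H=\ideal{v_1,u_1}\oplus H'$ reduces the hypothesis to $\gcd(v_2',\ldots,v_n')=1$ inside $H'$, and the inductive hypothesis supplies $u_2,\ldots,u_n\in H'$; the required equalities $\ialg{v_i,u_j}=\de_{ij}$ and the isotropy of $D=\ideal{u_1,\ldots,u_n}$ then follow from the symplectic orthogonality $\ideal{v_1,u_1}\perp H'$. Taking $T$ to be the symplectic complement of $S\oplus D$ completes \emph{(i)}.

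For \emph{(ii)}, consider the splitting $H=S\oplus D\oplus T$ from \emph{(i)} and project each $w_j$ to $w_j':=\pi_T(w_j)\in T$. Isotropy forces $w_j$ to have no $D$-component, since the $u_l$-coordinate of that component equals $\ialg{v_l,w_j}=0$; thus $w_j=s_j+w_j'$ with $s_j\in S$. Replacing $w_j$ by $w_j'$ does not change the spanned module modulo $S$, so Remark \ref{r:gcdlig} applied to $H=S\oplus(D\oplus T)$ gives $\gcd(w_1',\ldots,w_k')=\gcd(w_1,\ldots,w_k,S)=1$, and this $\gcd$ may be computed equivalently in $T$ or in $D\oplus T$ because $T$ is itself a summand of $D\oplus T$. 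Apply \emph{(i)} inside $T$ to obtain $u_{n+1},\ldots,u_{n+k}\in T$ dual to $w_1',\ldots,w_k'$; the orthogonality $T\perp(S\oplus D)$ then yields $\ialg{u_{n+l},w_j}=\ialg{u_{n+l},w_j'}=\de_{lj}$ and $\ialg{u_{n+l},v_i}=\ialg{u_{n+l},u_i}=0$, so $D_2:=D\oplus\ideal{u_{n+1},\ldots,u_{n+k}}$ is the desired extension. Parts \emph{(iii)} and \emph{(iv)} follow the same project-and-invoke-\emph{(i)} recipe inside whatever symplectic complement remains after the current data: in (iii) project the $w_j$ into $T_1$ using $H=(S\oplus D)\oplus T_1$ and apply \emph{(i)} there; in (iv) project further into the symplectic complement of $S_2\oplus D_2$ inside $T_1$ (itself obtained by an application of \emph{(i)} within $T_1$) and apply \emph{(i)} once more.

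The main obstacle is pure bookkeeping: at each reduction one must simultaneously check that (a) the projected tuple remains isotropic, so the next call of \emph{(i)} has legal input, and (b) the relevant $\gcd$-condition descends to the projected vectors, so \emph{(i)} can actually be invoked. Both checks come down to Remark \ref{r:gcdlig} together with the observation that if $T$ is a summand of $D\oplus T$ which is in turn a summand of $H$, then the smallest summand (Definition \ref{d:ssc}) of a subset $W\subseteq T$ agrees in $T$, in $D\oplus T$, and in $H$, so the $\gcd$ of such a subset is the same in all three. Once this compatibility is set up cleanly, each part of the proposition becomes essentially a one-line invocation of \emph{(i)}.
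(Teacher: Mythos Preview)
Your overall strategy---induct for \emph{(i)}, then project into the complement and re-apply \emph{(i)} for \emph{(ii)--(iv)}---is exactly the paper's strategy. But there is a genuine gap in your inductive step for \emph{(i)}: the claim that ``the required equalities $\ialg{v_i,u_j}=\de_{ij}$ \ldots\ follow from the symplectic orthogonality $\ideal{v_1,u_1}\perp H'$'' is false for $i\ge 2$, $j=1$. You chose $u_1$ only to satisfy $\ialg{v_1,u_1}=1$, so $\ialg{v_i,u_1}$ can be any integer. The paper fixes this by replacing $u_1$ with $\bar u_1=u_1-\sum_{j=2}^n\ialg{v_j,u_1}\,u_j$ after obtaining $u_2,\ldots,u_n$ from the inductive call; you need the same correction (or an equivalent one). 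Without it your $D$ need not satisfy the dual-basis condition and $S\oplus D$ need not be symplectic.

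One point of comparison worth noting: for \emph{(ii)} the paper takes a different route. It first builds an auxiliary rank-$2k$ symplectic summand $W\subseteq T$ containing the projections $\pr_T(w_j)$, and then applies \emph{(i)} to the full tuple $(v_1,\ldots,v_n,w_1,\ldots,w_k)$ inside the rank-$2(n+k)$ summand $S\oplus D\oplus W$; a dimension count then forces $S\oplus D\subseteq S_2\oplus D_2$. Your approach---apply \emph{(i)} directly to $w_1',\ldots,w_k'$ inside $T$ and set $D_2=D\oplus\ideal{u_{n+1},\ldots,u_{n+k}}$---is cleaner and in fact yields the stronger containment $D\subseteq D_2$. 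Just be aware that your $(u_1,\ldots,u_{n+k})$ is dual to the basis $(v_1,\ldots,v_n,w_1',\ldots,w_k')$ of $S_2$, not to $(v_1,\ldots,v_n,w_1,\ldots,w_k)$: you checked $\ialg{u_{n+l},w_j}=\de_{lj}$ and $\ialg{u_{n+l},v_i}=0$, but $\ialg{u_l,w_j}$ (for $l\le n$) equals the $v_l$-coefficient of $w_j$, which need not vanish. This does not affect the conclusion of \emph{(ii)} since a dual summand is only required relative to \emph{some} basis of $S_2$, but it is worth stating explicitly.
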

\begin{proof} $(i):$ We prove this by induction in $n$. It it not hard to see that one can 
%First 
find $u_1\in H$ with $\ialg{v_1,u_1}=1$ and $\gcd(u_1,v_1\ldots,v_n)=1$. 

Now consider $H_1=\ideal{v_1,u_1}^{\perp}$ which gives a symplectic splitting $H=\ideal{v_1,u_1}\oplus H_1$. For $n>1$, let $\tilde v_i=\pr_{H_1}(v_i)$ for $i=2,\ldots,n$. Note that $\gcd(\tilde v_2,\ldots,\tilde v_n)\lediv \gcd(u_1,v_1,v_2,\ldots,v_n)=1$. Then by induction we obtain $u_2,\ldots,u_n$ and $T$ satisfying the desired properties w.r.t. $\tilde v_2,\ldots,\tilde v_n$ in $H_1$. We change $u_1$ to $\overline u_1=u_1-\sum_{j=2}^n c_ju_j$, where $c_j=\ialg{v_i,u_1}$. One checks that $D=\ideal{\streg u_1,u_2,\ldots, u_n}$ is the desired dual summand. 

$(ii):$ We have $S,D,T$ given w.r.t. $v_1,\ldots,v_n$, as in $(i)$. First we claim there is a rank $2k$ symplectic summand $W$ in $T$, such that $S\oplus D\oplus W$ contains $w_1,\ldots,w_k$. To see this, consider the $k$ vectors $\tilde w_j=\pr_{T}(w_j)$, and take the smallest summand $S_W$ containing them. By $(i)$ we obtain $D_W,T_W$, where $S_W\oplus D_W$ is a symplectic summand of rank $\le 2k$. If the rank is $<2k$, add a symplectic summand $R_W\del T_W$, such that $W=S_W\oplus D_W\oplus R_W$ has rank $2k$.
% Now $S\oplus D\oplus W$ contains $w_1,\ldots,w_k$. 
Next, use $(i)$ on the vectors $v_1,\ldots,v_n,w_1,\ldots,w_k$ inside $S\oplus D\oplus W$, yielding a dual summand $S_2=S(v_1,\ldots,v_n,w_1,\ldots, w_k)$, and $T_2$. Note for dimensional reasons $T_2=0$. Thus $S\oplus D\del S\oplus D\oplus W=S_2\oplus D_2$.

$(iii)$ and $(iv)$ follow easily from $(i)$ and $(ii)$ as we now sketch: For $(iii)$, we have $S,D,T$ as in $(i)$. Use $(i)$ on $\tilde w_j=\pr_T(w_j)$ for $j=1,\ldots, k$, obtaining $D(\tilde w_1, \ldots, \tilde w_k)\del T$. Check this is a dual summand of $(w_1,\ldots,w_k)$. For $(iv)$, 
project $(v_1,\ldots,v_n, w_1,\ldots, w_k)$
on $T_1$; call the result $(\tilde{v}_1,\ldots,\tilde{v}_n,\tilde{w}_1,\ldots, \tilde{w}_k)$. Use $(ii)$ on $(\tilde{w}_1,\ldots, \tilde{w}_k)$, given $\tilde S_2=\ideal{\tilde{v}_1,\ldots,\tilde{v}_n}$ and $D_2$ in $T_1$, to get $D_3=D(\tilde{v}_1,\ldots,\tilde{v}_n,\tilde{w}_1,\ldots, \tilde{w}_k)$ with $\tilde{S}_2 \oplus D_2\del \tilde{S}_3 \oplus D_3\del T_1$. Check $D_3$ works.
\end{proof}

\begin{rem}Let $n\le g$. Given $v_1,\ldots,v_n$ in $H$ with $\gcd(v_1,\ldots,v_n)>0$, let $S=S(v_1,\ldots,v_n)$ denote the smallest summand containing $\ideal{v_1,\ldots,v_n}$. Then we can choose a basis $v_1',\ldots,v_n'$ for $S$ and get a dual summand $D=D(v_1',\ldots,v_n')$. We will call $D$ a dual summand of $S$ (w.r.t $v_1',\ldots,v_n'$).
\end{rem}

\subsection{Connectivity of $B(F_{g,1};1)/\I_{g,1}$}\label{s:con(F,1)}
In this section, $H=H_1(F_{g,1};\Z)$. We prove Theorem \ref{t:intro} for $i=1$; namely, the quotient complex $B(F_{g,1};1)/\I_{g,1}$ is $(g-2)$-connected. By Prop. \ref{p:complex} $(i)$, we must show
\begin{prop}\label{p:conn1a} $\L(g)$ is $(g-2)$-connected.
\end{prop}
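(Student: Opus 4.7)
The plan is to prove that $\L(g)$ is $(g-2)$-connected by induction on $g$. For the base cases, $\L(1)$ is nonempty (hence $(-1)$-connected) since $a_1$ is a $0$-simplex, and $\L(2)$ is path-connected: given any two primitive vectors $v,w \in H$, one connects them by a chain of isotropic pairs, using Prop.~\ref{p:dual}(i) to produce symplectic complements at each step.

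For the inductive step, assume $\L(g-1)$ is $(g-3)$-connected, and let $f: S \To \L(g)$ be a simplicial map from a triangulated $n$-sphere with $n \le g-2$; the goal is to extend $f$ to a simplicial $(n+1)$-ball. The first ingredient is a structural lemma: for any $p$-simplex $\tau = (v_0,\ldots,v_p)$ of $\L(g)$, the link $\link_{\L(g)}(\tau)$ is naturally identified, up to deformation retraction, with $\L(g-p-1)$. Indeed, by Prop.~\ref{p:dual}(i), $\tau$ determines a symplectic splitting $H = \ideal{v_0,\ldots,v_p} \oplus D \oplus T$ with $T$ of rank $2(g-p-1)$, and projecting to $T$ sends simplices of $\link_{\L(g)}(\tau)$ to isotropic summand bases of $T$, i.e.\ to simplices of $\L(g-p-1)$, the matching of $\gcd$ conditions following from Remarks~\ref{r:gcd} and~\ref{r:gcdlig}. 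By the induction hypothesis, this link is $(g-p-3)$-connected.

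The core of the argument is to homotope $f$ into the star $\star_{\L(g)}(v_0)$ of a fixed primitive vector $v_0 \in H$, via a sequence of link moves (Def.~\ref{d:link}). Since $\star(v_0)$ is the simplicial cone on $\link(v_0)$ with apex $v_0$, it is contractible, and once $\im(f)\subseteq\star(v_0)$ the desired filling is immediate by coning. To perform the reduction, I would iterate over vertices $s \in S$ which are \emph{bad} in the sense that $v_0 \notin \star_{\L(g)}(f(s))$. For such an $s$, the link $\link_S(s)$ is an $(n-1)$-sphere whose image under $f$ lies in $\link_{\L(g)}(f(s))$; since the latter is $(g-3)$-connected by induction and $n-1 \le g-3$, a filling disk $B$ exists, and the link move replaces $\star(s)$ by $B * \dd s$, eliminating the bad vertex $s$.

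The main obstacle is to guarantee that the filling disk $B$ has its image inside a \emph{$v_0$-compatible} subcomplex of $\link_{\L(g)}(f(s))$, so that the link move strictly decreases the number of bad vertices rather than reshuffling them. Here a vertex $u \in \link_{\L(g)}(f(s))$ is $v_0$-compatible when $(f(s), u, v_0)$ is itself a simplex of $\L(g)$; under the identification $\link(f(s)) \iso \L(g-1)$ this picks out a subcomplex closely analogous to $\Ll(g-1)$. Showing that this subcomplex is $(g-3)$-connected in the needed range is the real technical heart of the proof, and I would handle it either by reducing to (a strengthening of) the $\Ll$-connectivity needed later, using Prop.~\ref{p:dual}(ii)--(iv) to produce dual summands compatible with $v_0$ and $f(s)$ simultaneously and the $\gcd$ bookkeeping from Definition~\ref{d:gcd}, or by a mutual induction that simultaneously proves $\Ll(g)$ is $(g-2)$-connected. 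I expect the paper to adopt the mutual induction so that both connectivity results of Section~\ref{ss:conn} come out of a single inductive package.
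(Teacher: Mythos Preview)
Your approach is genuinely different from the paper's, and it has real gaps.

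\textbf{What the paper does.} The paper does \emph{not} argue by induction on $g$ via link identification, and it does \emph{not} do a mutual induction with $\Ll$. Instead it proves a stronger statement (Prop.~\ref{p:conn1}) about the links $\Letdel(g)$ and their subcomplexes $\Letdelx(g)$, by a mutual induction between these two. The mechanism is Putman's rank-reduction argument: fix a symplectic basis containing $x$, set $R=\max_s|\rk^x(f(s))|$, and when $R>0$ perform link moves on maximal ``bad'' simplices (those whose vertices all have $|\rk^x|=R$), using \emph{division with remainder} to produce a filling $\f'$ whose vertices all have strictly smaller $|\rk^x|$. The delicate part (which the paper says repairs a gap in Putman's original argument) is that the auxiliary vector $x$ need not lie in $\Letblank{\D^k*f(\s)}(g)$; the paper handles this by a case split on $\gcd(x,f(\s),\D^k)$ and, in the degenerate case, a projection $\pr:\xort\to\ideal{x,y}^\perp$ that reduces to $\Letblank{\ww}(g-1)$. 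Connectivity of $\L(g)$ is then the case $\D^k=\emptyset$. Connectivity of $\Ll(g)$ is proved \emph{afterwards}, using $\L$-connectivity as input (Lemma~\ref{l:gcd0conn}); the dependence is one-directional, not mutual.

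\textbf{Gaps in your sketch.} First, the claim that $\link_{\L(g)}(\tau)$ deformation retracts to $\L(g-p-1)$ is not justified: projection to $T$ does define a simplicial map, but you have not produced a homotopy inverse, and the fibres are infinite (parametrised by the $S$-components). The paper uses exactly such a projection only in one special case, and there it is used to fill a link, not to identify the whole link with $\L(g-1)$. Second, doing link moves on bad \emph{vertices} fails: the boundary $\link_S(s)$ may itself contain bad vertices, so $f(\link_S(s))$ need not land in your $v_0$-compatible subcomplex, and you cannot fill there. The standard fix is to take maximal bad \emph{simplices}, but then for a bad simplex $\sigma$ with $f(\sigma)\not\perp v_0$, no vertex of $\link_{\L(g)}(f(\sigma))$ can be $v_0$-compatible in your sense, since $(f(\sigma),u,v_0)$ is never isotropic. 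This is precisely why the paper uses a \emph{numerical} invariant $|\rk^x|$ that can be strictly decreased, rather than a binary good/bad dichotomy relative to a fixed $v_0$. Third, your expectation that the paper runs a mutual induction with $\Ll$ is incorrect; in fact $\Ll$-connectivity relies on $\L$-connectivity (via Lemma~\ref{l:gcd0conn}), so invoking $\Ll$ here would be circular.
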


This follows from Prop. \ref{p:conn1} by taking $\D^k=\Ø$ in $(ii)$. First we define:
\begin{defn}Let $\D^k\in \L(g)$ be a (possibly empty) simplex, and let $W\del H$ be a $\Z$-linear subspace. Write $\Letdel(g)=\link_{\L(g)}(\D^k)$, and define $\Letblank{\D^k;W}(g)$ to be the the subcomplex of $\Letdel(g)$ consisting of the simplices whose vertices are in $W$.
\end{defn}
The proof of the following proposition is modeled on \cite{Putman2} Prop. 6.13, but his argument has a gap, which we repair.
\begin{prop}\label{p:conn1} For $g\ge 1$, fix $-1 \le k< g$. Let $\D^k$ be a $k$-simplex in $\L(g)$. Then for all vectors $x\in \Letdel(g)$, the following hold.
\begin{itemize}
  \item[$(i)$] For $-1 \le n \le g-k-3$, we have $\pi_n(\Letdelx(g)) = 0$.
  \item[$(ii)$] For $-1 \le n \le g-k-3$, we have $\pi_n(\L^{\D^k}(g)) = 0$.
\end{itemize}
\end{prop}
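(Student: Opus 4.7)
The plan is to prove $(i)$ and $(ii)$ simultaneously by downward induction on $k$, equivalently induction on $g-k\ge 0$. The base case $g-k=2$ requires only nonemptiness (since the bound gives $n\le -1$): Proposition \ref{p:dual}$(i)$ applied to $\D^k$ supplies a dual summand $D$ together with a complementary symplectic summand $T$ of rank $2(g-k-1)=2$, in which one picks a primitive isotropic vector — and for part $(i)$ one picks it in $T\cap\ideal{x}^\perp$, which is nonempty since $\pr_T(x)$ spans at most a rank-one subspace of the symplectic $T$.

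For the inductive step I would adopt the strategy of \cite{Putman2}, Prop.\ 6.13. First I would derive $(ii)$ at $(g,k)$ from $(i)$ at $(g,k+1)$. Given $f\colon S^n\to\Letdel(g)$ with $n\le g-k-3$, pick an auxiliary vertex $x\in\Letdel(g)$ via Proposition \ref{p:dual}$(i)$, with the aim of coning $f$ off along $x$; this requires every vertex of $f(S^n)$ to lie in $\ideal{x}^\perp$. When this fails, I would eliminate the offending vertices one at a time by link moves (Definition \ref{d:link}): for a vertex $v\in S^n$ with $f(v)\notin\ideal{x}^\perp$, the restriction $f|_{\link(v)}$ maps into $\Letblankx{\D^k*f(v)}(g)$; by $(i)$ at the $(k+1)$-simplex $\D^k*f(v)$, this target is $(g-k-4)$-connected, which is exactly enough to fill the $(n-1)$-sphere $\link(v)$, since $n-1\le g-k-4$ is equivalent to $n\le g-k-3$. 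The link move replaces the star of $v$ with a ball whose interior vertices all lie in $\ideal{x}^\perp$, strictly reducing the number of bad vertices; once none remain, cone off along $x$.

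For $(i)$, proceed analogously. Given $f\colon S^n\to\Letdelx(g)$, use Proposition \ref{p:dual}$(iii)$ to pick $x'$ so that $(\D^k,x,x')$ is a simplex of $\L(g)$, and use $x'$ as a cone point inside $\Letdelx(g)$. Iterated link moves, with fillings supplied by the inductive hypothesis at larger simplices, push the sphere into the subcomplex where every vertex is compatible with $x'$ (i.e.\ orthogonal to $x'$ and forming a gcd-$1$ simplex with $\D^k,x,x'$); the resulting map is then coned off along $x'$.

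The main obstacle, and the precise point at which the argument of \cite{Putman2} has a gap, is the monotone growth of $\gcd$ under adjoining a vector (Remark \ref{r:gcd}): $\gcd(v,\D^k)=1$ does not imply $\gcd(v,\D^k,x')=1$, so a vertex $v$ legal for $\Letdelx(g)$ may fail to extend to a simplex of $\L(g)$ together with $x'$. The naive link-move procedure for $(i)$ therefore produces fillings whose vertices need not be $x'$-compatible, and the same difficulty threatens the bookkeeping in $(ii)$ whenever several auxiliary vectors interact. The repair is to exploit Proposition \ref{p:dual}$(iv)$, which provides the flexibility to re-select the auxiliary vector within its complementary summand $T_1$ as new vertices are introduced by the link moves, so that $\gcd=1$ is preserved for every newly formed simplex. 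Verifying that these compatible adjustments can be made consistently, without exhausting the connectivity budget of the inductive hypothesis, is the technical crux I would expect to occupy the bulk of the remainder of the section.
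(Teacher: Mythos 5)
Your proposal diverges from the paper's actual argument in several structural ways, and I believe it contains a genuine gap that the paper's method is specifically engineered to avoid.

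The paper inducts not on $g-k$, but on the dimension $n$ of the sphere being filled, with the inductive hypothesis ranging over all $\D^{k'}$ (and, in a key step, over smaller genus). More importantly, the engine driving the inductive step in $(i)$ is not the elimination of vertices incompatible with a cone point $x'$, but \emph{division with remainder}: the paper fixes a symplectic basis extending $(\D^k,x)$, sets $R=\max_s\abs{\rk^x(f(s))}$, and shows $R$ can be strictly decreased. A ``regular bad'' simplex is one with $\abs{\rk^x}=R$ on all vertices; after filling its link, the paper replaces $\f(s)$ by $\f(s)-q_sv$ to lower $\rk^x$ before performing the link move. The monovariant $R$ is what makes the recursion terminate.

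Your proposal instead tries to use the count of ``bad vertices'' as the monovariant. This fails: when you fill $\link(v)$ inside $\Letblankx{\D^k*f(v)}(g)$ via the inductive hypothesis, the new interior vertices of the filling ball are in $\xort$ and compatible with $\D^k*f(v)$, but there is no reason they should be orthogonal to, or form a $\gcd=1$ simplex with, your chosen cone point $x'$. So the link move can freely introduce new bad vertices, and ``strictly reducing the number of bad vertices'' is not achieved. You correctly sense a difficulty at the end of your proposal and label it the ``technical crux,'' but the hinted repair --- adaptively re-selecting $x'$ within $T_1$ via Proposition~\ref{p:dual}$(iv)$ --- is not what the paper does, and it is not clear it can be made to converge, since each re-selection can invalidate vertices that were previously good.

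The paper's repair of Putman's gap is concrete and different from yours. After reducing to a maximal regular bad simplex $\s$, the paper needs a filling in $\Letblankx{\D^k*f(\s)}(g)$, but $x$ may not be compatible with $\D^k*f(\s)$. It splits into two cases: if $\gcd(x,f(\s),\D^k)>1$, replace $x$ by a primitive $\tilde x$ spanning the same line modulo $\ideal{f(\s),\D^k}$, for which $\xort=\ideal{\tilde x}^\perp$ and $\tilde x$ is compatible; if $\gcd(x,f(\s),\D^k)=0$, then $x\in\ideal{f(\s),\D^k}$, and the paper projects along $\ideal{x,y}$ to a genus $g-1$ problem, invoking part $(ii)$ at lower genus. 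Your plan neither distinguishes these two cases nor ever drops to smaller genus, so it cannot reproduce the second case. Finally, note also that the paper's $(ii)$ is not derived from $(i)$ at $k+1$ via coning, but by the same $R$-reduction mechanism applied to $R_y$; once $R_y=0$ one lands in the setting of $(i)$ at the \emph{same} $k$.
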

\begin{proof}We first prove $(i)$. Assume inductively that $\pi_{n'}(\Letblankx{\D^{k'}}(g)) = 0$ and $\pi_{n'}(\L^{\D_{k'}}(g)) = 0$ for all $n'<n$ and all $\D^{k'}$ such that $n'\le g-k'-3$. The case $n=-1$ holds, since $x\in\Letdelx(g)\neq \emptyset$ for $k<g$.

So let $n\ge 0$, and let $S$ be a simplicial $n$-sphere and $f: S \To \Letdelx(g)$ a simplicial map. %By Lemma \ref{l:simpapprox}, it is enough to show that $\f$ is homotopic to a constant map.
Fix a symplectic basis $\mathfrak{X}$ of $H$ extending the isotropic basis $(\D^k ,x)$, use this basis to define $\rk^{x}$ as in Def. \ref{d:rank}, and consider
\begin{equation}\label{e:R}
    R=R_x=\max\set{\abs{\rk^{x}(\f(s))}\mid s\in S^{(0)}}.
\end{equation}

If $R=0$, then $f(S)\del \link_{\Letdelx}(x)$, and we can define a simplicial map $F:B\To \Letdelx(g)$ where $B=S*{+}$, by $F(+)=x$, as in Remark \ref{r:order}.

Now assume that $R>0$ and call $\s\in S$ regular bad if all vertices $s$ of $\s$ satisfy $\abs{\rk^{x}(\f(s))}=R$. Let $\s$ be a regular bad simplex of maximal dimension, say $\dim\s=m$. By maximality of $\s$ we get
\begin{equation}\label{e:inductmap}
    f|_{\link(\s)}:\link_S(\s)\To\Letblankx{\D^k*f(\s)}(g).
\end{equation}
Here, $\link_S(\s)$ is a simplicial $(n-m-1)$-sphere, and the goal is to obtain a simplicial $(n-m)$-ball $B$ with $\dd B=\link_S(\s)$ and a simplicial map 
\begin{equation}\label{e:fgoal}
	\f:B\To \Letblank{\D^k*f(\s);\xort}(g),\quad \text{with }\f|_{\dd B}=f|_{\link(\s)}.
\end{equation}
This follows from the inductive hypothesis if $x\in \Letblank{\D^k*f(\s)}(g)$. But this might not be the case, so assume $x\notin \Letblank{\D^k*f(\s)}(g)$, in other words $\gcd(x,f(\s),\D^k)\neq 1$. (This is what is missing in Putman's argument). 
 
There are two possibilities. The first is $\gcd(x,f(\s),\D^k)>1$. In this case, the smallest summand $V$ containing $\ideal{x,f(\s),\D^k}$ has rank $1+(\dim f(\s)+1)+(k+1)$, and so we can choose a basis for $V$ of the form $\set{\tilde x, f(\s),\D^k}$. Since $V$ is isotropic, we get
\begin{equation*}
    \Letblankx{\D^k*f(\s)}(g)=\Letblank{\D^k*f(\s);\ideal{\tilde x}^{\perp}}(g).
\end{equation*}
Now by construction, $\tilde x\in \Letblank{\D^k*f(\s)}(g)$, so we get \eqref{e:fgoal} by induction.

The second possibility is $\gcd(x,f(\s),\D^k)= 0$. Then $V=\ideal{f(\s),\D^k}$ is a summand, and $x\in V$. Choose a basis of $V$ extending $x$, i.e. $\set{x,b_0,\ldots,b_\ell}$, such that $\rk^x(b_i)=0$. Note $\ell=\dim(f(\s))+k$, and $\ww=(b_0,\ldots,b_\ell)\in \L(g)$. Then
\begin{equation}\label{e:linklig}
   \Letblankx{\D^k*f(\s)}(g)=\Letblankx{x*\ww}(g)
\end{equation}
Let $y$ denote the basis vector in $\mathfrak{X}$ dual to $x$, i.e. $\ialg{x,y}=1$. Consider
\begin{equation}\label{e:proj}
    \pr: \xort\To \ideal{x,y}^{\perp},\qquad \pr(h)=h-\ialg{h,y}x.
\end{equation}
This can be extended to a map on simplices, which we call $\pr$ again, by using $\pr$ on each vertex. Then for $\vv\in \Letblank{x*\ww,\xort}(g)$ we get that $\pr(\vv)\in \Letblank{x*\ww,\ideal{x,y}^{\perp}}(g)$, from \eqref{e:proj}. We can identify $\ideal{x,y}^{\perp}$ with $\ideal{a_1,b_1,\ldots,a_{g-1},b_{g-1}}$, and since $b_0,\ldots,b_\ell \in \ideal{x,y}^{\perp}$, this identification turns $\pr$ into a map
\begin{equation}\label{e:pr}
    \pr: \Letblank{x*\ww,\xort}(g)\To \Letblank{\ww}(g-1).
\end{equation}
We then consider the composition $\pr\circ f|_{\link(\s)}$, and get by induction in $(ii)$ that there is a simplicial ball $B$ with $\dd B=\link(\s)$ and a simplicial map $\tilde\f$ such that the left-hand square commutes in the following diagram 
\begin{equation}\label{e:phi}
\xymatrix{
	\link(\s)\ar[r]^{f\quad}\ar[d] & \Letblank{\D^k*f(\s);\xort}(g)\ar@{=}[r] & \Letblank{x*\ww;\xort}(g)\ar[dl]_{\pr} \\
	B\ar[r]^{\tilde \f\quad} & \Letblank{\ww}(g-1)\ar[r]^{\psi}_{\iso} & \Letblank{x*\ww; \ortZ{x,y}}\ar[u]_j
	}
\end{equation}
Here, $j$ is induced by the subspace inclusion $\xyort\into H$, and $\pr\circ j\circ \psi=\id$. We modify $\tilde\f$ to a map $\f: B\To \Letblank{\D^k*f(\s)}(g)$ satisfying \eqref{e:fgoal} via 
	\[
	\f(s)=\left\{
      \begin{array}{ll}
        f(s)   & \hbox{if }s\in \link(\s), \\
   j\circ\psi\circ \tilde \f(s)& \hbox{if }s\in B\fra \dd B.
      \end{array}
    \right.
\]
and Remark \ref{r:order}. (To show $\f$ is well-defined, use $\tilde\f=\pr\circ \f$.)

This shows we have $\f$ as in \eqref{e:fgoal}. We now modify $\f$ to a map $\f'$ by performing division with remainder, as in \cite{Putman2}: Let $t\in \s$ be fixed, set $v=\f(t)$. By division we obtain $q_s\in\Z$ such that $\abs{\rk^x(\f(s)-q_sv)}<\abs{\rk^x(v)}=R$ for all $s\in B^{(0)}$. For $s\in\dd B=\link(\s)$ we take $q_s=0$. We then set $\f'(s)=\f(s)-q_sv$ for $s\in B^{(0)}$. By Remark \ref{r:order} we get a simplicial map $\f':B\To \Letblank{\D^k*f(\s),\xort}(g)$ with $\abs{\rk^x(\f'(s))}<R$ for all $s\in B^{(0)}$. Then we do a link move to $f$ on $\s$ with $\f'$ (see Def. \ref{d:link}), which produces a map homotopic to $f$, removing $\s$. Continuing this process inductively in the maximal dimension of regular bad simplices, we can obtain $R=0$, so we are done.

We next prove $(ii)$. This is done in a similar manner, but instead of $R_x$ we use $R_y$, where again $y$ is the dual basis vector to $x$. For $R_y=0$ we are in case $(i)$ and we are done. For $R_y>0$, we remove bad simplices precisely as above, which is easier since the analogue of \eqref{e:inductmap} now directly implies \eqref{e:fgoal}.
\end{proof}

\subsection{Connectivity of $B(F_{g,2};2)/\I_{g,2}$, first part}\label{ss:conn2}
In this section, $H=H(g+1)=H_1(F_{g+1,1};\Z)$. We prove Theorem \ref{t:intro} for $i=2$: The quotient complex $B(F_{g,2};2)/\I_{g,2}$ is $(g-2)$-connected. By Prop. \ref{p:complex} $(ii)$, to prove this we must show:
\begin{thm}\label{p:conn2}$\Ll(g)$ is $(g-2)$-connected.
\end{thm}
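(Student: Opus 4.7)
The plan is to mimic the double-inductive argument of Proposition \ref{p:conn1}, but working inside $\Ll(g)\subseteq \L(g+1)$ where the additional affine constraint $\rk^{a_1}(x)=1$ must be preserved at every step. The key structural observation is that $a_1$ itself is a $0$-simplex of $\Ll(g)$, and its link $\link_{\Ll(g)}(a_1)$ --- consisting of vertices of the form $a_1+y$ with $y\in\ortZ{a_1,b_1}$ primitive --- is canonically isomorphic to $\L(g)$ on the symplectic sub-lattice $\ortZ{a_1,b_1}\iso H(g)$ via $x\mapsto x-a_1$. A direct check shows that $(a_1,x_1,\ldots,x_n)$ is an isotropic summand iff $(x_1-a_1,\ldots,x_n-a_1)$ is one inside $\ortZ{a_1,b_1}$. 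By Proposition \ref{p:conn1a}, $\link_{\Ll(g)}(a_1)$ is thus $(g-2)$-connected, and it suffices to show that every simplicial map $f:S^n\to\Ll(g)$ with $n\le g-2$ can be homotoped into this link; coning with $a_1$ then yields the required filling.

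To run the induction I introduce the auxiliary complexes anticipated by the preamble: the links $\Ldel(g)$, the subspace restrictions $\Lblank{\D^k;W}(g)$, and their summand-spanning refinements $\Lgode(g)$, $\Ldelgode(g)$, $\Ldelgodeb(g)$ in which the vertex set together with $\D^k$ is required to satisfy $\gcd=1$. The claim to prove by double induction on $g$ and $n$ (nested with an induction on the integer
\begin{equation*}
    R_{b_1}(f) = \max\set{\abs{\rk^{b_1}(f(s))}\mid s\in S^{(0)}},
\end{equation*}
exactly as in Proposition \ref{p:conn1}) is that these auxiliary complexes have vanishing $\pi_n$ in the expected range. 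The base case $R_{b_1}(f)=0$ forces every image vertex to lie in $\link_{\Ll(g)}(a_1)$ and is disposed of by the coning argument above.

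The inductive step proceeds by picking a maximal ``regular bad'' simplex $\sigma$ with $\abs{\rk^{b_1}(f(s))}=R_{b_1}$ at every vertex, filling $\link_S(\sigma)$ by an $(n-\dim\sigma)$-ball $\varphi:B\to\Ll(g)$ whose interior vertices have strictly smaller $\abs{\rk^{b_1}}$, and then performing a link move (Definition \ref{d:link}). The principal obstacle, and the reason the proof is split into a ``first'' and a subsequent part, is that Putman's division-with-remainder step from Proposition \ref{p:conn1} --- replacing $\varphi(s)$ by $\varphi(s)-q_sv$ for some $v\in f(\sigma)$ --- directly violates the $\rk^{a_1}=1$ constraint, since $\rk^{a_1}(v)=1$. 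The natural substitute is to use $w=v-v'$ for two vertices of $\sigma$ of opposite $\rk^{b_1}$-sign, producing a reducer with $\rk^{a_1}(w)=0$ and $\rk^{b_1}(w)=\pm 2R_{b_1}$; isotropy of $\varphi(s)-q_sw$ against the rest of the filling is preserved because $\langle w,\varphi(s)\rangle=0$ follows from $v,v'\in f(\sigma)$ and $\varphi(s)\in\link(\D^k\ast f(\sigma))$. When no pair of opposite signs is available one first projects to $\ortZ{a_1,b_1}$ via $h\mapsto h-\rk^{a_1}(h)a_1-\rk^{b_1}(h)b_1$, invokes Proposition \ref{p:conn1} in the smaller lattice, and lifts back through the affine section $y\mapsto a_1+y$.

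The remaining technical weight, and the analogue of Putman's repaired gap, is the degenerate case where $a_1$ or $b_1$ already lies in the summand spanned by $\D^k\ast f(\sigma)$: here neither the dual splitting nor the projection is available as stated, and one must first change basis to a substitute dual vector $\tilde a_1$ inside the isotropic summand before projecting. Proposition \ref{p:dual} supplies the necessary dual summands provided one works within the $\gcd=1$ complexes $\Lgode$, $\Ldelgode$, $\Ldelgodeb$. Establishing the connectivity of these summand-constrained complexes, propagating the $\gcd=1$ condition through link moves, and tracking the boundary marker $t$ appearing in $\Ldelgodeb$ is, I expect, the main obstacle of the whole argument, and is presumably the reason the proof of Theorem \ref{p:conn2} spans two sections rather than one.
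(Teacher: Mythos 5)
Your coning observation---that $\link_{\Ll(g)}(a_1)\iso\L(g)$ via $x\mapsto x-a_1$, so it would suffice to homotope any map into that link---is correct and is the same geometric core that appears in the paper as the complex $\Ldelgodeb(g)$ with $t=0$. You also correctly identify that naive division-with-remainder destroys the $\rk^{a_1}=1$ constraint. But the concrete reducer you propose does not work, and the reduction scheme you are hazy on is precisely where the paper's content lives.

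The arithmetic problem with $w=v-v'$: if $v,v'\in f(\sigma)$ with $\rk^{b_1}=\pm R$, then $\rk^{b_1}(w)=\pm 2R$, and reducing an integer modulo $2R$ only brings its absolute value into $[0,R]$, never strictly below $R$. So after your division the interior vertices of the filling ball $B$ can still have $\abs{\rk^{b_1}}=R$, and the new simplices $\tau_1*\tau_2$ with $\tau_1\in\dd\sigma$, $\tau_2\in B\fra\dd B$ can again be regular bad of dimension $\ge m$. The induction on $\big(R_{b_1},\ \max\dim\text{ regular bad}\big)$ does not terminate. Your fall-back for the same-sign case---project to $\ortZ{a_1,b_1}$, invoke Prop.~\ref{p:conn1}, lift via $y\mapsto a_1+y$---is also not adequate: that lift lands in $\link_{\Ll(g)}(a_1)$, which in general does not contain $\link_{\Ll(g)}(\D^k*f(\sigma))$ when the vertices of $\D^k*f(\sigma)$ have nonzero $b_1$-coordinate.

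The paper's route is structurally different and avoids dividing on $\rk^{b_1}$ entirely. Proposition~\ref{p:reduce} fixes the $b_1$-coordinate to a single value $t$ and achieves $\gcdto=1$ not by division but by a chain of link moves, feeding on the connectivity of successively smaller complexes $\Ldelpos$, $\Ldelgode$, $\Ldelgodeb$ (with Lemma~\ref{l:gcd0conn} disposing of the $\gcdto=0$ case by showing that there isotropy forces a constant $b_1$-coordinate and the link collapses to a link in $\L(g)$). Lemma~\ref{l:reduce2} then reduces to a complex $\Ndel(g)=\Mdelblank{D(\D)}{0}{\D,\emptyset,\emptyset}(g)$ whose vertices are all orthogonal to a chosen dual summand $D(\D)$. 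Only then (Prop.~\ref{p:putmansver}) is division performed, and on an auxiliary coordinate $\rk^x$ for $x\in T(\D)$, not on $\rk^{b_1}$. The $\rk^{a_1}=1$ constraint is restored by replacing $\f(s)-q_sv$ with $a_1+tb_1+\pr_2(\f(s)-q_sv)+q_su$, where $u\in D(\D)$ comes from Lemma~\ref{r:compensate} and restores orthogonality to $\D$ without disturbing orthogonality to the other vertices---precisely because those vertices are already $\perp D(\D)$. This compensating-vector mechanism, and the reduction scaffolding that makes it available, is the substantive content your proposal defers to a footnote ("establishing the connectivity of these summand-constrained complexes \dots is, I expect, the main obstacle"). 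That expectation is right, but it means the proposal stops short of the proof.
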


\begin{defn}For a vector $v\in H$, let $\pr_2(v)$ denote the projection of $v$ onto the subspace $\ideal{a_2,b_2,\ldots, a_{g+1},b_{g+1}}$. For a simplex $\vv=(v_1,\ldots,v_n)\in\Ll(g)$, let $\pr_2(\vv)=(\pr_2(v_1),\ldots,\pr_2(v_n))$.
\end{defn}

The basic idea behind the proof of connectivity of $\Ll(g)$ is the following: Because the $a_1$-coordinate in a simplex $\vv\in\Ll(g)$ is fixed to be $1$, we cannot manipulate the vectors of $\vv$ as we did by using division with remainder in the proof for $\L(g)$. We take two major steps be able to ignore the $a_1$- and $b_1$-coordinates of $\vv$: In section \ref{ss:conn2}, we reduce to the case where the $b_1$-coordinate is fixed, and the rest, $\pr_2(\vv)$, form a simplex in $\L(g)$. Section \ref{ss:conntraels} is then dedicated to adapting the proof of Prop. \ref{p:conn1} to the new situation.

For a simplex $\vv\in\Ll(g)$, we will often need the projection map $\pr_2$ in connection with $\gcd$ (see Def. \ref{d:gcd}), so we introduce the following notation:
\begin{defn}
$\gcdto(\vv)=\gcd(\pr_2(\vv))$.
\end{defn}
We recall from Remark \ref{r:gcd} if $\vv,\ww\in \Ll(g)$ and $\vv*\ww$ is a simplex, then
\begin{equation}\label{e:gcd}
\gcdto(\vv*\ww)\gediv \gcdto(\vv)\gcdto(\ww),
\end{equation}
From now on, for $\vv\in \Ll(g)$ (sections \ref{ss:conn2}, \ref{ss:conntraels}, and \ref{ss:finishexact2}) we write $S(\vv)=S(\pr_2(\vv))$, see Def. \ref{d:ssc}.
\begin{defn}\label{d:gode}Let $\D^k \in \Ll(g)$. Write $\Ldel(g)=\link_{\Ll(g)}(\D^k)$.
\begin{itemize}
%\item Let $\Ldel(g)=\link_{\Ll(g)}(\D^k)$ be the link of $\D^k$ in $\Ll(g)$.
\item Let $\Ldelpos(g)$ be the subcomplex of $\Ll(g)$ consisting of simplices $\vv$ satisfying $\gcdto(\vv)\neq 0$.
\item Let $\Ldelgode(g)$ be the subcomplex of $\Ldel(g)$ consisting of simplices $\vv$ satisfying $\gcdto(\vv;S(\D^k))=1$.
\item Let $t\in\Z$. Define $\Ldelgodeb(g)$ to be the subcomplex of $\Ldelgode(g)$ consisting of simplices $(v_1,\ldots,v_n)$ where $\rk^{b_1}(v_i)=t$ for all $i$.
\end{itemize}
\end{defn}

\begin{rem}\label{r:gcdnonzero}If $\gcdto(\vv;S(\D^k))=1$, then the inequality \eqref{e:gcd} implies that $\gcdto(\ww; S(\D^j))=1$ for all subsimplices $\D^j\del \D^k$ and all subsimplices $\ww\del \vv$. \end{rem}

We first consider what happens when $\gcdto(\D^k)=0$.

\begin{lem}\label{l:gcd0conn}
Let $\D^k$ be a $k$-simplex in $\Ll(g)$ with $\gcdto(\D^k)=0$. Then $\Ldel(g)$ is $(g-k-2)$-connected.
\end{lem}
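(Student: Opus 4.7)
The plan is to reduce Lemma \ref{l:gcd0conn} to Proposition \ref{p:conn1}$(ii)$ by exhibiting a multi-simplicial isomorphism $\Ldel(g) \iso \Letblank{\D'^{k-1}}(g)$, where $\D'^{k-1}$ is a suitable $(k-1)$-simplex in $\L(g)$ constructed from the structure imposed by $\gcdto(\D^k) = 0$. This suffices, because Proposition \ref{p:conn1}$(ii)$ makes the right-hand side $(g-(k-1)-3) = (g-k-2)$-connected.

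First I would extract from the hypothesis that $\D^k$ has very rigid structure. Write $d_i = a_1 + r_i b_1 + \pr_2(d_i)$ with $r_i := \rk^{b_1}(d_i)$. A primitive relation $\sum c_i \pr_2(d_i)=0$ produces $v_c:=\sum c_i d_i = (\sum c_i)a_1 + (\sum c_i r_i)b_1 \in K := S(\D^k) \cap \ideal{a_1,b_1}$. The isotropy of $\D^k$ gives $\ialg{v_c,d_j}=0$, which simplifies to $(\sum c_i)r_j=\sum c_i r_i$ for every $j$. Since the $d_i$ are linearly independent, $v_c\neq 0$, forcing $\sum c_i \neq 0$ and hence $r_j=r$ constant in $j$. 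Consequently $v_c$ is a nonzero multiple of $a_1+rb_1$, and since $S(\D^k)$ is a summand of $H$, the primitive vector $a_1+rb_1$ itself lies in $S(\D^k)$; so $K=\ideal{a_1+rb_1}$ has rank $1$.

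Next I would show that $\pr_2(S(\D^k))$ is an isotropic rank-$k$ summand of $\pr_2(H)\cong H(g)$. Isotropy is immediate from $\ialg{\pr_2(d_i),\pr_2(d_j)}=\ialg{d_i,d_j}-(r_j-r_i)=0$, and the rank is $k$ by the short exact sequence $0\to K\to S(\D^k)\to\pr_2(S(\D^k))\to 0$. For the summand property, every element of $S(\D^k)$ satisfies $\rk^{b_1} = r\cdot \rk^{a_1}$ (since this holds on the generators $d_i$), so $S(\D^k) \subseteq \ideal{a_1+rb_1}\oplus\pr_2(H)$. As $S(\D^k)$ is a summand of $H$ contained in the summand $\ideal{a_1+rb_1}\oplus\pr_2(H)$, it is a summand of the latter; quotienting by $\ideal{a_1+rb_1}$ exhibits $\pr_2(S(\D^k))$ as a summand of $\pr_2(H)$. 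Fix any ordered basis; the result is a $(k-1)$-simplex $\D'^{k-1}\in\L(g)$.

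Finally I would construct the isomorphism $\Phi:\Ldel(g)\to\Letblank{\D'^{k-1}}(g)$ sending $v \mapsto \pr_2(v)$. For a vertex $v\in\Ldel(g)$, isotropy against $a_1+rb_1\in S(\D^k)$ forces $\rk^{b_1}(v)=r$; combined with $\rk^{a_1}(v)=1$, this gives $v=a_1+rb_1+\pr_2(v)$, so $\Phi$ is injective with inverse $p\mapsto a_1+rb_1+p$. The remaining conditions that $v$ be isotropic to $\D^k$ and that $\ideal{\D^k,v}$ be a rank-$(k+2)$ isotropic summand translate, via the same summand argument now applied to $\ideal{\D^k,v}$, into $\pr_2(v)$ being isotropic to $\pr_2(S(\D^k))$ and $(\D'^{k-1},\pr_2(v))$ being a $k$-simplex in $\L(g)$. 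The analysis extends verbatim from vertices to higher-dimensional simplices, and the inverse similarly lifts simplices of $\Letblank{\D'^{k-1}}(g)$ to simplices of $\Ldel(g)$. The main obstacle is the initial rigidity analysis that pins $r_j=r$ uniformly and places $a_1+rb_1$ inside $S(\D^k)$; once this is in hand, the containment $S(\D^k)\subseteq\ideal{a_1+rb_1}\oplus\pr_2(H)$ makes the summand statements follow cleanly from the observation that a summand of $H$ lying inside a summand $H'$ is automatically a summand of $H'$.
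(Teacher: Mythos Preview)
Your proposal is correct and follows essentially the same route as the paper's own proof: both extract from $\gcdto(\D^k)=0$ and isotropy that all $b_1$-ranks equal a common value $r$, place $a_1+rb_1$ in the span of $\D^k$, and then use $\pr_2$ to identify $\Ldel(g)$ with the link of a $(k-1)$-simplex in $\L(g)$, invoking Proposition~\ref{p:conn1}$(ii)$. The paper's only cosmetic difference is that it passes through an auxiliary simplex $\Lambda^k=(a_1+rb_1,\,a_1+rb_1+\tilde x_1,\ldots)$ with the same link before projecting, whereas you work directly with $S(\D^k)$ and its quotient by $\ideal{a_1+rb_1}$; your summand argument for $\pr_2(S(\D^k))$ is in fact a bit more carefully justified than the paper's.
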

\begin{proof}
Let $\D^k=(v_0,\ldots,v_k)$ and denote $\pr_2(\D^k)$ by $(\tilde v_0,\ldots, \tilde v_k)$, i.e. $v_i=a_1+r_ib_1+ \tilde v_i$, $i=0,\ldots,k$.  Since $\gcdto(\D^k)=0$, the set $\set{\tilde v_0,\ldots, \tilde v_k}$ is linearly dependent, which gives some $c_0,\ldots,c_k\in\Z$ relatively prime, with
\begin{equation}\label{e:depend}
\sum_{i=0}^kc_i v_i= sa_1+tb_1, \quad \text{for some }s,t\in\Z.
\end{equation}
Since $\set{v_0,\ldots,v_k}$ is isotropic, $\ialg{v_i,sa_1+tb_1}=0$, meaning $t-sr_i=0$ for all $i=0,\ldots,k$. So $s\mid t$, and we can assume $s=1$. Therefore, $r_i=\rk^{b_1}(v_i)=t$ for $i=0,\ldots,k$. 
Using \eqref{e:depend} it is easy to conclude that for any $\ww\in\Ldel(g)$, the $b_1$-coordinate of each vertex in $\ww$ is always $t$.

Write $H_2:=\pr_2(H)=\ideal{a_2,b_2,\ldots,a_{g+1},b_{g+1}}$. Let $(\tilde x_1,\ldots,\tilde x_k)$ be a basis of $\ideal{\tilde v_0,\ldots,\tilde v_k}$ in $H_2$; then set $x_0=a_1+tb_1$, and $x_i=x_0+\tilde x_i$. Then $\Lambda^k=(x_0,x_1,\ldots,x_k)$ is also a simplex in $\Ll$, and  $\Lblank{\Lambda^k}(g)=\Ldel(g)$. 

If we identify $H_2$ with $H(g)$, then we see that $\tilde\Lambda^{k-1}:=(\tilde x_1,\ldots,\tilde x_k)$ becomes a $(k-1)$-simplex in $\L(g)$. Then $\Lblank{\Lambda^k}(g)\iso \Letblank{\tilde \Lambda^{k-1}}(g)$ via the isomorphism
$ \vv\mapsto\pr_2(\vv)$, since $\vv=a_1+tb_1+\pr_2(\vv)$ by the above. From Prop. \ref{p:conn1} we know $\L^{\tilde \Lambda^{k-1}}(g)$ is $(g-k-2)$-connected.
\end{proof}

Consequently, we can focus on simplices $\D^k\in \Ll(g)$ with $\gcdto(\D^k)\neq 0$:
\begin{prop}\label{p:reduce}Let $g\ge 3$ and $-1\le k\le g-1$. For any $k$-simplex $\D^k\in \Ll(g)$ with $\gcdto(\D^k)\neq 0$, consider the following:
\begin{itemize}
  \item[$(i)$] $\pi_n(\Ll(g))=0$ for $-1\le n\le g-2$,
  \item[$(ii)$] $\pi_n(\Ldelpos(g))=0$ for $-1\le n\le g-2$,
  \item[$(iii)$] $\pi_n(\Ldelgode(g))=0$ for $-1\le n\le g-k-3$,
  \item[$(iv)$] $\pi_n(\Ldelgodeb(g))=0$ for $-1\le n\le g-k-3$, where 
\begin{equation}\label{e:t}
	t= \left\{	
\begin{array}{ll}
	0&\hbox{if }\gcdto(\D^k)=1,\\
	\rk^{b_1}(v_0)&\hbox{if }\gcdto(\D^k)\neq1, \hbox{where }\D^k=(v_0,\ldots,v_k).
\end{array}
 \right.
\end{equation}
\end{itemize}
Then $(iv)\Tto(iii)\Tto(ii)\Tto(i)$.
\end{prop}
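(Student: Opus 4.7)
My plan is to prove all three implications by a unified bad-simplex / link-move argument modeled on the proof of Proposition \ref{p:conn1}: given a simplicial map $f: S^n \to $ (larger complex), I will identify ``bad'' simplices on which $f$ fails the extra condition defining the smaller complex, take a bad simplex of maximal dimension, and perform a link move (Definition \ref{d:link}) whose filling disk is provided by the next-stronger statement of the proposition. In each case, the numerics $n-\dim\s-1 \le \textup{connectivity}$ of the filling target will line up with the stated hypotheses.

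For $(ii)\Rightarrow(i)$: given $f: S^n \to \Ll(g)$ with $n\le g-2$, I declare $\s$ bad when $\gcdto(f(\s))=0$. Lemma \ref{l:gcd0conn} gives the $(g-\dim\s-2)$-connectedness of the filling target $\Lblank{f(\s)}(g)$, and $n-\dim\s-1\le g-\dim\s-2$. Iterating link moves terminates at a map factoring through $\Ldelpos(g)$, null-homotopic by $(ii)$. For $(iii)\Rightarrow(ii)$: given $f: S^n \to \Ldelpos(g)$ with $n\le g-2$, declare $\s$ bad when $\gcdto(f(\s))>1$; the filling target is $\Lblankgode{f(\s)}(g)$, which is $(g-\dim\s-3)$-connected by $(iii)$ with $\D^k=f(\s)$. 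After removing all bad simplices, $f$ factors through $\Lgode(g)$, which is $(g-2)$-connected by $(iii)$ with $\D^k=\emptyset$.

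For $(iv)\Rightarrow(iii)$: given $f: S^n \to \Ldelgode(g)$ with $n\le g-k-3$ and $t$ as in \eqref{e:t}, I declare a vertex $s$ bad when $\rk^{b_1}(f(s))\ne t$, and treat bad vertices one at a time by link moves whose fillings lie in $\Ldelgodeb(g)$; since $\dim\s=0$, I need $n-1\le g-k-3$, which follows from $n\le g-k-3$. The dichotomy in \eqref{e:t} reflects two regimes: when $\gcdto(\D^k)=1$ there is enough flexibility to normalize to $t=0$, while when $\gcdto(\D^k)>1$ the isotropy relations with $\D^k$ pin $t$ to $\rk^{b_1}(v_0)$.

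The main obstacle will be $(iv)\Rightarrow(iii)$. The $b_1$-coordinate of a vector $v = a_1 + rb_1 + \pr_2(v)$ in $\Ll(g)$ cannot simply be adjusted by division with remainder, because isotropy couples it to $\pr_2$ via $\ialg{\pr_2(v_i),\pr_2(v_j)} = r_i - r_j$; altering $r$ in isolation breaks isotropy. The link move must substitute whole vertices, and I will need to verify that the filling subcomplex is precisely the one covered by $(iv)$ and that each move does not reintroduce bad vertices elsewhere --- bookkeeping that likely requires inducting on the number of bad vertices rather than on a single notion of ``maximal badness''.
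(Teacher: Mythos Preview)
Your overall plan is right --- each implication is indeed a bad-simplex/link-move argument --- but the specific notions of ``bad'' you propose do not work, and this is exactly where the content of the proof lies.

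For $(iii)\Rightarrow(ii)$ you declare $\s$ bad when $\gcdto(f(\s))>1$. But by Remark~\ref{r:gcd}, $\gcdto(f(\s))$ divides $\gcdto(f(\tau*\s))$ for every $\tau$ with $\tau*\s$ a simplex in $\Ldelpos(g)$; hence badness is inherited by supersimplices, and a ``maximal bad simplex'' is top-dimensional, so the link move is vacuous. The same problem occurs in your $(ii)\Rightarrow(i)$: if $\gcdto(f(\s))=0$ then every supersimplex also has $\gcdto=0$. The paper avoids this by using carefully tuned definitions: for $(ii)\Rightarrow(i)$ one takes $\s$ \emph{minimal} with $\gcdto(f(\s))=0$ (all proper faces have $\gcdto\neq 0$); for $(iii)\Rightarrow(ii)$ one says $\s$ is bad when \emph{every vertex} $v\in f(\s)$ satisfies $\gcdto(v,S(f(\s)\setminus v))>1$. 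The latter is not upward-closed, and the paper then needs a nontrivial argument (the claim \eqref{e:claim}) to show that $f|_{\link(\s)}$ lands in $\Lblankgode{f(\s)}(g)$ --- note the target involves $S(f(\s))$, not just $f(\s)$, and maximality alone does not give $\gcdto(f(\tau),S(f(\s)))=1$.

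For $(iv)\Rightarrow(iii)$ your idea of handling bad vertices one at a time fails for the reason you half-anticipate: if $s$ is bad, $\link_S(s)$ may contain other bad vertices, so $f|_{\link(s)}$ does not land in any complex of the form $\Lblankgodeb{\cdot}(g)$, and $(iv)$ gives you no filling. (Also, the filling target must be $\Lblankgodeb{f(\s)*\D^k}(g)$, not $\Ldelgodeb(g)$; the link move requires the disk to live in the link of $f(\s)$.) The fix is to take $\s$ bad when \emph{all} vertices of $f(\s)$ have $b_1$-rank $\neq t$, and then pick $\s$ of maximal dimension: maximality forces every vertex of $f(\link(\s))$ to have $b_1$-rank $=t$, so $f|_{\link(\s)}$ lands in $\Lblankgodeb{f(\s)*\D^k}(g)$ as required.
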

\begin{proof} All the implications $\Tto$ will be shown similarly, so we give the first one in detail, and in the others focus on the differences.

$(iv)\Tto (iii)$: Assume $-1\le n\le g-k-3$. Let $S$ be a simplicial $n$-sphere, and let $f:S\To \Ldelgode(g)$ be a simplicial map. We wish to homotope $f$ and $S$ so $f(S)$ lies in $\Ldelgodeb(g)$.

Let $t$ be as specified in \eqref{e:t}. Call a simplex $\s\in S$ \emph{regular bad}, if all $b_1$-coordinates in $f(\s)$ are $\neq t$. Let $\s\in S$ be regular bad of maximal dimension, say $\dim(\s)=m$.

We claim $f(\link(\s))\del \link(f(\s))$. Since $f$ is simplicial, it suffices to show that $f(\s)\cap f(\link(\s))=\Ø$. This follows from regularly bad of maximal dimension; indeed if not, and $v\in f(\s)\cap f(\link(\s))$ is a vertex, then $v=f(s)$ for $s\in S^{(0)}$, and $s*\s$ would also be regular bad, contradicting the maximality of $\s$. This argument is quite general (it holds for most definitions of regular bad we will use) and the result will henceforth be used without comment.

It follows that every simplex $\vv\in f(\link(\s))$ has the property that all $b_1$-coordinates of $\vv$ are $t$. So 
\begin{equation}\label{e:flink}
	f|_{\link(\s)}:\link(\s)\To \Lblankgodeb{f(\s)*\D^k}(g),
\end{equation}
and we know from $(iv)$, since $\dim(f(\s)*\D^k)\le k+m+1$,
that $\Lblankgodeb{f(\s)*\D^k}(g)$ is $(g-k-m-2)$-connected. Also, $\link(\s)$ is an $(n-m-1)$-sphere, where $n-m-1\le g-k-m-2$. So there is a simplicial $(n-m)$-ball $B$ with $\dd B=\link(\tau)$, and a map $\f: B\To  \Lblank{\D^k*\ww}(g)$, such that $\f|_{\dd B}= f|_{\link(\tau)}$. Now we perform a link move to $f$ on $\tau$ with $\f$. Call the resulting map $f'$; it is homotopic to $f$. Note, all this follows from \eqref{e:flink} and the induction in $(iv)$.

We wish to show that we have introduced no new regular bad simplices in $S'$ of dimension $\ge m$. By construction a new simplex in $S'$ has the form $\tau_1*\tau_2$, where $\tau_1\in\dd\s$ and $\tau_2\in B$ (one of them can be the empty simplex). Thus $f(\tau_2)$ has all $b_1$-coordinates equal to $t$, and so if $\tau_2\neq \Ø$, then $\tau_1*\tau_2$ cannot be regularly bad. So we have introduced no new regular bad simplices.

This shows we can, through homotopies of the starting map $f$, remove all bad simplices by induction in the maximal dimension of regular bad simplices. When there are no regular bad simplices left, we have $f:S\To \Ldelgodeb(g)$, and by $(iv)$ this complex is $(g-k-3)$-connected, so we are done.

\vspace{0.3cm}\noindent $(iii)\Tto (ii)$:  Let $S$ be a simplicial $n$-sphere, and let $f:S\To \Ldelpos(g)$ be a simplicial map. We say $\s\in S$ is regular bad if for all vertices $v\in f(\s)$ we have $\gcdto(v, S(f(\s)\fra v))>1$. Here $f(\s)\fra v$ denotes the difference in vertex sets. Let $\s\in S$ be regular bad of maximal dimension.%, say $\dim(\s)=m$.

We claim:
      \begin{equation}\label{e:claim}
      f|_{\link(\s)}:\link(\s) \To \Lblankgode{f(\s)}(g).
      \end{equation}
By maximality, $f(\link(\s))\del \link(f(\s))$. So we must show for all $\tau\del \link(\s)$ that
$\gcdto(f(\tau),S(f(\s)))=1$. Assume for contradiction there is $\tau\del \link(\s)$ such that $\gcdto(f(\tau),S(f(\s)))>1$.
We know $\t*\s$ is not regular bad by maximality of $\s$, so there is a vertex $v\in f(\t)*f(\s)$, such that
\begin{equation}\label{e:fra}
\gcdto(v, S(f(\tau)*f(\s)\fra v))=1.
\end{equation}
If $v\in f(\s)$ then we get by Remark \ref{r:gcdnonzero}:
\begin{equation*}
1=\gcdto(v, S(f(\t)*f(\s)\fra v))=\gcdto(v, S(f(\s)\fra v))> 1.
\end{equation*}
So we know that $v\in f(\t)$. Consider $f(\t)\fra v$. We see from \eqref{e:fra} that
\begin{equation*}
\gcdto(f(\t)\fra v, S(f(\s)))= \gcdto(v, f(\t)\fra v, S(f(\s)))=\gcdto(f(\t), S(f(\s)))\neq 1
\end{equation*}
Thus we can use the same argument with $f(\t)\fra v$ instead of $f(\t)$. Iterating this, we reach the absurd conclusion that
$\gcdto(S(f(\s)))\ne 1$, so we have shown the claim \eqref{e:claim}.

Now the proof runs as above by induction in $(iii)$.  When there are no regular bad simplices left, we have $f:S\To \Ldelgode(g)$, so we are done.

\vspace{0.3cm}\noindent $(ii)\Tto(i)$: A simplex $\s\in S$ is called regular bad if it satisfies both $\gcdto(f(\s))=0$, and $\gcdto(\vv)\neq 0$ for all proper subsimplices $\vv\subsetneq f(\s)$. Let $\s$ be regular bad of maximal dimension, say $\dim(\s)=m$. Then $f(\link(\s))\del \link(f(\s))$, and by Lemma \ref{l:gcd0conn},  $\link_{\Ll(g)}(f(\s))=\Lblank{f(\s)}(g)$ is at least $(g-m-2)$-connected. Using this instead of induction yields the result.
\end{proof}

\subsection{Connectivity of $B(F_{g,2};2)/\I_{g,2}$, second part}\label{ss:conntraels}

In this section we prove the connectivity of $\Ldelgodeb(g)$, where $t\in\Z$ is as in \eqref{e:t}. This turns out to be trickier than one should think, and we need more reductions to prove the result. The problem is that $\D^k$ itself need neither satisfy $\gcdto(\D^k)=1$ nor that $\rk^{b_1}(v)=t$ for all vertices $v$ of $\D^k$.

In this section, recall the meaning of $\D=\D_1*\D_2$ etc, from Remark \ref{r:join}.

\begin{rem}\label{r:dual}We will apply Prop. \ref{p:dual} to the projection simplices, and use the following notation: If $\D$ is an $n$-simplex, we will write $S(\D)=S(\pr_2(\D))$, the smallest summand containing $\pr_2(\D)$. Then $D(\D)$ denotes a dual summand of $S(\D)$ in $H_2=\pr_2(H)$, and $T(\D)$ the symplectic subspace such that $S(\D)\oplus D(\D)\oplus T(\D)=H_2$. Then $(iii)$ and $(iv)$ of Prop. \ref{p:dual} can be stated as follows:
\begin{itemize}
    \item[$(i)$]Given $\D_1$ and $\D_2$ such that $\D_1*\D_2$ is a simplex with $\gcdto(\D_1*\D_2)\neq 0$, and given a dual summand $D(\D_1)$. If $\gcdto(\D_2, S(\D_1),D(\D_1))=1$, then there is $D(\D_2)\del T(\D_1)$. In particular we can choose $D(\D_1*\D_2)=D(\D_1)\oplus D(\D_2)$.
\item[$(ii)$]Given $\D_1$,$\D_2$ and $\D_3$ such that $\D_1*\D_2*\D_3$ is a simplex, and given dual summands $D(\D_1)$ and $D(\D_2)$. If $\gcdto(\D_2*\D_3, S(\D_1),D(\D_1))=1$, then there is $D(\D_2*\D_3)\del T(\D_1)$ with
\begin{equation*}
    S(\D_2)\oplus D(\D_2)\del S(\D_1) \oplus D(\D_1) \oplus S(\D_2*\D_3)\oplus D(\D_2*\D_3)
\end{equation*}
\end{itemize}
\end{rem}

\begin{defn}
Let $\D=\D_1*\D_2*\D_3\in \Ldelpos(g)$, and assume that $\gcdto(\D_2, S(\D_1),D(\D_1))=1$. Let $D(\D_1)$ and $D(\D_2)\del T(\D_1)$ denote a choice of dual summands of $S(\D_1)$ and $S(\D_2)$, respectively, as in Remark \ref{r:dual} $(i)$. 
We define $\Mdel(g)$ to be the subcomplex of $\Lblankgodeb{\D}(g)$ consisting of simplices $\ww$ which satisfy:
\begin{itemize}
  \item[(a)]$\gcd_2\Big(\ww,S\big(\D_1,\D_2,\D_3,D(\D_1),D(\D_2)\big)\Big)=1$.
  \item[(b)] $\ww \perp D(\D_1)$.
\end{itemize}
%Note: If we write $S(\D_i)=\Ø$, it is understood that $\D_i=0$.
\end{defn}
The reader should be aware that the role of the \emph{first} non-empty simplex among $\D_1,\D_2,\D_3$ is to be a bad simplex from Prop. \ref{p:reduce}, so we can only assume it is in $\Ldelpos(g)$. 

\begin{rem}\label{r:idea}This is the idea: First note that $\Mdelblank{0}{0}{\Ø,\Ø,\D}(g)=\Lblankgodeb{\D}(g)$, which we need to show is $(g-k-3)$-connected. The following proposition reduces this to showing that $\Mdelblank{D(\D)}{0}{\D,\Ø,\Ø}(g)$ is $(g-k-3)$-connected, and in this complex, it is possible to make modifications enough to do division with remainder, as in Prop. \ref{p:putmansver}: Indeed, if we set $\tilde f(s)=\f(s)-q_sv$, where $v\in f(\s)$, then $\rk^{a_1}(\tilde f(s)\neq 1$. To remedy this, we are forced to use $\tilde\f'(s)=a_1+tb_1+\pr_2(\f(s)-q_sv)$ instead, but then $\tilde\f'(s)$ is no longer orthogonal to $\D$. And here $\Mdelblank{D(\D)}{0}{\D}(g)$ saves the day: All its simplices are orthogonal to $D(\D)$, so we can set $\f(s)=\tilde\f'(s)+u_s$, where $u_s\in D(\D)$ satisfies $\f(s)\perp \D$, without changing anything else ($u_s$ is constructed in Lemma \ref{r:compensate}).
\end{rem}
\begin{lem}\label{l:reduce2}Let $\D=\D_1*\D_2*\D_3\in \Ldelpos(g)$ be a $k$-simplex. Assume: 
\begin{itemize}
	\item If $\D_1\neq\Ø$, that $\D_1\in\Ldelpos(g)$, $\D_2\in \Mdelblank{0}{D(\D_1)}{\Ø,\D_1,\Ø}(g)$, $\D_3\in \Mdelblank{D(\D_1)}{0}{\D_1,\Ø,\D_2}(g)$. 
	\item If $\D_1=\Ø$, that $\D_2\in\Ldelpos(g)$ and $\D_3\in \Mdelblank{0}{0}{\Ø,\Ø,\D_2}(g)=\Lblankgodeb{\D_2}(g)$. 
	\item If $\D_1=\D_2=\Ø$, that $\D_3\in\Ldelpos(g)$.
\end{itemize}
  Consider the following:
\begin{itemize}
  \item[$(i)$] $\pi_n(\Mdel(g))=0$ for $n\le g-k-3$.
	\item[$(ii)$] $\pi_n(\Mdelblank{D(\D_1)}{D(\D_2)}{\D_1,\D_2,\Ø}(g))=0$ for $n\le g-k-3$.
  \item[$(iii)$] $\pi_n(\Mdelblank{D(\D_1)}{0}{\D_1,\Ø,\Ø}(g))=0$ for $n\le g-k-3$.
\end{itemize}
Then $(iii)$ implies $(i)$ and $(ii)$.
\end{lem}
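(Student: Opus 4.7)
The plan is to prove the chain $(iii) \Rightarrow (ii) \Rightarrow (i)$ by bad-simplex reductions in the spirit of Prop. \ref{p:conn1}. The key observation driving both implications is that one can absorb a decoration simplex into $\D_1$: by Remark \ref{r:dual}$(i)$, $D(\D_1 * \D_2) = D(\D_1) \oplus D(\D_2)$, which identifies $\Mdelblank{D(\D_1*\D_2)}{0}{\D_1*\D_2, \�, \�}(g)$ with precisely the subcomplex of $\Mdelblank{D(\D_1)}{D(\D_2)}{\D_1, \D_2, \�}(g)$ cut out by the single additional orthogonality $\ww \perp D(\D_2)$ (the link and gcd conditions coincide). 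Similarly, the lemma's hypothesis $\D_3 \in \Mdelblank{D(\D_1)}{0}{\D_1, \�, \D_2}(g)$ implies $\gcdto(\D_3, S(\D_1), D(\D_1)) = 1$, so Remark \ref{r:dual}$(i)$ applied to $(\D_1, \D_3)$ yields $D(\D_1 * \D_3) = D(\D_1) \oplus D(\D_3)$ and identifies $\Mdelblank{D(\D_1*\D_3)}{D(\D_2)}{\D_1*\D_3, \D_2, \�}(g)$ with the subcomplex of $\Mdel(g)$ cut out by $\ww \perp D(\D_3)$.

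For $(iii) \Rightarrow (ii)$, I would take $f : S^n \to \Mdelblank{D(\D_1)}{D(\D_2)}{\D_1, \D_2, \�}(g)$ with $n \le g-k-3$, fix a basis $d_1, \ldots, d_m$ of $D(\D_2)$, and reduce
\[
R_j := \max\{\,|\ialg{f(s), d_j}| : s \in S^{(0)}\,\}
\]
to zero one index $j$ at a time. When all $R_j$ vanish the image lies in $\Mdelblank{D(\D_1*\D_2)}{0}{\D_1*\D_2, \�, \�}(g)$, which is $(g-k-3)$-connected by $(iii)$ applied to $\D_1 * \D_2$ (since $\dim(\D_1 * \D_2) = k$ here). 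When $R_j > 0$, call $\s \in S$ regular bad if every vertex $s'$ of $\s$ satisfies $|\ialg{f(s'), d_j}| = R_j$, take $\s$ of maximal dimension, observe that $f|_{\link \s}$ lands in the link of $f(\s)$ in $\M$ (an $\M$-type complex with decorating simplex enlarged by $f(\s)$) which by inductive application of the lemma's case $(i)$ on the dimension of $\D$ is sufficiently connected to admit an extension $\f : B \to \M$, then perform division with remainder by the substitution $v \mapsto v - q \, \pr_2(v_0)$ (with $v_0$ a vertex of $\s$ having $|\ialg{v_0, d_j}| = R_j$ and $q = 0$ on $\dd B$), and finally execute the link move of Def. \ref{d:link}. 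The argument for $(ii) \Rightarrow (i)$ is entirely parallel, using a basis of $D(\D_3)$ in place of $D(\D_2)$ and invoking $(ii)$ applied to $(\D_1 * \D_3, \D_2, \�)$ once all $R_j$ vanish; the relevant $k$ is again unchanged.

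The main obstacle will be verifying that the modified vertices $v - q\,\pr_2(v_0)$ still lie in the ambient complex. The $a_1$- and $b_1$-ranks are preserved because $\pr_2(v_0) \in H_2$. Condition (b), $\ww \perp D(\D_1)$, is preserved because $\pr_2(v_0) = v_0 - (a_1 + t b_1)$, and both $v_0$ (by hypothesis on $f(\s) \subset \M$) and $a_1 + t b_1$ (because $\ideal{a_1, b_1}$ is symplectically orthogonal to $H_2 \supset D(\D_1)$) are orthogonal to $D(\D_1)$. Isotropy with the unmodified vertices of the simplex follows from the same identity combined with isotropy of the original simplex. Condition (a), the gcd condition, is preserved because the substitution acts as an elementary column operation on the matrix whose columns are the $\pr_2$-projections of the vertices of the full simplex (together with a basis of $S(\D_1, \D_2, D(\D_1), D(\D_2))$) — crucially $\pr_2(v_0)$ appears as one of these columns since $v_0 \in f(\s)$ is part of the extended simplex being checked. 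Finally, as in Prop. \ref{p:conn1}, the link move introduces no new regular bad simplices, since any new face contains at least one vertex from the interior of $B$ where $|\ialg{\cdot, d_j}|$ has been driven strictly below $R_j$.
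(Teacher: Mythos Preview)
Your division-with-remainder step has a genuine gap: the modified vertex $v' = v - q\,\pr_2(v_0)$ need not remain in $\link_{\Ll(g)}(\D)$, because it is not orthogonal to $\D_1$. Writing $\pr_2(v_0) = v_0 - (a_1 + t b_1)$ and taking a vertex $d = a_1 + r b_1 + \tilde d$ of $\D_1$, one computes
\[
\ialg{v',d} \;=\; \ialg{v,d} - q\,\ialg{v_0 - (a_1+tb_1),\,d} \;=\; 0 - q\bigl(0 - (t-r)\bigr) \;=\; q(t-r),
\]
which is nonzero whenever $r \ne t$. The hypotheses of the lemma only say $\D_1 \in \Ldelpos(g)$, so the $b_1$-ranks of the vertices of $\D_1$ are unconstrained; this is exactly the obstruction flagged in Remark~\ref{r:idea} and the reason Lemma~\ref{r:compensate} is needed later. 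You verified preservation of $\ww \perp D(\D_1)$, of isotropy among the $\Ldelgodeb$-vertices (which all have $b_1$-rank $t$), and of the gcd condition, but not of $\ww \perp \D_1$ itself.

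The paper's proof avoids division with remainder entirely at this stage. For $(ii)\Rightarrow(iii)$ it declares $\s$ bad when every vertex fails $\perp D(\D_2)$, fills $\link(\s)$ using the inductive hypothesis for $(i)$ with $f(\s)$ in the role of $\D_3$, and observes that once no bad simplices remain the image lies in $\Mdelblank{D(\D_1*\D_2)}{0}{\D_1*\D_2,\�,\�}(g)$, which is case $(iii)$. For $(i)\Rightarrow(ii)$ it uses a gcd-based notion of bad simplex (as in Prop.~\ref{p:reduce}, $(iii)\Rightarrow(ii)$) to push the image into $\Mdelblank{D(\D_1)}{D(\D_2*\D_3)}{\D_1,\D_2*\D_3,\�}(g)$. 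Division with remainder is postponed to Prop.~\ref{p:putmansver}, where one works in $\NN_\D = \Mdelblank{D(\D)}{0}{\D,\�,\�}(g)$; there every vertex is orthogonal to $D(\D)$, so the compensator $u \in D(\D)$ from Lemma~\ref{r:compensate} can be added to restore $\perp \D$ without disturbing anything else.
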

\begin{rem}\label{r:delcomplex}By the assumptions in the Lemma, one checks that Remark \ref{r:dual} can be used to create new dual summands, thereby ensuring that
\begin{eqnarray*}
% \nonumber to remove numbering (before each equation)
  \Mdelblank{D(\D_1)}{D(\D_2*\D_3)}{\D_1,\D_2*\D_3,\Ø}(g) &\del& \Mdel(g), \\
  \Mdelblank{D(\D_1*\D_2)}{0}{\D_1*\D_2,\Ø,\Ø}(g) &\del&  \Mdelblank{D(\D_1)}{D(\D_2)}{\D_1,\D_2,\Ø}(g).
\end{eqnarray*}
\end{rem}

\begin{proof}We use the same strategy as the proof of Prop. \ref{p:reduce}. The argument is inductive in $n$, so let $n$ be fixed.

$(i):$ We inductively assume $(i)$ for all $n'<n$, and $(ii)$. Let $f:S\To \Mdel(g)$ be a simplicial map from a simplicial $n$-sphere $S$. Since $\D_2*\D_3\in \Mdelblank{0}{D(\D_1)}{\Ø,\D_1,\Ø}$, we can construct $D(\D_2*\D_3)$ as in Remark \ref{r:dual} $(ii)$.

We say $\s\in S$ is regular bad if for all vertexes $v\in f(\s)$, we have
\begin{equation}\label{e:regbad}
\gcdto\Big(v,S\big(f(\s)\fra v,\D,D(\D_1),D(\D_2*\D_3)\big)\Big)\ne 1,
\end{equation}
where $f(\s)\fra v$ is the difference between the vertex sets.  Let $\s$ be a regular bad simplex of maximal dimension. We claim, see Remark \ref{r:delcomplex}, that
      \begin{equation*}
      f|_{\link(\s)}:\link(\s) \To \Mdelblank{D(\D_1)}{D(\D_2*\D_3)}{\D_1,\D_2*\D_3,f(\s)}(g).
      \end{equation*}
To see this, we must show for all $\tau\del \link(\s)$ that
\begin{equation*}
\gcdto\Big(f(\tau),S\big(\D,f(\s),D(\D_1),D(\D_2*\D_3)\big)\Big)=1.
\end{equation*}
The argument is verbatim as in the proof of Prop. \ref{p:reduce} $(iii)\Tto (ii)$, replacing $S(f(\s))$ with $S(\D,f(\s),D(\D_1),D(\D_2*\D_3))$.
We can now use $(i)$ inductively to fill put the link in $\Mdelblank{D(\D_1)}{D(\D_2*\D_3)}{\D_1,\D_2*\D_3,f(\s)}(g)$, perform a link move to $f$, and check that this creates no new regular bad simplices, as in the proof of Prop. \ref{p:reduce}.

Performing this process inductively, we can assume that there are no regular bad simplices of $f$. Then by definition, $f(S)\del  \Mdelblank{D(\D_1)}{D(\D_2*\D_3)}{\D_1,\D_2*\D_3,\Ø}(g)$
and by $(ii)$, we are done.

$(ii):$ We say $\s\in S$ is regular bad if all vertices $s$ of $\s$ satisfy $s\not\perp S(\D_2)$. Let $\s$ be a regular bad simplex of maximal dimension. We can choose $D(\D_1*\D_2)=D(\D_1)\oplus D(\D_2)$ by Remark \ref{r:dual} $(i)$. Now use $(i)$ inductively on $\link(\s)$ to fill out the link in $\Mdelblank{D(\D_1)\oplus D(\D_2)}{0}{\D_1*\D_2,\Ø,f(\s)}(g)$. After removing all regular bad simplices, we are in case $(iii)$.
\end{proof}
We now construct the $u\in D(\D)$ mentioned in Remark \ref{r:idea}: 
\begin{lem}\label{r:compensate}Given $\D$, there exists $u\in D(\D)$ such that $a_1+tb_1+u\perp \D$.
\end{lem}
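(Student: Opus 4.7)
The plan is to reduce the problem to constructing an appropriate $\Z$-linear functional on $S(\D)$ and then extracting $u$ via the dual summand structure. Write each vertex of $\D$ as $v_i = a_1 + r_i b_1 + \tilde{v}_i$, where $r_i = \rk^{b_1}(v_i)$ and $\tilde{v}_i = \pr_2(v_i) \in H_2 = \pr_2(H)$. Since $u \in D(\D) \subset H_2$ and each $\tilde{v}_i \in H_2$ pair trivially with $a_1$ and $b_1$, a direct expansion gives
\[
\ialg{a_1 + tb_1 + u,\, v_i} = r_i - t + \ialg{u, \tilde{v}_i},
\]
so the task reduces to producing $u \in D(\D)$ with $\ialg{u, \tilde{v}_i} = t - r_i$ for every vertex $v_i$ of $\D$.

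Next I would record the following general fact: for any $\Z$-linear functional $\phi : S(\D) \to \Z$, there exists $u \in D(\D)$ with $\ialg{u, s} = \phi(s)$ for all $s \in S(\D)$. This is immediate from the defining property of $D(\D)$: choose a basis $v'_0,\ldots,v'_k$ of $S(\D)$ together with a dual basis $u'_0,\ldots,u'_k$ of $D(\D)$ satisfying $\ialg{v'_i, u'_j} = \de_{ij}$, and set $u = \sum_j \phi(v'_j)\, u'_j$. It therefore suffices to find a $\Z$-linear $\phi : S(\D) \to \Z$ with $\phi(\tilde{v}_i) = t - r_i$. The $\tilde{v}_i$ are linearly independent since $\D \in \Ldelpos(g)$, so these values determine $\phi$ uniquely on $\ideal{\tilde{v}_0,\ldots,\tilde{v}_k} \otimes \Q$; the genuine content of the lemma is the \emph{integral} extension to the saturation $S(\D)$.

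To establish integrality I would split on the value of $t$ as in \eqref{e:t}. If $\gcdto(\D) = 1$, then $t = 0$ and $(\tilde{v}_0,\ldots,\tilde{v}_k)$ is already a $\Z$-basis of $S(\D)$, so setting $\phi(\tilde{v}_i) = -r_i$ defines $\phi$ on $S(\D)$ directly. If $\gcdto(\D) \ne 1$, then $t = r_0$ and the isotropy of $\D$ is essential: expanding $\ialg{v_0, v_i} = 0$ yields $\ialg{\tilde{v}_0, \tilde{v}_i} = r_0 - r_i = t - r_i$, so the functional $\phi := \ialg{\tilde{v}_0,\,\cdot\,}\big|_{S(\D)}$ is $\Z$-valued (since $\tilde{v}_0 \in S(\D) \subset H_2$ and the symplectic pairing on $H$ is integral) and has exactly the required values on the $\tilde{v}_i$. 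Applying the general construction above to $\phi$ in each case produces the desired $u$. The main obstacle is precisely this integrality check in the case $\gcdto(\D) \ne 1$, where the $\tilde{v}_i$ fail to generate $S(\D)$ as a lattice; it is the combination of the isotropy relation and the deliberate choice $t = r_0$ from \eqref{e:t} that allows an honest integral extension of the functional.
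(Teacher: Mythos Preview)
Your proposal is correct and follows essentially the same route as the paper: both split on whether $\gcdto(\D)=1$, use the dual basis of $D(\D)$ to realize the needed values, and in the second case exploit the isotropy relation $\ialg{v_0,v_i}=0$ together with the choice $t=\rk^{b_1}(v_0)$ to obtain an integral functional via $\ialg{\tilde v_0,\cdot}$. Your functional framing is a clean repackaging of exactly the computation the paper carries out explicitly; the only cosmetic slip is a sign in the formula $u=\sum_j\phi(v'_j)u'_j$ (the symplectic form is alternating, so this gives $\ialg{u,v'_i}=-\phi(v'_i)$), which is harmless since one may simply negate.
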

\begin{proof}Write $\D=(v_0,v_1,\ldots,v_k)$. There are two cases:

First if $\gcdto(\D)=1$, write $\pr_2(\D)=(v_0',\ldots,v_k')$ (this is a basis of $S(\D)$). Let $(u_0,\ldots, u_k)$ denote a dual basis. Set $u=\sum_{j=0}^k \ialg{v_j,a_1+tb_1}u_j$. Then $\ialg{u,v_j'}=\ialg{v_j,a_1+tb_1}$. Since $u\in D(\D)$, we get for $j=0,\ldots,k$,
\begin{equation*}
    \ialg{u,v_j}=\ialg{\pr_2(u),v_j}=\ialg{u,\pr_2(v_j)}= \ialg{u,v_j'} = -\ialg{a_1+tb_1,v_j}.
\end{equation*}

If $\gcdto(\D)>1$, then $t=\rk^{b_1}(v_0)$, see Prop. \ref{p:reduce} $(iv)$. Therefore $\ialg{a_1+tb_1,v_j}=-\ialg{\pr_2(v_0),v_j}$. Let $(v_0',\ldots,v_k')$ be a basis of $S(\D)$, and let $(u_0,\ldots,u_k)$ be a dual basis. Set $u= \sum_{j=0}^k\ialg{\pr_2(v_0),v_j'}u_j$. Then %$\ialg{u,v_i'}=\ialg{\pr_2(v_0),v_i'}$ for all $i$, which implies
$\ialg{u,v}=\ialg{\pr_2(v_0),v}$ for $v\in S(\D)$. Thus for all $j=0,\ldots,k$:
\begin{equation*}
    \ialg{u,v_j}= \ialg{u,\pr_2(v_j)}= \ialg{\pr_2(v_0),v_j}=-\ialg{a_1+tb_1,v_j}.
\end{equation*}
This shows the lemma.\end{proof}

Finally we can show the remaining part, $(iii)$ of Lemma \ref{l:reduce2}. We have given a $k$-simplex $\D\in \Ldelpos(g)$, a dual summand $D(\D)$, and $T(\D)$ as in Remark \ref{r:dual}. To ease the notation, let $\Ndel(g)=\Mdelblank{D(\D)}{0}{\D,\Ø,\Ø}(g)$, and similarly for $\ww\del\Ndel(g)$, let $\Ndelblank{\ww}(g)=\Mdelblank{D(\D)}{0}{\D,\Ø,\ww}(g)$ be the link of $\ww$ in $\Ndel(g)$. 

The proof will be similar to the proof of Prop. \ref{p:conn1}, and we define:
For $x\in T(\D)$ with $x\perp \ww$, let $\Ndelblankx{\ww}(g)$ be the subcomplex of $\Ndelblank{\ww}(g)$ consisting of all simplices whose vertices are in $\xort$. 

\begin{prop}\label{p:putmansver}
Let $\D\in \Ldelpos(g)$ be a $k$-simplex, and $\ww\in \Ndel(g)$ an $m$-simplex. Let $x\in T(\D)$ with $x\perp \ww$ and $\gcdto(x,\ww,S(\D),D(\D))=1$.  Then
\begin{itemize}
  \item[$(i)$]$\pi_n(\Ndelblankx{\ww}(g))=0$ for $n\le g-k-m-4$.
  \item[$(ii)$]$\pi_n(\Ndelblank{\ww}(g))=0$ for $n\le g-k-m-4$.
\end{itemize}
\end{prop}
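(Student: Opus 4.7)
The plan is to adapt the argument of Proposition \ref{p:conn1} to the present setting, running a joint induction on $n$ for both $(i)$ and $(ii)$. At each level $n$, statement $(ii)$ will follow from $(i)$ by the same scheme, but with the symplectic dual $y\in T(\D)$ of some chosen $x$ in place of $x$ itself; since the target complex of $(ii)$ carries no orthogonality-to-$x$ constraint, that deduction is strictly easier, so I concentrate on $(i)$.

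For the base case $n=-1$ the natural candidate vertex $a_1+tb_1+x$ need not be orthogonal to $\D$, so I correct it to $v_0:=a_1+tb_1+x+u$, where $u\in D(\D)$ is the vector supplied by Lemma \ref{r:compensate}. A routine check using $x\in T(\D)$, $x\perp\ww$, $D(\D)\perp T(\D)$, and the gcd hypothesis confirms that $v_0\in\Ndelblankx{\ww}(g)$. For the inductive step, let $S$ be a simplicial $n$-sphere and $f:S\to\Ndelblankx{\ww}(g)$ a simplicial map. Set
\begin{equation*}
R:=\max\bigl\{|\rk^x(\pr_2(f(s)))|:s\in S^{(0)}\bigr\},
\end{equation*}
using a symplectic basis of $T(\D)$ extending $x$. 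If $R=0$, cone off by $v_0$. Otherwise, pick a regular bad simplex $\s\in S$ of maximal dimension (every vertex realising $|\rk^x\circ\pr_2|=R$) and construct a filling ball $B$ with $\dd B=\link(\s)$ and a simplicial map $\f:B\to\Ndelblank{\ww*f(\s)}(g)$ restricting to $f$ on $\dd B$. The construction follows the three-case analysis of Proposition \ref{p:conn1} according to whether $\gcdto(x,f(\s),\ww,S(\D),D(\D))$ equals $1$, is $>1$, or equals $0$; in the last case the projection $h\mapsto h-\ialg{h,y}x$ with $y\in T(\D)$ dual to $x$ feeds into the inductive hypothesis $(ii)$ and is lifted back through a commutative diagram analogous to \eqref{e:phi}.

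The decisive step, and the source of the main obstacle, is the division-with-remainder modification of $\f$. Fix a vertex $\t\in\s$, set $v:=f(\t)$, pick $q_s\in\Z$ with $|\rk^x(\pr_2(\f(s)-q_sv))|<R$ and $q_s=0$ for $s\in\dd B$, and define
\begin{equation*}
\f'(s):=\f(s)-q_s\bigl(\pr_2(v)-u\bigr),
\end{equation*}
where $u\in D(\D)$ is the vector of Lemma \ref{r:compensate} satisfying $a_1+tb_1+u\perp\D$. One verifies in turn: $\rk^{a_1}(\f'(s))=1$ and $\rk^{b_1}(\f'(s))=t$ since $\pr_2(v),u\in H_2$; orthogonality to $\D$ is built into $u$, absorbing the discrepancy flagged in Remark \ref{r:idea}; isotropy and orthogonality to $\ww*f(\s)$, $D(\D)$, and $x$ each reduce to straightforward bilinear calculations using $\f(s)\perp v$, $\f(s)\perp D(\D)$, $D(\D)$ isotropic, and $D(\D)\perp T(\D)$; and the gcd condition (a) persists because $\pr_2(v)-u$ lies in $S(f(\s),D(\D))$. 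A link move to $f$ on $\s$ with $\f'$ then produces a homotopic map with strictly smaller maximal regular bad dimension at level $R$, and iterating drives $R$ to $0$. The genuine difficulty I expect to face is exactly this simultaneous verification: the single correction term $-q_s(\pr_2(v)-u)$ must respect all of the defining constraints of $\Ndelblankx{\ww*f(\s)}(g)$ at once, and the whole construction of $u$ via Lemma \ref{r:compensate} is engineered precisely so that it does.
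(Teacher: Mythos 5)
Your proposal is correct and follows essentially the same route as the paper's proof: the same base-case cone-off vertex $a_1+tb_1+x+u$ with $u$ from Lemma \ref{r:compensate}, the same three-way gcd case analysis with $\tilde x$ replacement and projection to lower genus, and, crucially, the same division-with-remainder correction — your formula $\f'(s)=\f(s)-q_s(\pr_2(v)-u)$ is algebraically identical to the paper's $v_s-q_sv+q_s(a_1+tb_1+u)$. The only minor slip is that the filling map $\f$ should be written with codomain $\Ndelblankx{\ww*f(\s)}(g)$ (the $\xort$-constrained complex) rather than $\Ndelblank{\ww*f(\s)}(g)$, since the division modification merely preserves orthogonality to $x$ rather than creating it; your subsequent verification makes clear this is intended.
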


\begin{proof}We prove $(i)$ inductively, assuming both $(i)$ and $(ii)$ for all $n'<n$ and all $k'$, $m'$ such that $n'\le g-k'-m'-4$. We have given a simplicial $n$-sphere $S$ and a simplicial map $f: S\To \Ndelblankx{\ww}(g)$. Fix a symplectic basis $\mathfrak{X}$ for $H$ extending $x\in T$ with the dual  basis vector $y$ to $x$ also satisfying $y\in T$. Define $R=R_x$ as in \eqref{e:R}.

If $R=0$, let $u\in D(\D)$ with $a_1+tb_1+u\perp \D$ be the element from Lemma \ref{r:compensate}. Then proceed as in Prop. \ref{p:conn1}, except $F(+)=a_1+tb_1+x+u$.

If $R>0$: Call a simplex $\s$ in $S$ regular bad if $\abs{\rk^x(f(s))}=R$ for all vertices $s\in\s$. Let $\s$ be a regular bad simplex of maximal dimension. Then
\begin{equation}\label{e:induction}
    f|_{\link(\s)}:\link(\s) \To \Ndelblankx{\ww*f(\s)}(g).
\end{equation}
But we cannot be sure $x$ satisfies $\gcdto(x,\ww*f(\s),S(\D),D(\D))=1$. If not, there are two possibilities. To ease the notation write $\ww'=\ww*f(\s)$.

We will need the following observations time and again: For $h\in H$, let $h_{T}=\pr_T(h)$ denote the projection of $h$ on $T$. Then
\begin{itemize}
  \item[$(a)$]$\gcdto(\vv,S(\D),D(\D))=\gcdto(\vv_{T})$.
  \item[$(b)$]If $y,v\in D\oplus T$, or if $y\in T$, $v\in H$, then $\ialg{v,y}=\ialg{v_T,y}$.
\end{itemize}
First possibility is $\gcdto(x,\ww',S(\D),D(\D))>1$. Consider the smallest summand $V$ in $T$ containing $\ideal{x,\ww'_T}$. By $(b)$, $V$ is isotropic. By $(a)$, $
\gcdto(x,\ww'_T)=\gcdto(x,\ww',S(\D),D(\D))>1
$, and likewise, $\gcdto(\ww'_T)=1$. This means there is a basis of $V$ of the form $\set{\tilde x, \ww'_T}$. By $(a)$ and $(b)$,
\begin{equation}\label{e:N}
\Ndelblankx{\ww'}(g)=\Ndelblank{\ww'; \ideal{\tilde x}^{\perp}}(g).
\end{equation}
One checks that $\tilde x$ satisfies all the requirements of the Proposition. So we can use $(i)$ by induction on the map in \eqref{e:induction} to obtain $\f$ as in \eqref{e:fgoal2} below.

The second possibility is $\gcdto(x,\ww',S(\D),D(\D))=0$. Actually,
\begin{equation*}
0=\gcdto(x,\ww',S(\D),D(\D))
=\gcd(x,\ww'_2,D(\D)),
\end{equation*}
where the second equality uses Remark \ref{r:gcdlig} along with $\ww'\in \Ndel(g)$. So consider the summand $V=\ideal{\ww_2',D(\D)}$; then $x\in V$. Further, since $\ww'\perp D(\D)$, we get $(\ww_2')^i-(\ww_T')^i\in D(\D)$ (here, $\vv^i$ denotes the $i$th vertex of $\vv$), and thus
\begin{equation*}
V=D(\D)+ \ideal{\ww_2'}=D(\D)\oplus \ideal{\ww'_T}.
\end{equation*}
So as a basis of $V$, we can take a basis of $D(\D)$ along with $x$ and vectors $t_0,\ldots,t_\ell$, such that $\set{x,t_0,\ldots,t_\ell}$ is a basis of $\ideal{\ww'_T}\del T$. We can choose them such that $\rk^x(t_j)=0$. Now for $v\in T$, set
$\bar v=a_1+tb_1+v+u$, where $u\in D(\D)$ is from Lemma \ref{r:compensate} such that $\ialg{\bar v,\D}=\ialg{a_1+tb_1+u,\D}=0$. We consider $(\bar x,\bar t_0,\ldots, \bar t_\ell)$, which is isotropic, since $V$ is easily shown to be isotropic. In fact it is a simplex in $\Ndel(g)$, since
\begin{eqnarray}\label{e:reuse}
    1&=&\gcdto(\ww',S(\D),D(\D))=\gcdto(\ww'_T,S(\D),D(\D))\\
&=&\gcdto(x,t_0,\ldots,t_\ell,S(\D),D(\D))= \gcdto(\bar x,\bar t_0,\ldots, \bar t_\ell,S(\D),D(\D))\nonumber
\end{eqnarray}
The last equality holds since $u\in D(\D)$.
A slight modification of \eqref{e:reuse} along with $(a)$ and $(b)$ shows
$\Ndelblankx{\ww'}(g)=\Ndelblankx{(\bar x,\bar t_0,\ldots, \bar t_\ell)}(g)$.
Completely analogously to the proof of Prop. \ref{p:conn1}, see \eqref{e:phi}, we can then factor  $f|_{\link(\s)}$ as
\begin{equation}\label{e:fgoal2}
    f|_{\link(\s)}:\link(\s)\del B\stackrel{\f}{\To} \Ndelblankx{\ww*f(\s)}(g).
\end{equation}
We now modify $\f$ to a map $\f'$ by performing division with remainder: Let $v=f(t)$ for some fixed vertex $t\in \s$, and write $v_s=\f(s)$ for $s\in B^{(0)}$. By division we obtain $q_s$ such that $\abs{\rk^x(v_s-q_sv)}<\abs{\rk^x(v)}=R$ for all $s\in B^{(0)}$. For $s\in\dd B=\link(\s)$ we take $q_s=0$ so we do not change $\f$ on $\link(\s)$. Let $u\in D(\D)$ be the vector from Lemma \ref{r:compensate} such that $a_1+tb_1+u\perp\D$. We then set, cf. Remark \ref{r:idea},
\begin{eqnarray*}
\f'(s)&=&v_s-q_sv+q_s(a_1+tb_1+u)\\ &=& a_1+tb_1+\pr_2(\f(s)-q_sf(t))+q_su,
\end{eqnarray*}
for $s\in B^{(0)}$. Then $\f'(s)$ is again in $\Ndelblankx{\ww*f(\s)}(g)$, as one checks by using $u\in D(\D)$, $D(\D)$ is isotropic and $x\in T$. Then $\rk^x(\f'(s))=\ialg{\f'(s),y}= \ialg{v_s-q_sv,y}$, since $y$ is the dual basis vector to $x$, and $y\in T$, so $y\perp u$. The result is thus a simplicial map $\f':B\To \Ndelblankx{\ww*f(\s)}(g)$ with $\abs{\rk^x(\f'(s))}<R$ for all $s\in B^{(0)}$. Then we do a link move to $f$ on $\s$ with $\f'$, which produces a map homotopic to $f$, removing $\s$. Iterating this, we obtain $R=0$.

We conclude $(ii)$ from $(i)$ precisely as in the proof of \ref{p:conn1} $(ii)$. 
\end{proof}

\begin{cor}$\Ldelgodeb(g)$ is $(g-k-3)$-connected.
\end{cor}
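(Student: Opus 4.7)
The corollary falls out immediately from chaining Lemma \ref{l:reduce2} with Proposition \ref{p:putmansver}. As noted in Remark \ref{r:idea}, the complex we care about satisfies
\[
    \Ldelgodeb(g) \;=\; \Lblankgodeb{\D}(g) \;=\; \Mdelblank{0}{0}{\�,\�,\D}(g),
\]
so the corollary is exactly the special case of Lemma \ref{l:reduce2}~$(i)$ with $\D_1=\D_2=\�$ and $\D_3=\D$. The hypothesis of the lemma in this boundary case only requires $\D_3\in \Ldelpos(g)$, which is part of our setup (compare Prop.~\ref{p:reduce}~$(iv)$). The lemma then tells us it suffices to establish the corresponding instance of statement~$(iii)$; tracing through the reduction chain $(iii)\Rightarrow(ii)\Rightarrow(i)$ and forming dual summands via Remark~\ref{r:dual}, this amounts to showing that
\[
    \Ndel(g) \;=\; \Mdelblank{D(\D)}{0}{\D,\�,\�}(g)
\]
is $(g-k-3)$-connected for the chosen $k$-simplex $\D\in\Ldelpos(g)$ (together with a fixed choice of dual summand $D(\D)$, which exists because $\gcdto(\D)\ne 0$).

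The plan for the second step is to apply Proposition \ref{p:putmansver}~$(ii)$ with the empty simplex $\ww=\�$, i.e.\ $m=-1$. Under this substitution, $\Ndelblank{\ww}(g)=\Ndel(g)$, and the bound $n\le g-k-m-4$ in the statement becomes exactly $n\le g-k-3$, which is what we need. No new argument is required: Proposition~\ref{p:putmansver}~$(ii)$ is already proved in the preceding subsection, and it applies to any $k$-simplex $\D\in \Ldelpos(g)$.

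The only step requiring care is the bookkeeping at the boundary value $\D_1=\D_2=\�$ of Lemma \ref{l:reduce2}: one should double-check that the dual-summand constructions in the proof of the lemma (via Remark \ref{r:dual}~$(i)$ and $(ii)$) degenerate correctly when the relevant $\D_i$ are empty, and that the "regular bad simplex" arguments remain valid with the empty summand $D(\�)=0$. Since all the conditions defining $\Mdelblank{\,\cdot\,}{\,\cdot\,}{\,\cdot\,}$ in the $\D_1=\D_2=\�$ case simply reduce to the unadorned conditions for $\Lblankgodeb{\D}(g)$, this is a routine verification rather than a genuine obstacle. Once noted, the proof is complete in a single line: Lemma \ref{l:reduce2} plus Proposition \ref{p:putmansver}~$(ii)$ with $\ww=\�$.
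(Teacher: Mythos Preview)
Your proposal is correct and follows essentially the same route as the paper: apply Proposition~\ref{p:putmansver}$(ii)$ with $\ww=\varnothing$ (so $m=-1$) to get that $\Ndel(g)=\Mdelblank{D(\D)}{0}{\D,\varnothing,\varnothing}(g)$ is $(g-k-3)$-connected, and then invoke Lemma~\ref{l:reduce2} (the implication $(iii)\Rightarrow(i)$) with $\D_1=\D_2=\varnothing$, $\D_3=\D$ to conclude the same for $\Mdelblank{0}{0}{\varnothing,\varnothing,\D}(g)=\Ldelgodeb(g)$. Your extra remarks about the degenerate case $\D_1=\D_2=\varnothing$ are appropriate caution but, as you note, routine; the paper's own proof omits them.
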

\begin{proof}Choose a dual summand $D=D(\D^k)$ to $S(\pr_2(\D^k))$. Then by Prop. \ref{p:putmansver}, $N_{\D^k}(g)=\Mdelblank{D}{0}{\D^k,\Ø,\Ø}(g)$ is $(g-k-3)$-connected. By Prop. \ref{p:reduce} $(iii)\Tto(i)$, this implies that $\Mdelblank{0}{0}{\Ø,\Ø,\D^k}(g)=\Ldelgodeb(g)$ is $(g-k-3)$-connected.
\end{proof}

\section{Exactness in the spectral sequence}\label{ss:exact}
Let $H(m)=H_1(F_{m,1};\Z)$ with given symplectic basis $(a_1,b_1,\ldots,a_m,b_m)$. Let $i=1,2$, put $H=H(g+i-1)$, and $H_{\Q}=H\tensor \Q$. Always assume $g\ge 6$.

In this section, we finish the proof of Theorem \ref{t:main}. Recall we must show
	\[E^2_{2,1}(F_{g,1};1)=0,\quad E^2_{2,1}(F_{g-1,2};2)=0.
\]
First we need a more concrete description of the spectral sequence for $q=1$. We shall use the following result of \cite{Berg}, Theorem 3.5.6. See also \cite{Putman3}, Theorem 1.2.

\begin{newthm}[\cite{Berg}]\label{t:Berg}
Let $S$ be a subsurface of $F_{g,1}$, obtained from $F_{g,1}$ by cutting along arcs $\g_1,\ldots,\g_n$, where $(\g_1,\ldots,\g_n)\in C_*(F_{g,1},1)$. Let $\tau_{F_{g,1},S}$ denote the restriction to $\I(F_{g,1},S)$ of the Johnson homomorphism $\tau_{g,1}: \I_{g,1}\To \Ltre H$.  Let $c_j$ be the homology class of $\tilde \g_j$ (see Figure \ref{f:closeup}) in $H_\Q$. Assume the genus of $S$ is at least $3$. Then
\begin{equation*}
    H_1(\I(F_{g,1},S);\Q)\iso \im(\tau_{F_{g,1},S})\tensor \Q \iso \Ltreort{c_1,\ldots,c_n}\del \LtreH.
\end{equation*}
\end{newthm}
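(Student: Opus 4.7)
The strategy is to combine Johnson's classical rational isomorphism $H_1(\I_{g,1};\Q)\iso \LtreH$ for $g\ge 3$ with the analogous theorem applied to the Torelli group $\I(S)$ (applicable since $\genus(S)\ge 3$), using the subsurface inclusion $\iota:S\into F_{g,1}$ to transfer the result to $\I(F_{g,1},S)$.

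First I would pin down the homological identification $\ortQ{c_1,\ldots,c_n}=\im\bigl(\iota_*:H_1(S;\Q)\To H_\Q\bigr)$. Since $(\g_1,\ldots,\g_n)\in C_*(F_{g,1};1)$, the arcs are disjoint and $S$ is connected. By Poincar\'e--Lefschetz duality for the pair $(F_{g,1},S)$, a class in $H_\Q$ lies in $\im(\iota_*)$ iff it pairs trivially under $\ialg{-,-}$ with every simple closed curve that can be pushed off $S$; these curves are spanned in $H_\Q$ by the $\tilde\g_j$ together with $\dd F_{g,1}$ (which is null-homologous), giving the claim.

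Next I would compute the image of $\tau_{F_{g,1},S}\tensor\Q$. Naturality of the Johnson homomorphism under subsurface inclusion provides a commutative square
\begin{equation*}
\xymatrix{
\I(S)\ar[r]^{\iota_*}\ar[d]_{\tau_S} & \I_{g,1}\ar[d]^{\tau_{g,1}}\\
\Ltre H_1(S;\Q)\ar[r]^{\Ltre\iota_*} & \LtreH
}
\end{equation*}
Since $\iota_*\I(S)\del \I(F_{g,1},S)$ and every element of $\I(F_{g,1},S)$ admits a representative supported on $S$, commutativity yields $\im(\tau_{F_{g,1},S})\tensor\Q\del \Ltre\iota_*H_1(S;\Q)=\Ltreort{c_1,\ldots,c_n}$. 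Surjectivity onto this subspace follows from Johnson's surjectivity theorem applied to $\I(S)$: explicit bounding pair maps supported on $S$ realize a spanning set of $\Ltre H_1(S;\Q)$, and pushing forward under $\Ltre\iota_*$ gives all of the orthogonal complement.

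The main obstacle is showing that $\tau_{F_{g,1},S}$ is injective on $H_1(\I(F_{g,1},S);\Q)$, i.e.\ that no rational class in $H_1$ is missed. My plan is to compare $\I(S)$ with $\I(F_{g,1},S)$ via an exact sequence: the natural map $\iota_*:\I(S)\To \I(F_{g,1},S)$ fits into a sequence whose cokernel is generated by boundary-pushing mapping classes of $S$ (which act trivially on $H_\Q$ but nontrivially on $H_1(S)$), and whose Johnson images already lie in $\Ltreort{c_1,\ldots,c_n}$. Combining this with Johnson's rational iso $H_1(\I(S);\Q)\iso \Ltre H_1(S;\Q)$ for $\genus(S)\ge 3$ and a five-lemma argument then yields the desired isomorphism $H_1(\I(F_{g,1},S);\Q)\iso \Ltreort{c_1,\ldots,c_n}$. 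The subtlety lies in bookkeeping the multi-boundary geometry of $S$ and verifying that the boundary-pushing contribution to $H_1(\I(F_{g,1},S);\Q)$ maps consistently into the target.
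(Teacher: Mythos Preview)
The paper does not prove this statement at all: it is quoted verbatim from the literature (van den Berg's thesis, Theorem~3.5.6, with a pointer also to Putman's \emph{The Johnson homomorphism and its kernel}). There is therefore no ``paper's own proof'' to compare your proposal against; the authors use the result as a black box in order to identify the $q=1$ row of their spectral sequence.

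As for your sketch on its own merits: the broad architecture---naturality of the Johnson homomorphism under subsurface inclusion, identifying $\iota_*H_1(S;\Q)$ with $\ortQ{c_1,\ldots,c_n}$, and then deducing the rational isomorphism from Johnson's theorem applied to $S$---is the right shape and is indeed how results of this type are proved in the cited references. But the sketch is not yet a proof. Two places need real work. First, your Poincar\'e--Lefschetz identification of $\im(\iota_*)$ with $\ortQ{c_1,\ldots,c_n}$ is plausible but stated imprecisely: $S$ here is the cut surface, so $\partial S$ is a single circle made of arcs of $\partial F_{g,1}$ and copies of the $\g_j$, and you should say exactly which relative duality you are invoking. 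Second, and more seriously, the injectivity step is where the content lies, and you acknowledge as much. The map $\G(S)\to\G(F_{g,1},S)$ is an isomorphism (cutting along arcs, not curves), but the comparison of $\I(S)$ with $\I(F_{g,1},S)$ is not immediate: an element of $\G(S)$ acting trivially on $H_1(F_{g,1})$ need not act trivially on $H_1(S)$, so the two Torelli groups differ, and your ``five-lemma argument'' needs the actual exact sequence and a computation of the Johnson image of the discrepancy. This is precisely what van den Berg and Putman carry out; if you want to supply a self-contained proof you should consult those sources rather than leave this as a promissory note.
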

\noindent Using this we get from \eqref{e:spectral2},
\begin{equation}\label{e:spectral2a}
    E^1_{p,1}(F_{g,1};1)\iso \bigoplus_{\ww\in\L(g)^{(p-1)}}\Ltreort{\ww}\Tto 0\text{ for } p+1\le g-1.
\end{equation}
Likewise from \eqref{e:spectral3}, using that $F_{g,2}=(F_{g+1,1})_\be$, where $\tilde \be\simeq\be_1$, see Figure \ref{f:beta},
\begin{equation}\label{e:spectral3a}
    E^1_{p,1}(F_{g,2};2)\iso \bigoplus_{\ww\in\Ll(g)^{(p-1)}}\Ltreort{\ww,b_1}\Tto 0\text{ for } p+1\le g.
\end{equation}

To enable us to talk about both cases simultaneously, define 
\begin{equation}\label{e:bi}
	\bi =\left\{
      \begin{array}{ll}
        \Ø,   & i=1, \\
   			b_1,	 & i=2.
      \end{array}
    \right.
\end{equation} 
The differentials $d^1_{p,1}$ have the following description under the isomorphisms \eqref{e:spectral2a} and \eqref{e:spectral3a} above: Let $\dd_j$ denote the $j$th face map in $\L(g)$, i.e. if $\ww=(w_0,\ldots,w_p)$ then $\dd_j\ww=(w_0,\ldots,\hat w_{j},\ldots,w_p)$. Write an element of $E^i_{p,1}$ as $(v,\ww)$ where $v\in \Ltreort{\ww,\bi}$. Then $d^1_{p,1}$ is the linear map given by
\begin{equation}\label{e:diff}
    d^1_{p,1}(v,\ww)= \sum_{j=0}^{p-1}(-1)^j(i_j(v),\dd_j\ww)
\end{equation}
where $i_j:\Ltreort{\ww,\bi}\To \Ltreort{\dd_j\ww,\bi}$ denotes the inclusion. 

\subsection{Morse vector field construction strategy}

From now on, write $E^1_{2,1}=E^1_{2,1}(F_{g,i};i)$. That $E^2_{2,1}=0$ is equivalent to the sequence  $E^1_{1,1}\longleftarrow E^1_{2,1}\longleftarrow E^1_{3,1}$ being exact. We shall use the technique of discrete Morse theory on the chain complex $\set{E^1_{n,1}}_{n\ge 0}$.

\begin{defn}\label{d:morse}Given a chain complex $C_*$ and bases $B_n=\set{b_n^j|j\in J_n}$ of each chain group $C_n$, then a vector field $V_*=\set{V_n}_{n\ge 0}$ on $\C_*$ is for each $n$ a collection of basis vector pairs, $V_n=\set{(b_n^j,b_{n+1}^j)|j\in J'_n}$ where $J_n'\del J_n$ is some subset, satisfying $(i)$ and $(ii)$:
\begin{itemize}
  \item[$(i)$]For each $j\in J'_n$, $d(b^j_{n+1})= b^j_n+\sum_{i\in J_n,i\neq j}c_ib_n^i$.
  \item[$(ii)$](Disjoint pairs): $\set{b_n^j,b_{n+1}^j}\cap \set{b_m^k,b_{m+1}^k}=\Ø$ if $(j,n)\neq (k,m)$.
\end{itemize}
A gradient path for $V_*$ is a sequence $(b_0, b_0')\to (b_1, b_1') \to (b_2,b_2') \to \cdots$, with $(b_j,b_j')\in V_*$, and for each $j$, $b_{j+1}$ has a nonzero coefficient in the basis expansion of $d(b'_{j})$, but $b_{j+1}\neq b_j$. If $V_*$ has no infinite gradient paths, we call it a \emph{Morse vector field}. 

Define $\mathcal R_n=\textup{span}\set{b_n^j|j\in J'_n}$, and $\mathcal C_{n+1}=\textup{span}\set{b_{n+1}^j|j\in J'_n}$, and write $c(b_j^n):=b_j^{n+1}$. For a subspace $A\del C_n$, we say $V_*$ spans $A$, if $A\del \mathcal R_n\oplus \mathcal C_{n}$. 
\end{defn}

We call vectors of $\mathcal R_n$ redundant, and vectors of $\mathcal C_n$ collapsible. The goal is to construct a Morse vector field that spans $E^1_{0,1}$, $E^1_{1,1}$, and $E^1_{2,1}$, i.e. for $n\le 2$, $E^1_{n,1}=\mathcal R_n\oplus \mathcal C_n$. Then it is easy to show that $E^1_{1,1}\longleftarrow E^1_{2,1}\longleftarrow E^1_{3,1}$ is exact.

Both $\L(g)$ and $\Ll(g)$ are multi-simplicial complexes. Given a \multi{} $\L$, and a total ordering $\Or$ on the vertices of $\L$, define $\Or \L$ to be the subcomplex of $\L$ consisting of simplices with vertices in ascending order. Then $\Or\L$ is a simplicial complex. We will first find vector fields on the simplicial complexes $\L^1(g)=\Or \L(g)$ and $\L^2(g)=\Or\Ll(g)$. We write a simplex in $\L^i(g)$ as a set of vectors $\set{v_0,\ldots,v_n}$, which is unambiguous since the order is fixed, to distinguish it from a simplex in $\L(g)$ or $\Ll(g)$.

\begin{defn}Let $i\in\set{1,2}$. Define $E^i_*$ to be the chain complex with chain groups $E^i_n=\bigoplus_{\ww\in \L^i(g)}\Ltreort{\ww,\bi}$, and differential as in \eqref{e:diff}. 
\end{defn}

\begin{rem}
The proofs from section \ref{ss:conn}, Prop. \ref{p:conn1a} and \ref{p:conn2}, work verbatim to give that $\L^1(g)$ and $\L^2(g)$ are $(g-2)$-connected.
\end{rem}

\begin{rem}\label{r:tactics}Given a choice of basis $B=B_n$ of $\mathcal R_n$, each basis vector $z\in B$ is in some $\Ltreort{\ww,\bi}$ for some simplex $\ww\in \L^i(g)^{(n-1)}$. We write $\ww=\simp(z)$. Then by $(i)$ in Def. \ref{d:morse}, we must have $\simp(c(z))=\vv\in \L^i(g)^{(n)}$, where $\ww$ is a face of $\vv$. The are natural inclusions $i_{\ww}:\Ltreort{\ww}\To \LtreH$ for each $\ww$. Then $c(z)\in E^i_{n+1}$ is specified uniquely by requiring that $i_{\ww}(z)=i_{\vv}(c(z))$. So to define the vector field $V_n$, we need only specify the basis $B_n$ of $\mathcal R_n$, and for each basis vector $z\in B_n$ choose $\simp(c(z))\in \L^i(g)$. 
\end{rem}
We need dual vectors, as in Prop. \ref{p:dual}. We choose fixed dual vectors to the vertices $A_1,A_2,A_3, A_4$ as follows:
\begin{equation}\label{e:startdual}
B_1=\left\{
      \begin{array}{ll}
        b_1, & \hbox{for }\L^1(g) \\
        b_1-b_2-b_3-b_4, & \hbox{for }\L^2(g);
      \end{array}
    \right.
\quad B_j=b_j\text{ for } j=2,3,4.
\end{equation}

\begin{lem}\label{l:decompose}
Let $i\in\set{1,2}$. Given a simplex $\ww\in \L^i(g)$, assume there is $\set{w_1,w_2,w_3,w_4}\in \L^i(g)$ with $\ww * \set{w_1,w_2,w_3,w_4}\in\L^i(g)$. Let $u_j$ be a dual vector to $w_j$ with $\ialg{w,u_j}=0$ for each vertex $w\in\ww$ and each $j=1,2,3,4$. Then there is an isomorphism
\begin{eqnarray*}
    \qquad \qquad\Ltreort{\ww,\bi}&\iso&\Ltreort{\ww,w_1,\bi}\oplus \ideal{u_1}\wedge \Lort{2}{\ww,w_2,\bi}\\
    &&\oplus \ideal{u_1,u_2}\wedge\ortQ{\ww,w_3,\bi}\oplus\ideal{u_1,u_2,u_3}.\qquad \qquad\qed
\end{eqnarray*}
\end{lem}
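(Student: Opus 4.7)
The plan is to decompose $V := \ortQ{\ww,\bi}$ iteratively using the symplectic dual pairs $(w_j,u_j)$, and then assemble $\Lambda^3 V$ by bookkeeping how many of $u_1,u_2,u_3$ appear in each piece. First I would check that each $u_j$ lies in $V$. Since $u_j\perp\ww$ by hypothesis and $\ialg{w_k,u_j}=\delta_{kj}$ (inherited from being a dual basis as in Prop. \ref{p:dual}), this is automatic if $i=1$ (where $\bi=\varnothing$). When $i=2$, so $\bi=b_1$, I would replace $u_j$ by $u_j+c_j w_j$ with $c_j=\ialg{b_1,u_j}$; this preserves $\ialg{w_j,u_j}=1$, $\ialg{w,u_j}=0$ for $w\in\ww$, and $\ialg{w_k,u_j}=\delta_{kj}$ (because $w_j$ is isotropic and orthogonal to $\ww$ and the other $w_k$), while securing $\ialg{b_1,u_j}=0$.

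Next, set $V_0=V$ and $V_k=\ortQ{\ww,w_1,\ldots,w_k,\bi}$ for $k=1,2,3$. Since $u_k\in V_{k-1}$ and $\ialg{w_k,u_k}=1$, the linear functional $v\mapsto\ialg{v,w_k}$ on $V_{k-1}$ is split by $u_k$, giving the direct-sum splitting $V_{k-1}=V_k\oplus\Q u_k$. Iterating the standard identity $\Lambda^n(A\oplus\Q u)=\Lambda^n A\oplus u\wedge\Lambda^{n-1}A$ along this chain yields
\begin{equation*}
\Lambda^3 V \;=\; \Lambda^3 V_1 \;\oplus\; u_1\wedge\Lambda^2 V_2 \;\oplus\; u_1\wedge u_2\wedge V_3 \;\oplus\; \Q\cdot(u_1\wedge u_2\wedge u_3).
\end{equation*}

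It remains to rewrite the middle summands in the form the lemma asks. From $\ortQ{\ww,w_2,\bi}=V_2\oplus\Q u_1$ one has $\Lambda^2\ortQ{\ww,w_2,\bi}=\Lambda^2 V_2\oplus u_1\wedge V_2$; wedging on the left by $u_1$ kills the second piece, so $u_1\wedge\Lort{2}{\ww,w_2,\bi}=u_1\wedge\Lambda^2 V_2$. Analogously, $\ortQ{\ww,w_3,\bi}=V_3\oplus\Q u_1\oplus\Q u_2$ gives $u_1\wedge u_2\wedge\ortQ{\ww,w_3,\bi}=u_1\wedge u_2\wedge V_3$. Substituting into the displayed decomposition yields exactly the isomorphism claimed. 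The only delicate step in this proof is the verification $u_j\perp\bi$ in the $i=2$ case handled above; once that is arranged, all that remains is pure exterior-algebra bookkeeping using the fact that $u_i\wedge u_i=0$.
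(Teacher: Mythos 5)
The paper states this lemma without proof (the statement ends with a $\qed$), so there is no argument to compare against; your proof supplies the missing details correctly. Filtering $V=\ortQ{\ww,\bi}$ by the chain $V_0\supset V_1\supset V_2\supset V_3$ of successive orthogonal complements, splitting each step with $u_k$, and iterating $\Lambda^n(A\oplus\Q u)=\Lambda^n A\oplus u\wedge\Lambda^{n-1}A$ is the natural argument, and your bookkeeping of the middle terms (using $\ortQ{\ww,w_2,\bi}=V_2\oplus\Q u_1$ and $\ortQ{\ww,w_3,\bi}=V_3\oplus\Q u_1\oplus\Q u_2$, with $u_j\wedge u_j=0$ absorbing the spurious pieces) is right.

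Your observation about $u_j\perp\bi$ in the $i=2$ case is a genuine clarification: the lemma's stated hypotheses do not force $\ialg{b_1,u_j}=0$, yet this is needed for the summands on the right to be subspaces of $\Ltreort{\ww,b_1}$, which is how the lemma is used in Cor.~\ref{c:morse}. Your replacement $u_j\mapsto u_j+c_jw_j$ works exactly because every vertex $w_j$ of $\L^2(g)$ has $\rk^{a_1}(w_j)=\ialg{w_j,b_1}=1\neq 0$, so $c_j$ can be solved for, and the replacement preserves all other orthogonality and duality relations since $\set{\ww,w_1,\ldots,w_4}$ is isotropic. In the paper's actual application the dual vectors from \eqref{e:startdual} are already chosen orthogonal to $b_1$, so the hypothesis is implicitly satisfied; your version makes the requirement explicit and shows it is always achievable.
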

We call this the decomposition of $\Ltreort{\ww,\bi}$ with respect to $w_1,w_2,w_3,w_4$. (Note it also depends on a choice of dual vectors.)

Using Lemma \ref{l:decompose}, we can define a vector field on $\Ltreort{\ww,\bi}$ as follows:
\begin{cor}\label{c:morse}Given $\ww\in \L^i(g)$. Assume there are $w_1,w_2,w_3,w_4$ with $\ww*\set{w_1,w_2,w_3,w_4}\in \L^i(g)$. Then there is a vector field that spans $\Ltreort{\ww,\bi}$.
\end{cor}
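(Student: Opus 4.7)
The plan is to use the decomposition of Lemma~\ref{l:decompose} and pair basis vectors summand by summand. Fix a basis $\mathcal{B}$ of $\Ltreort{\ww,\bi}$ compatible with the direct sum, so that each $v\in\mathcal{B}$ lies in exactly one of the four summands $\Ltreort{\ww,w_1,\bi}$, $\ideal{u_1}\wedge\Lort{2}{\ww,w_2,\bi}$, $\ideal{u_1,u_2}\wedge\ortQ{\ww,w_3,\bi}$, or $\ideal{u_1,u_2,u_3}$. For each $v$ in the $j$-th summand ($j\in\set{1,2,3,4}$), I pair $(v,\ww)\in E^i_n$ with $(v,\ww*\set{w_j})\in E^i_{n+1}$, using Remark~\ref{r:tactics} to interpret $(v,\ww*\set{w_j})$ as a basis vector of the larger chain group.

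The key observation is that $v$ really does lie in $\Ltreort{\ww*\set{w_j},\bi}$. Writing $v=u_1\wedge\cdots\wedge u_{j-1}\wedge z$ with $z$ in the residual factor (and $v=u_1\wedge u_2\wedge u_3$ for $j=4$), the duality relation $\ialg{w_j,u_\ell}=0$ for $\ell<j$ puts each $u_\ell$ in $\ortQ{w_j}$, while the summand's definition places $z$ in $\ortQ{\ww,w_j,\bi}$ at the appropriate exterior degree. For condition $(i)$ of Def.~\ref{d:morse}, applying the differential \eqref{e:diff} to $(v,\ww*\set{w_j})$ produces exactly one face map that omits $w_j$ and contributes $\pm(v,\ww)$; the remaining faces land over simplices $\vv\neq\ww$, so cannot cancel this term, and a sign adjustment on the collapsible basis vector normalizes the coefficient of $(v,\ww)$ to $1$. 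Condition $(ii)$ is immediate from the directness of the decomposition: each $(v,\ww)$ participates in exactly one pair, and the four collapsible partners sit over four distinct simplices. Since every basis vector of $\Ltreort{\ww,\bi}$ is redundant in its pair, we obtain $\Ltreort{\ww,\bi}\subseteq\mathcal R_n$, so the vector field spans $\Ltreort{\ww,\bi}$.

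The main obstacle I anticipate is arranging the dual vectors $u_1,u_2,u_3$ so that all the orthogonality conditions implicit in the summands hold simultaneously: $\ialg{w_k,u_j}=\de_{jk}$, $u_j\perp\ww$, and, in the case $i=2$, also $u_j\perp b_1$. The first two are part of the hypothesis of Lemma~\ref{l:decompose}; the third can be arranged by applying Prop.~\ref{p:dual} to the isotropic configuration $\ww*\set{w_1,w_2,w_3,w_4}$ inside the symplectic complement of $\ideal{a_1,b_1}$. This bookkeeping, together with the sign conventions needed to make the coefficient in condition $(i)$ equal to $1$, is routine but does not threaten the core construction.
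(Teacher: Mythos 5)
Your approach is essentially identical to the paper's: both invoke Lemma~\ref{l:decompose} to split $\Ltreort{\ww,\bi}$ into four summands and, for each basis vector in the $j$th summand, set $\simp(c(z))=\ww*\set{w_j}$, with Remark~\ref{r:tactics} supplying the collapsible partner. Your extra verification that each such $z$ indeed lies in $\Ltreort{\ww*\set{w_j},\bi}$, and the sign/disjointness checks for Definition~\ref{d:morse}, are correct and fill in details the paper leaves implicit.

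One small imprecision worth noting: you propose arranging $u_j\perp b_1$ (for $i=2$) by applying Prop.~\ref{p:dual} to $\ww*\set{w_1,\ldots,w_4}$ \emph{inside} $\ortQ{a_1,b_1}$, but for $i=2$ every vertex of $\ww*\set{w_1,\ldots,w_4}$ has $\rk^{a_1}=1$ and hence pairs nontrivially with $b_1$, so these vectors do not sit in that complement. The correct move is to apply Prop.~\ref{p:dual} to the \emph{projections} $\pr_2(\ww*\set{w_1,\ldots,w_4})$ inside $H_2=\ortQ{a_1,b_1}$, and then observe that a dual vector to $\pr_2(w_j)$ in $H_2$, orthogonal to $\pr_2(\ww)$ and to $\pr_2(w_k)$ for $k\ne j$, is automatically orthogonal to $\ww$, $w_k$ ($k\ne j$), $a_1$, and $b_1$ in $H$, and pairs correctly with $w_j$. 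This in turn presupposes that the projections are $\Q$-linearly independent (equivalently $\gcdto(\ww*\set{w_1,\ldots,w_4})\ne 0$); this hypothesis is implicit in Lemma~\ref{l:decompose} as well, and is ensured in the paper's later applications by the way the $w_j$ are chosen in Lemma~\ref{l:1234}, so you are no less rigorous than the original on this point — but it is worth being aware that the decomposition's existence is not entirely automatic.
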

\begin{proof}Write $\ww=\set{v_1,\ldots,v_n}$. Choose dual vectors $y_1,\ldots,y_n,u_1,\ldots,u_4$ to $v_1,\ldots,v_n,w_1,\ldots,w_4$; then we can use Lemma \ref{l:decompose} to decompose $\Ltreort{\ww,\bi}$. Choose a basis $B_{w_i}$ of each of the four summands in the decomposition. For $z\in B_{w_i}$, set $\simp(c(z))=\ww*\set{w_i}$. Then set $B_w=B_{w_1}\cup B_{w_2}\cup B_{w_3}\cup B_{w_4}$.
\end{proof}

After having done as in Cor. \ref{c:morse} above, we have defined some collapsible vectors inside $\bigoplus_{j=1}^4\Ltreort{\ww,w_j}$. We will need to make the rest of these four summands redundant:

\begin{lem}\label{l:decomposerest}
Given $\ww\in \L^i(g)$. Assume there are $w_1,w_2,w_3,w_4$ with $\ww*\set{w_1,w_2,w_3,w_4}\in \L^i(g)$. Let $C(w_j)$ be the summands of Lemma \ref{l:decompose},
\begin{eqnarray*}
&&\mathcal C(w_1)=\Ltreort{\ww,w_1,\bi},\qquad\qquad\quad\!\mathcal C(w_2)=\ideal{u_1}\wedge\Lort{2}{\ww,w_2,\bi},\\
&&\mathcal C(w_3)=\ideal{u_1\wedge u_2}\wedge\ortQ{\ww,w_3,\bi},\quad
\mathcal C(w_4)=\ideal{u_1\wedge u_2\wedge u_3}.
\end{eqnarray*}
Define: $\mathcal R(w_1)=0$, $\mathcal R(w_2)=\Ltreort{\ww,w_1,w_2,\bi}$, and
\begin{eqnarray*}
\mathcal R(w_3)&=& \Ltreort{\ww,w_1,w_3,\bi}\oplus\ideal{u_1}\wedge\Lort{2}{\ww,w_2,w_3,\bi},
\\
\mathcal R(w_4)&=&\Ltreort{\ww,w_1,w_4,\bi}\oplus \ideal{u_1}\wedge\Lort{2}{\ww,w_2,w_4,\bi}\\
								&&\oplus \ideal{u_1\wedge u_2}\wedge\ortQ{\ww,w_3,w_4,\bi}.
\end{eqnarray*}
Then, $\Ltreort{\ww,w_j}=\mathcal C(w_j)\oplus \mathcal R(w_j)$.\qed
\end{lem}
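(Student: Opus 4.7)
My plan is to deduce this lemma from a mild variant of Lemma~\ref{l:decompose}, applied once for each $j\in\{1,2,3,4\}$. The key observation is that Lemma~\ref{l:decompose} admits a straightforward truncated form: if $\ww*\{x_1,\dots,x_\ell\}\in\L^i(g)$ for some $0\le\ell\le 4$, with duals $u_1,\dots,u_\ell$ chosen as in that lemma, then the same linear-algebra filtration of $\Lambda^3\ortQ{\ww,\bi}$ by the number of $u_k$'s appearing yields a direct sum decomposition of $\Ltreort{\ww,\bi}$ consisting of the first $\ell$ summands of the list in Lemma~\ref{l:decompose} (plus, when $\ell\ge 4$, also the final summand $\ideal{u_1,u_2,u_3}$). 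The proof is formally identical to that of Lemma~\ref{l:decompose}.

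For fixed $j$, I will apply this variant to the simplex $\ww'=\ww\cup\{w_j\}$ with extra vectors $\{w_k\}_{k<j}$ and duals $\{u_k\}_{k<j}$. To do so legitimately I first invoke Prop.~\ref{p:dual} to arrange that the $u_k$'s form a symplectic dual basis to the $w_k$'s, giving $\ialg{u_k,w_\ell}=\delta_{k\ell}$ in addition to $\ialg{u_k,w}=0$ for $w\in\ww$; then each $u_k$ with $k\neq j$ is orthogonal to $\ww'$, as required. For $j=1,2,3,4$ the variant uses $0,1,2,3$ extras respectively, and the summands of the resulting decomposition of $\Ltreort{\ww',\bi}$ read off exactly as $\mathcal R(w_j)$ (all summands involving some $u_k$ with $k<j$ but no $u_j$) and $\mathcal C(w_j)$ (the unique remaining summand with the maximal number of $u$'s). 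A summand-by-summand comparison with the statement closes the argument. (Since the decomposition naturally lives inside $\Ltreort{\ww',\bi}=\Ltreort{\ww,w_j,\bi}$, I read the LHS of the lemma as $\Ltreort{\ww,w_j,\bi}$; for $i=1$ this coincides with $\Ltreort{\ww,w_j}$ on the nose.)

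The main point where some care is needed is the case $i=2$, where $\bi=b_1$ and $\ialg{w_\ell,b_1}=1$ for all $\ell$, so that the $w_\ell$'s do not sit in $\ortQ{\ww,\bi}$ and \emph{a priori} the $u_k$'s need not either. Before applying the variant I will therefore adjust each dual vector by $u_k\mapsto u_k-\ialg{u_k,b_1}w_k$; this leaves $\ialg{u_k,w}=0$ for $w\in\ww$ and $\ialg{u_k,w_\ell}=\delta_{k\ell}$ undisturbed, since $\ialg{w_k,w}=0$ and $\ialg{w_k,w_\ell}=0$ by the isotropy of $\ww*\{w_1,\dots,w_4\}$, while enforcing $\ialg{u_k,b_1}=0$ and hence $u_k\in\ortQ{\ww,\bi}$. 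After this adjustment the basis filtration runs identically in both cases $i=1,2$, and the remaining effort is the (tedious but mechanical) check that the truncated decomposition matches $\mathcal R(w_j)\oplus\mathcal C(w_j)$ term by term; I expect this bookkeeping, rather than any conceptual step, to be the bulk of the written-out proof.
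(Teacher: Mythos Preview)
Your approach is correct and is exactly what the paper has in mind: the lemma carries a \qed\ at the end of its statement and is left without proof, so the intended argument is precisely the repeated application of Lemma~\ref{l:decompose} that you describe, applied to $\ww'=\ww\cup\{w_j\}$ with extras $w_1,\dots,w_{j-1}$. Your handling of the $i=2$ adjustment $u_k\mapsto u_k-\ialg{u_k,b_1}w_k$ is also correct and a useful remark, as is your observation that the left-hand side should read $\Ltreort{\ww,w_j,\bi}$.

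One small slip to fix: your description of the ``truncated form'' says it yields ``the first $\ell$ summands of the list in Lemma~\ref{l:decompose}'', but with $\ell$ extras the decomposition actually has $\ell+1$ pieces (the last being the top piece $u_1\wedge\cdots\wedge u_\ell\wedge\Lambda^{3-\ell}\ortQ{\ww',\bi}$, for $\ell\le 3$). Your subsequent case-by-case application uses the correct count, so this is only a wording issue in the preamble, not a gap in the argument.
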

\begin{cor}\label{c:morserest}Given $\ww\in \L^i(g)$. Assume there are $w_1,w_2,w_3,w_4$ with $\ww*\set{w_1,w_2,w_3,w_4}\in \L^i(g)$. Then there is a vector field on $E^i_*$ that spans 
$\bigoplus_{j=1}^4\Ltreort{\ww,w_j,\bi}
$.
\end{cor}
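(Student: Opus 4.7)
The plan is to build the desired vector field in two stages, combining Corollary \ref{c:morse} with additional pairings at the 2-simplices $\ww*\set{w_k,w_j}$ for $1 \le k < j \le 4$.

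First, I would apply Corollary \ref{c:morse} at $\ww$ with extending vertices $w_1,w_2,w_3,w_4$. For a redundant basis vector $z\in B_{w_j}\subset\Ltreort{\ww,\bi}$, Remark \ref{r:tactics} determines the collapsible partner $c(z)\in\Ltreort{\ww*\set{w_j},\bi}$ by $i_\ww(z)=i_{\ww*\set{w_j}}(c(z))$ in $\LtreH$. Tracing through the decomposition of Lemma \ref{l:decompose}, one sees that the image of $B_{w_j}$ at simplex $\ww*\set{w_j}$ is exactly a basis of the summand $\mathcal{C}(w_j)\subset\Ltreort{\ww,w_j,\bi}$ in the notation of Lemma \ref{l:decomposerest}. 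Consequently, after this first stage the $\mathcal{C}(w_j)$ part of each $\Ltreort{\ww,w_j,\bi}$ is already contained in $\mathcal{R}_*\oplus\mathcal{C}_*$.

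Second, by Lemma \ref{l:decomposerest} the complement decomposes as $\mathcal{R}(w_j)=\bigoplus_{k=1}^{j-1}\mathcal{R}^k(w_j)$, where the $k$-th summand has the shape $\ideal{u_1\wedge\cdots\wedge u_{k-1}}\wedge\Lambda^{4-k}\ideal{\ww,w_k,w_j,\bi}^{\perp}$. The key observation is that each $\mathcal{R}^k(w_j)$ embeds naturally into $\Ltreort{\ww*\set{w_k,w_j},\bi}$: every dual vector $u_l$ with $l<k$ satisfies $\ialg{u_l,w_k}=\de_{lk}=0$ and lies in $\ideal{\ww,\bi}^{\perp}$, while the remaining wedge factor lies in $\ideal{\ww,w_k,w_j,\bi}^{\perp}$ by construction. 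I would choose a basis of each $\mathcal{R}^k(w_j)$ and, for each basis vector $z$, set $\simp(c(z))=\ww*\set{w_k,w_j}$, so that $z$ (at the 1-simplex $\ww*\set{w_j}$) is redundant with collapsible partner $\hat z$ (its image at the 2-simplex $\ww*\set{w_k,w_j}$).

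To confirm this is a vector field, condition (i) of Definition \ref{d:morse} follows because one of the face maps of $\ww*\set{w_k,w_j}$ in the differential \eqref{e:diff} removes $w_k$ and sends $\hat z\mapsto\pm z$, so $z$ appears with nonzero coefficient in $d(c(z))$; the other face contributions land at different simplices and do not interfere. Condition (ii) (disjoint pairs) follows from the direct-sum decomposition of Lemma \ref{l:decomposerest}: at the shared simplex $\ww*\set{w_j}$, the first-stage collapsible basis lies in $\mathcal{C}(w_j)$ while the second-stage redundant basis lies in the complementary $\mathcal{R}(w_j)$, and the second-stage collapsible vectors live at the pairwise distinct 2-simplices $\ww*\set{w_k,w_j}$. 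The main delicate point is the inclusion $\mathcal{R}^k(w_j)\subset\Ltreort{\ww*\set{w_k,w_j},\bi}$, which rests on the duality and orthogonality properties of the $u_l$'s; everything else is routine bookkeeping.
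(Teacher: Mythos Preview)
Your proof is correct and follows essentially the same approach as the paper: first apply Corollary~\ref{c:morse} to make the $\mathcal C(w_j)$ collapsible, then use the decomposition of Lemma~\ref{l:decomposerest} to pair each piece $\mathcal R^k(w_j)$ with the 2-simplex $\ww*\set{w_k,w_j}$. You have added an explicit check of the vector-field axioms which the paper leaves implicit, but the construction is identical.
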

\begin{proof}
From Cor. \ref{c:morse} we have chosen $\bigoplus_{j=1}^4 \mathcal C(w_j)$ to be collapsible inside $\bigoplus_{j=1}^4\Ltreort{\ww,w_j}$, in the notation of Lemma \ref{l:decomposerest}. This Lemma also gives a decomposition of the rest, allowing us to choose bases $B(w_j)$ for $R(w_j)$ of the form $B(w_j)=B_1(w_j)\cup \cdots \cup B_{j-1}(w_j)$. Here $B_1(w_j)$ is a basis of $\Ltreort{\ww,w_1,w_j}$, $B_2(w_j)$ is a basis of $\ideal{u_1}\wedge \Lort{2}{\ww,w_2,w_j}$, and $B_3(w_4)$ is a basis of $\ideal{u_1\wedge u_2}\wedge\ortQ{\ww,w_3,w_4}$. For each basis vector $b\in B_m(w_j)$ we assign $\simp(c(b))=\ww*\set{w_m,w_j}$.
\end{proof}

The difficulty will be ensuring there are no infinite gradient paths for the constructed vector field. The idea is define filtrations 
\begin{itemize}
  \item $\F^1_\infty\subset \F^1_5 \subset \F^1_4 \subset \F^1_3 \subset \F^1_2 \subset \F^1_1=\L^1(g)$ on $\L^1(g)$.
  \item $\F^2_\infty\subset\F^2_5\subset\F^2_4\subset \F^2_3 \subset \F^2_2 \subset\F^2_1\subset\F^2_0=\L^2(g)$ on $\L^2(g)$.
\end{itemize}
Then for each simplex $\ww$, we will pick such $w_1,w_2,w_3,w_4$ in a smaller filtration than $\ww$, allowing us to argue inductively. We introduce the notation:
\begin{equation}\label{e:start}
    A_1=a_1,\quad A_j=\left\{
                        \begin{array}{ll}
                          a_j, & \hbox{in }\L^1(g) \\
                          a_1+a_j, & \hbox{in }\L^2(g);
                        \end{array}
                      \right. \text{ for }j=2,3,4.
\end{equation}
\begin{defn}
Define $\F_\infty^i$ to be the full subcomplex of $\L^i(g)$ with vertices in  $\set{A_1,A_2,A_3,A_4}$. Define $\F^1_k$ to be the full subcomplex of $\L^1(g)$ with vertices in  $H_k\cup\set{A_1,A_2,A_3,A_4}$, where $H_k=\ideal{a_k,b_k,\ldots, a_g,b_g}\del H$.

To define $\F^2_k$, let $\tilde \F^2_k$ be the full subcomplex of $\L^2(g)$ with vertices in $(a_1+H_k)\cup\set{A_1,A_2,A_3,A_4}$, and set $\F^2_k=\star_{\L^2(g)}(a_1)\cap \tilde\F^2_k$ for $k\ge 2$. For $k=1$ we set $\F_1^2=\Or\Ll_{\gcd=1}(g)\cup \F^2_2$, see Def. \ref{d:gode}.

As a general convention $\F^i_{5+1}=\F^i_\infty$. (We do not call it $\F^i_6$ since it does not agree with the definition of $\F^i_k$ for $k=6$.) 
\end{defn}

\begin{lem}\label{l:1234} Let $i\in\set{1,2}$ and $k\le 5$. To each $s$-simplex $\ww\in \F^i_k\fra \F^i_{k+1}$, we can assign a simplex $\ww^*=\set{w_1,\ldots, w_{g-3+i-s}}\in \F^i_{k+1}$ such that 
\begin{itemize}
	\item[$(i)$] $\ww* \ww^*$ is a simplex in $\F_k^i(g)$,
	\item[$(ii)$] $w_j=A_j$ for $1\le j\le k-1$,
	\item[$(iii)$] If $\vv\del \ww$ with $\vv\in \F_{k+1}^i(g)$, then $\vv*\ww^*\in\F_{k+1}^i(g)$,
	\item[$(iv)$]For $i=2$ and $k=0$, write $\ww=\set{\ww_0,\ldots,\ww_s}\in\L^2(g)$, then $\rk^{b_1}(\ww_0)=\rk^{b_1}(w_j)$ for all $j$.
\end{itemize}
\end{lem}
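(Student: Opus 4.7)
The plan is to construct $\ww^*$ explicitly. Write $\ww=\set{v_0,\ldots,v_s}$ and put $N=g-3+i-s$ for the length of $\ww^*$. The first $k-1$ entries are forced by (ii); the task is to fill in the remaining $N-(k-1)=g-2+i-s-k$ entries $w_k,\ldots,w_N$ with vectors in $H_{k+1}$ (respectively $a_1+H_{k+1}$ when $i=2$) so that $\ww*\ww^*$ is an isotropic basis of a summand of $H$ of rank $g-2+i$.

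The first step is to set up the symplectic data. A short computation shows that $\set{A_1,\ldots,A_{k-1}}$ is isotropic and symplectically orthogonal to every vertex of $\ww$, using that vertices of $\ww$ have trivial $b_j$-coefficient for $j\le k-1$ (they lie in $H_k$ or among $\set{A_1,\ldots,A_4}$). Hence $W=\ideal{\ww,A_1,\ldots,A_{k-1}}$ is an isotropic submodule of $H$, and one checks $\gcd(W)=1$. Prop. \ref{p:dual} then produces a dual summand $D=D(W)$ and a symplectic splitting $H=(W\oplus D)\oplus T$, with $T$ of rank $2(g+i-1-s-k)$.

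Next I fill in the $g-2+i-s-k$ remaining vectors as an isotropic basis of a summand $U$ of $T$, with each vector lying in $H_{k+1}$ (for $i=1$) or $a_1+H_{k+1}$ (for $i=2$). Since $\mathrm{rank}(H_{k+1})=2(g-k)$ while the needed rank is $g-2+i-s-k$, the slack is $s+1\ge 1$, which suffices: I choose an isotropic basis of a summand of $T\cap H_{k+1}$, adjusting each chosen vector by an element of $D$ (to enforce orthogonality to $W$) and, when $i=2$, by $a_1$ (for the $a_1$-rank-one constraint of $\L^2(g)$; this does not disturb orthogonality to $W$ because $A_1=a_1\in W$).

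For the verification: (i) holds because $\ww*\ww^*$ spans the isotropic summand $W\oplus U$ with vertices in $H_k\cup\set{A_1,\ldots,A_4}$ (respectively the $i=2$ analog); (ii) by construction; (iii) because $\ww^*\subseteq H_{k+1}\cup\set{A_1,\ldots,A_4}$ (or the $i=2$ version), so joining with any $\vv\subset\ww$ already in $\F^i_{k+1}$ keeps us in $\F^i_{k+1}$; for the case $i=2, k=0$ the relevant $\gcd_2$ check goes through since $\ww^*$ is arranged to sit in $\Or\Ll_{\gcd=1}(g)$. For (iv), in the case $i=2, k=0$, Prop. \ref{p:reduce}(iv) together with the argument of Lemma \ref{l:gcd0conn} forces every vertex of $\ww$ to share the $b_1$-rank $t=\rk^{b_1}(v_0)$, and I then set the $b_1$-rank of each $w_j$ to $t$ by adding $tb_1$ to its $H_{k+1}$-component (an admissible move, since $b_1\perp H_{k+1}$ and $b_1\perp a_1$).

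The main obstacle is the filling-in step: satisfying simultaneously isotropy in $T$, containment in $H_{k+1}$ (or its $i=2$ shift), orthogonality to $W$, and (for $i=2, k=0$) the matching $b_1$-rank. The bound $g\ge 6$, together with the lemma's implicit restriction on $s$ needed for four vectors to remain so that Cor. \ref{c:morse} applies downstream, provides exactly the slack required to carry out all three adjustments coherently.
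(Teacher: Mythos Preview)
Your outline follows the same skeleton as the paper --- force $w_j=A_j$ for $j\le k-1$, then produce the remaining $w_j$ inside a symplectic complement --- but the execution diverges at the crucial point, and for $i=2$ with $k\le 1$ it does not go through.

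The specific error is the sentence ``this does not disturb orthogonality to $W$ because $A_1=a_1\in W$''. For $k\le 1$ you add no $A_j$'s to $W$ (the range $1\le j\le k-1$ is empty), so $W=\ideal{\ww}$ and $a_1\notin W$ in general. Concretely, a vertex of $\ww\in\L^2(g)$ has the form $v=a_1+rb_1+\pr_2(v)$, and if $t\in T$ then $\ialg{a_1+t,\,v}=\ialg{a_1,v}=r$, which need not vanish. So shifting by $a_1$ genuinely breaks orthogonality to $\ww$. Your earlier phrase ``adjusting each chosen vector by an element of $D$ (to enforce orthogonality to $W$)'' cannot repair this, because vectors chosen in $T$ are already orthogonal to $W$; the adjustment you need is one that corrects for the \emph{subsequent} addition of $a_1+tb_1$, and that is exactly the content of Lemma~\ref{r:compensate} (the existence of $u\in D$ with $a_1+tb_1+u\perp\ww$), which you never invoke.

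The paper avoids the whole issue by working inside $H_m$ with $m=\max(k+1,2)$ from the start: it takes $S=S(\pr_m(\ww))$ and builds $D$ and $T$ inside $H_m$, so $T\subseteq H_{k+1}$ automatically and no intersection $T\cap H_{k+1}$ has to be analysed. For $i=2$, $k\le1$ it then sets $w_j=a_1+tb_1+u+t_j$ with $u$ supplied by Lemma~\ref{r:compensate}; the presence of $u$ is also what makes the $\gcdto$ check for $(iii)$ go through via Remark~\ref{r:gcdlig}. Your argument for $(iii)$ in the $k=0$ case (``$\ww^*$ is arranged to sit in $\Or\Ll_{\gcd=1}(g)$'') is an assertion, not a verification, and without the $u$-correction it is not clear it holds.

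A smaller point: in your discussion of $(iv)$ you appeal to Lemma~\ref{l:gcd0conn} to argue that all vertices of $\ww$ share a common $b_1$-rank. That lemma only applies when $\gcdto(\ww)=0$; for a general $\ww\in\F^2_0\setminus\F^2_1$ the $b_1$-ranks of the vertices of $\ww$ need not agree. Fortunately $(iv)$ does not ask for this --- it only asks that $\rk^{b_1}(w_j)=\rk^{b_1}(\ww_0)$ for the \emph{new} vectors $w_j$ --- so the fix is simply to set $t=\rk^{b_1}(\ww_0)$ and build $w_j=a_1+tb_1+u+t_j$ as above.
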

\begin{proof}
We set $w_j=A_j$ for $1\le j\le k-1$, and now define the remaining $w_j$ for $k\le j\le g-3+i-s$. Set $m=\max(k+1,2)$, and let $S=S(\pr_{m}(\ww))$. Then use Prop. \ref{p:dual} to obtain a dual summand $D$ in $H_{m}$, and a symplectic subspace $T$ such that $(S\oplus D)\oplus T = H_{m}$. We want to choose $w_j$ using the basis vectors $\set{t_1,\ldots,t_n}$ for an isotropic summand of $T$. A dimension count shows $\dim T\ge 2(g-1+i-s-m)$. For $i=1$, we put $w_j=t_j$, and are done. 

Now let $i=2$. We put $w_j=a_1+t_j$ when $k\ge 2$. When $k\le 1$ this may not work since $a_1+t_j$ may not be orthogonal to $\ww$. But by Lemma \ref{r:compensate} we can choose $u\in D$ such that $a_1+tb_1+u\perp\ww$ for $t\in \Z$ as in \eqref{e:t}. Then set $w_j=a_1+tb_1+u+t_j$. $(iii)$ is automatic except for $k=0$, where it follows from Remark \ref{r:gcdlig}, using $\gcdto(\vv)=1$ and the vectors of $\pr_2(\vv)$ are in $S$, while $w_j\in D\oplus T$. $(iv)$ follows from the choice of $t$.
\end{proof}
Note assuming $g\ge 8-i$, we always get (at least) a $4$-simplex $x^*=\set{x_1,x_2,x_3,x_4,x_5}\in \F^i_{k+1}$ assigned to each vertex $x\in\F^i_{k}$. We emphasize that for each vertex $x$ we choose $x^*$ once and for all. 

\subsection{Implementing the general strategy}
\begin{thm}\label{t:vectfield2}Assume $g\ge 8-i$, and let $i\in\set{1,2}$. There is a Morse vector field on $E^i_*$ that spans $E^i_0, E^i_1$, and $E^i_2$.
\end{thm}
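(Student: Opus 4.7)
The plan is to build the vector field inductively along the filtration $\F^i_\infty\subset\F^i_5\subset\cdots\subset\F^i_{2-i}=\L^i(g)$, traversing from the innermost layer outward and applying Corollaries~\ref{c:morse} and~\ref{c:morserest} at each simplex of dimension $\le 1$.

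For the base case I would treat $\ww\in\F^i_\infty$ by hand: since $\F^i_\infty$ consists only of simplices on the four fixed vertices $A_1,\ldots,A_4$, the explicit dual vectors $B_1,\ldots,B_4$ from \eqref{e:startdual} let me apply Lemma~\ref{l:decompose} directly and define all pairs on these finitely many simplices of dimension $\le 1$. For the inductive step, assume the vector field has been built on all simplices with vertices in $\F^i_{k+1}$. For each $\ww\in\F^i_k\setminus\F^i_{k+1}$ of dimension $s\le 1$, Lemma~\ref{l:1234} supplies $\ww^*=\{w_1,\ldots,w_{g-3+i-s}\}\subset\F^i_{k+1}$, which has at least four vertices since $g\ge 8-i$ and $s\le 1$. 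I would apply Corollary~\ref{c:morse} to $(\ww,w_1,w_2,w_3,w_4)$ to make $\Ltreort{\ww,\bi}$ redundant, paired with collapsible partners in $\bigoplus_{j=1}^{4}\Ltreort{\ww*\{w_j\},\bi}$, and then Corollary~\ref{c:morserest} to make the residual parts of those four summands redundant as well. No double assignment arises: every freshly created pair involves the vertex of $\ww$ that lies outside $\F^i_{k+1}$, which no previously processed simplex uses.

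The hard part will be showing that the resulting vector field has no infinite gradient paths. A gradient step carries a pair $(b_j,b_j')$ with $\simp(b_j')=\ww_j*\{w_{j,\alpha}\}$ to a pair $(b_{j+1},b_{j+1}')$, where $b_{j+1}\ne b_j$ is a face term of $d(b_j')$; the face either has $\simp=\ww_j$ (excluded by the disjoint-pairs condition in Definition~\ref{d:morse}) or has the form $\ww_j'*\{w_{j,\alpha}\}$ with $\ww_j'$ a proper face of $\ww_j$. Combining property~(ii) of Lemma~\ref{l:1234} (which pins the first $k-1$ entries of every $\ww^*$ to $A_1,\ldots,A_{k-1}$) with the triangular structure of the decomposition in Lemma~\ref{l:decomposerest} (which organises $\mathcal{R}(w_j)$ strictly in terms of indices $m<j$) should allow me to exhibit a well-founded lexicographic invariant on base simplices --- filtration level first, then the position of the vertex that differs from $A_1,\ldots,A_{k-1}$, then the index $\alpha$ of $w_{j,\alpha}$ --- that strictly decreases along each gradient step. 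Carrying out this bookkeeping rigorously, with a careful case analysis on which face of $\ww_j*\{w_{j,\alpha}\}$ produces $b_{j+1}$ and in which summand of the Lemma~\ref{l:decompose} decomposition it lies, is what I expect to consume the bulk of the work; once termination is established, the fact that $E^i_n=\mathcal{R}_n\oplus\mathcal{C}_n$ for $n\le 2$ follows immediately from the inductive construction.
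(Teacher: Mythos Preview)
Your proposed lexicographic invariant does not actually decrease along gradient paths, and this is not a bookkeeping issue but a genuine obstruction. Take a $1$-simplex $\{x,x'\}$ with $x\in\F^i_k\setminus\F^i_{k+1}$ and $x'\in\F^i_{k+1}$, $x'\notin x^*=\{x_1,\ldots,x_4\}$. Your pairing uses $\{x,x'\}^*=\{w_1',\ldots,w_4'\}$ from Lemma~\ref{l:1234}, so the collapsible partner lives over $\{x,x',w_m'\}$, and a gradient step lands over $\{x,w_m'\}$ or $\{x',w_m'\}$. The latter lies in $\F^i_{k+1}$ and is handled by induction; but $\{x,w_m'\}$ is again a $1$-simplex of exactly the same shape: one vertex $x$ in the outer layer, one in the inner. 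Lemma~\ref{l:1234}(ii) only forces $w_m'=A_m$ for $m\le k-1$; for $m\ge k$ there is no relation whatsoever between $w_m'$ and the $x_j$ attached to the $0$-simplex $x$. So the filtration level stays at $k$, the outer vertex $x$ is unchanged, and the ``position'' and ``index $\alpha$'' coordinates you mention are reset by the new call to Lemma~\ref{l:1234} on $\{x,w_m'\}$. Nothing decreases, and the path can wander indefinitely among simplices $\{x,v\}$ with $v\in\F^i_{k+1}$.

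This is exactly the obstacle that drives the paper's argument. The paper does \emph{not} pair a general $\{x,x'\}$ using $\{x,x'\}^*$. Instead, for each fixed outer vertex $x$ it builds a nested filtration $F_0\subset F_1\subset\cdots$ on $E^i(x)=\bigoplus_{x'}\Ltreort{x,x',\bi}$ (Lemma~\ref{l:Ax}), pairing $(z,x')\in F_j$ with a $2$-simplex $\{x,x',x''\}$ where $(z,x'')$ already lies in $F_{j-1}$. Termination of gradient paths is then built in, but the price is that one must show this filtration exhausts $E^i(x)$ (Prop.~\ref{p:A=E} and Lemma~\ref{l:A=E}). That is the real content: one constructs specific bases of $\Ltreort{x,x',\bi}$ (Lemma~\ref{l:basis}, Remark~\ref{r:basis}) and then transports each basis vector from $\{x,x_j\}$ to $\{x,x'\}$ along paths in auxiliary complexes $\Lconn{i}(g)$, whose connectivity (Lemma~\ref{l:smh}, Cor.~\ref{c:smh}) is proved separately and is where most of the effort in Section~\ref{ss:exact} goes. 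Your plan has no analogue of this transport mechanism, and without it the Morse condition cannot be verified.
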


Proving this will will the aim of this section. We first show how to extend Theorem \ref{t:vectfield2} to $\L(g)$ and $\Ll(g)$:

\begin{prop}\label{p:order} Let $i\in\set{1,2}$ and $g\ge 8-i$. Assume that we have a Morse vector field on $E^i_*$ spanning $E^i_0, E^i_1$, and $E^i_2$. 
Then there exists a Morse vector field on $E^1_{*,1}$ spanning $E_{0,1}^1$, $E_{1,1}^1$ and $E_{2,1}^1$.
\end{prop}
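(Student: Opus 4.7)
The plan is to extend the given Morse vector field on $E^i_*$ by constructing additional pairings for the summands of $E^1_{*,1}$ indexed by non-ascending orderings. To organize this, I would introduce the antisymmetrization chain map $\pi: E^1_{*,1}\to E^i_*$ sending each summand indexed by an ordered simplex $\ww$ to the summand indexed by its ascending reordering $\ww^0$, with sign given by the sorting permutation $\sigma_\ww$; a routine computation (the same one used to compare ordered and alternating chain complexes of a simplicial complex) shows $\pi$ is a chain map. The obvious inclusion $s: E^i_*\hookrightarrow E^1_{*,1}$ of ascending-indexed summands satisfies $\pi\circ s=\id$, so $E^1_{*,1}=s(E^i_*)\oplus\ker(\pi)$ as chain complexes.

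On $s(E^i_*)$ the given Morse vector field transfers directly via $s$ and spans the first three degrees. What remains is to span $\ker(\pi)$ in degrees $1$ and $2$; note $\ker(\pi)_0=0$. Via the projection $\id - s\pi$, the generators of $\ker(\pi)_p$ for $p\in\{1,2\}$ correspond bijectively to the non-ascending ordered $p$-simplices in $\L(g)$ (resp.~$\Ll(g)$). For each such $\ww=(w_0,\ldots,w_p)$, I would pair its $\ker(\pi)$-generator with a $(p+1)$-simplex obtained from $\ww$ by inserting an extra vertex $A(\ww)$ via the $w^*$-mechanism of Lemma~\ref{l:1234}, placing $A(\ww)$ near the first inversion position of $\ww$. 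The face-map formula $d\vv=\sum_k (-1)^k\dd_k\vv$ then guarantees that $\ww$ appears in the boundary with coefficient $\pm 1$, giving a valid Morse pair.

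The main obstacle I anticipate is ruling out infinite gradient paths in the combined vector field. I would handle this with a lexicographic filtration on $E^1_{*,1}$: first by the level $k$ of the filtration $\F^i_k$ in which the ascending reordering $\ww^0$ sits (this is the filtration already controlling the Morse condition for the original vector field on $E^i_*$), and second by the number of inversions of $\ww$ within each level. The delicate verification is that for each new pair $(\ww,\vv)$, all the non-$\ww$ faces of $\vv$ lie strictly earlier in this lexicographic order; this should follow from the canonical structure of $A(\ww)$ produced by the $w^*$-mechanism, since the non-$\ww$ faces of $\vv$ either still contain $A(\ww)$ (and hence project under $\ww\mapsto\ww^0$ to a simplex in a strictly smaller $\F^i_k$) or arise by face operations that do not increase the inversion count.
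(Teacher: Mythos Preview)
Your splitting $E^1_{*,1}=s(E^i_*)\oplus\ker(\pi)$ is a reasonable organizational device, but the core step of your construction has a genuine gap. You write that for each non-ascending $\ww$ you will ``pair its $\ker(\pi)$-generator with a $(p+1)$-simplex obtained from $\ww$ by inserting an extra vertex $A(\ww)$''. This treats the summand indexed by $\ww$ as one-dimensional, but it is $\Ltreort{\ww,\bi}$, which for the relevant values of $g$ has dimension in the hundreds. A single inserted vertex $A(\ww)$ can only absorb the subspace $\Ltreort{\ww,A(\ww),\bi}\subsetneq\Ltreort{\ww,\bi}$, since to lift $z\in\Ltreort{\ww,\bi}$ to a basis vector over the enlarged simplex one needs $z\perp A(\ww)$ as well. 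So one insertion vertex per ordered simplex cannot possibly span the whole summand, and condition $(i)$ of Definition~\ref{d:morse} fails for most basis vectors. This is precisely why the paper invokes Lemma~\ref{l:decompose}: it splits $\Ltreort{x,y,\bi}$ into four pieces, each of which lives in $\Ltreort{x,y,w_j,\bi}$ for a different $w_j\in\{w_1,w_2,w_3,w_4\}$ coming from Lemma~\ref{l:1234}, and then pairs piece $j$ with the simplex $(\Or(x,w_j),y)$.

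Your proposed Morse-condition check also needs repair. The ``number of inversions'' secondary filtration does not obviously decrease along gradient paths: for a $1$-simplex there is only one non-ascending ordering, so the inversion count gives no room to descend, and for the faces of the inserted $2$-simplex nothing in your setup forces the inversion count down. The paper's argument is different and more structural: having chosen $w_z\in\F^i_{k+1}$, the two faces of $(\Or(x,w_z),y)$ other than $(x,y)$ are $\Or(x,w_z)$, which is ascending and hence already handled by the assumed vector field on $E^i_*$, and $(w_z,y)$, whose ascending reordering lies in $\F^i_{k+1}$ by Lemma~\ref{l:1234}$(iii)$ and is therefore handled by induction on $k$. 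One must also verify that distinct $(x,y)$ never compete for the same $2$-simplex as partner, which the paper checks by hand. None of this is captured by an inversion count.
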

\begin{proof}
For a simplex $\ww\in\L(g)$, we write $\Or(\ww)\in \L^i(g)$ for the simplex with the same vertices placed in ascending order and $\streg\Or(\ww)$ for the descending order. We proceed by induction in the filtration degree $k$ to show that given a simplex $(x,y)=\streg\Or(x,y)$ with $(y,x)\in\F^i_k$ we can extend the vector field by a basis for the summand $\Ltreort{x,y,\bi}$ indexed by $(x,y)$.

Let $(y,x)\in \F^i_{k}(g)$, and assume one of $x$ and $y$, say $y$, is in $\F^i_{k+1}(g)$. By Lemma \ref{l:1234}, we get a 4-simplex $(y,x)^*=\Or(w_1,w_2,w_3,w_4)\in \F^{i}_{k+1} $ and by Lemma \ref{l:decompose} we get the basis $B(x,y)$ for $\Ltreort{x,y,\bi}$ such that for each $z\in B(x,y)$ there is $w_z=w_j$ for a $j\in\set{1,2,3,4}$ with $z\in \Ltreort{x,y,w_{z},\bi}$. Then let $\simp(c(z))=(\Or(x,w_z),y)$. The other two differentials from $(\Or(x,w_z),y)$ land in summands indexed by, respectively, $\Or(x,w_z)\in E^i_{2}$, and $(w_z,y)$ with $\Or(w_z,y)\in\F^i_{k+1}$ by Lemma \ref{l:1234} $(iii)$. So we create no infinite gradient paths, in the first case by the assumption of the lemma, and in the second case by induction in $k$.

We must show that the vectors $c(z)$ thus defined are linearly independent. 
The only 1-simplices besides $(x,y)$ which could get paired with $(\Or(x,w_z),y)$ in the vector field are ${\Or}(x,w_z)$ and $(w_z,y)$. The former is in $\L^i(g)$ so its partner is likewise in $\L^i(g)$ by assumption, and by inspection, it is easy to see that the latter cannot get paired with $(\Or(x,w_z),y)$ by the method above.

To finish the induction step, assume $x,y\in \F^i_k\fra \F^i_{k+1}$, and proceed exactly as above. The simplices ${\Or}(x,w_z)$ and $(w_z,y)$ now have one vertex in $\F^i_{k+1}$, and so are dealt with above. The induction start deals with $x,y$ among $A_1,\ldots,A_4$ by a similar method, using as $\set{w_1,\ldots,w_4}$ the four remaining among $A_1,\ldots,A_6$. 
\end{proof}
As a corollary, for $g\ge 7$ we have $E^2_{2,1}(F_{g,1};1)=0$ and $E^2_{2,1}(F_{g-1,2};2)=0$, which finishes the proof of the Main Theorem \ref{t:main}.

So we must prove Theorem \ref{t:vectfield2}. The easy part is degree 0 and 1:
\begin{prop}\label{l:C2}Let $i\in\set{1,2}$. There is a Morse vector field on $E^i_*$ that spans $E^i_0, E^i_1$, and where $\mathcal C_2$ is given by $\mathcal C_2=\mathcal C_2(\infty)\oplus \mathcal C'_2$. 

Here, $\mathcal C_2(\infty)= \mathcal R(A_2)\oplus  \mathcal R(A_3)\oplus  \mathcal R(A_4)$ in the notation of Lemma \ref{l:decomposerest} with $\ww=\bi$.
And letting $x^*=\set{x_1,x_2,x_3,x_4}$ as in Lemma \ref{l:1234}, 
\begin{equation*}
\mathcal C'_2=\hspace{-0.6cm}\bigoplus_{\substack{x\in \L^i(g)^{(0)}\\ x\notin\set{A_1,\ldots,A_4}}}\hspace{-0.6cm}\Ltreort{x,x_1,\bi}\oplus \ideal{y_1}\wedge\Lort{2}{x,x_2,\bi} \oplus \ideal{y_1\wedge y_2} \wedge \ortQ{x,x_3,\bi}\oplus \ideal{y_1\wedge y_2\wedge y_3}
\end{equation*}
where $y_j$ denotes a choice of dual vector to $x_j$.
\end{prop}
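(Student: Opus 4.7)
The plan is to build the vector field in two stages, using the decomposition tools from Lemmas~\ref{l:decompose} and~\ref{l:decomposerest} together with Corollaries~\ref{c:morse} and~\ref{c:morserest}.

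\textbf{Stage A} (base at the empty simplex): apply Corollary~\ref{c:morserest} with $\ww=\emptyset$ and $w_j=A_j$ for $j=1,2,3,4$; this is permitted because \eqref{e:start} ensures $\{A_1,A_2,A_3,A_4\}\in\L^i(g)$. Stage~A produces two families of pairs. First, all of $E^i_0=\Ltreort{\bi}$ becomes redundant and pairs via inclusion with the collapsibles $\mathcal C(A_j)\subseteq\Ltreort{A_j,\bi}$ in $E^i_1$ given by Lemma~\ref{l:decomposerest}. Second, the remaining summands $\mathcal R(A_j)\subseteq\Ltreort{A_j,\bi}$ become redundant, paired with collapsibles lying, according to the proof of Corollary~\ref{c:morserest}, in the $\Ltreort{A_m,A_j,\bi}$ summands of $E^i_2$ for various $m<j$. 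These collapsibles in $E^i_2$ are by construction exactly the image under $c$ of $\mathcal R(A_2)\oplus\mathcal R(A_3)\oplus\mathcal R(A_4)$, namely $\mathcal C_2(\infty)$.

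\textbf{Stage B} (base at each non-special vertex): for every vertex $x\in\L^i(g)^{(0)}$ with $x\notin\{A_1,\ldots,A_4\}$, use Lemma~\ref{l:1234} to select a $4$-simplex $x^{*}=\{x_1,x_2,x_3,x_4\}$ in a strictly higher filtration level with $\{x\}*x^{*}\in\L^i(g)$. Applying Corollary~\ref{c:morse} with $\ww=\{x\}$ and $w_j=x_j$ makes $\Ltreort{x,\bi}$ redundant, paired with collapsibles lying in the four subspaces
\begin{equation*}
\Ltreort{x,x_1,\bi}\,\oplus\,\ideal{y_1}\wedge\Lort{2}{x,x_2,\bi}\,\oplus\,\ideal{y_1\wedge y_2}\wedge\ortQ{x,x_3,\bi}\,\oplus\,\ideal{y_1\wedge y_2\wedge y_3}
\end{equation*}
of $E^i_2$, where $y_j$ is a chosen dual vector to $x_j$. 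Summed over all such $x$, these collapsibles form $\mathcal C'_2$. The disjointness condition of Definition~\ref{d:morse}(ii) is transparent within each stage; across the two stages it holds because Stage~B uses only edges $\{x,x_j\}$ with $x\notin\{A_1,\ldots,A_4\}$, whereas Stage~A uses only edges inside $\{A_1,\ldots,A_4\}$. Moreover, for two distinct non-special vertices $x_a,x_b$ one cannot have simultaneously $x_a\in x_b^{*}$ and $x_b\in x_a^{*}$, since Lemma~\ref{l:1234} would force each of their filtration indices to strictly exceed the other's. After Stages~A and B the vector field spans $E^i_0$ and $E^i_1$, and $\mathcal C_2=\mathcal C_2(\infty)\oplus\mathcal C'_2$.

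The main obstacle is verifying the Morse property, i.e.\ the absence of infinite gradient paths. Between $E^i_0$ and $E^i_1$ this is immediate: the only pair $(z,c(z))$ with $z\in\Ltreort{\bi}$ and $c(z)\in\mathcal C(A_j)$ satisfies $d(c(z))=\pm z$ by the single face map $\{A_j\}\to\emptyset$, so the condition $b_1\ne b_0$ kills the path at once. Between $E^i_1$ and $E^i_2$ a redundant vector is either a Stage~B vector in $\Ltreort{x,\bi}$ with $x\notin\{A_1,\ldots,A_4\}$, or a Stage~A vector in $\mathcal R(A_j)$, and in either case $d$ of its collapsible partner returns $\pm$ itself in one $E^i_1$ summand and a component indexed by the \emph{other} vertex of the edge (namely $x_j$ or $A_m$ respectively). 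The path extends only when this latter component meets a redundant vector; this forces the next indexing vertex to be either strictly deeper in the filtration (Stage~B transitioning to Stage~B, or Stage~B transitioning to Stage~A when $x_j=A_m$) or, once inside $\F^i_\infty$, to have strictly smaller $A$-index (Stage~A transitioning to Stage~A, since $c(z)\in\Ltreort{A_m,A_j,\bi}$ with $m<j$). The filtration has finitely many levels and $\mathcal R(A_1)=0$, so this process terminates in finitely many steps, proving that the vector field is Morse.
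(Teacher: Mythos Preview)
Your proof is correct and follows essentially the same approach as the paper's: the paper also applies Cor.~\ref{c:morse} at $\ww=\emptyset$ with $w_j=A_j$ for degree~$0$, then Cor.~\ref{c:morserest} for the $A_j$-summands and Cor.~\ref{c:morse} with $x^*$ for the remaining vertices in degree~$1$, and rules out infinite gradient paths by the same filtration-then-$A$-index descent (obtaining an explicit bound of~$9$ on path length). Your packaging of the first two steps into a single invocation of Cor.~\ref{c:morserest} at $\ww=\emptyset$ is a cosmetic difference only.
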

\begin{proof}
In degree $0$, we have $E^i_{0}=\Ltreort{\bi}$. We use Cor. \ref{c:morse} on $\ww=\bi$ with $w_j=A_j$ and $u_j=B_j$ for $j=1,2,3,4$, see \eqref{e:start} and \eqref{e:startdual}. This yields a vector field that spans $E^i_0$, and no gradient paths at all, since $d=\dd_0$.

Now consider degree 1. $\mathcal C_1$ can be read off from Lemma \ref{l:decompose} (here, $i=1$):
\begin{eqnarray*}
 \mathcal C_1&=& \Ltreort{A_1}\oplus \ideal{B_1}\wedge\Lort{2}{A_2}\oplus \ideal{B_1 \wedge B_2}\wedge \ortQ{A_3}\oplus \ideal{B_1 \wedge B_2 \wedge B_3} \nonumber\\
    &\del&  \Ltreort{A_1}\oplus\Ltreort{A_2}\qquad\;\;\;\:\oplus  \Ltreort{A_3}\qquad\quad\;\;\;\;\,\oplus \Ltreort{A_4}
\end{eqnarray*}
Consequently, $\mathcal R_1$ has to be the rest of $E^i_{1}=\bigoplus_{x\in \L^i(g)^{(0)}}\Ltreort{x,\bi}$.

For $A_1,A_2,A_3,A_4$, we use Cor. \ref{c:morserest}. In general, for each $x\in \L^i(g)^{(0)}$, $x\neq A_1,A_2,A_3,A_4$, we choose the vector field on $\Ltreort{x,\bi}$ as in Cor. \ref{c:morse}, using the four vectors of the simplex $x^*=\set{x_1,x_2,x_3,x_4}$ provided by Lemma \ref{l:1234}. Then we get a vector field spanning $E^i_1$ with $\mathcal C_2$ as stated.

We now argue why there are no infinite gradient paths starting in $\mathcal R_1$. By construction, if there is $(b,b')\to (d,d')$ where $\simp(b)=x$, then $\simp(d)=x_j$ for some $j=1,2,3,4$. We have ensured that if $x\in \F^i_k$ then $x_j\in \F^i_{k+1}$, and if $k=5$, then $x_j=A_j$. Thus any gradient path reaches some $A_j$ ($j=1,2,3,4$) in at most five steps. And if $\simp(b)=A_j$, by construction either $d\in \mathcal C_1$, or $\simp(d)=A_k$ with $k<j$. We see any gradient path has length $\le 9$.
\end{proof}

Now we will extend the vector field defined in Prop. \ref{l:C2} to degree $2$ by induction in the filtration $\F^i_k$. The induction assumption is $A(k)$:
\begin{description}
  \item[$A(k)$:] There is a Morse vector field $V^i_*$ on $E^i_*$ that spans  $E^i_0, E^i_1$, and $E^i_{2}(k):= \bigoplus_{\ww\in \F^i_k(g)^{(1)}}\Ltreort{\ww,\bi}$, for $i=1,2$.
\end{description}

Using Cor. \ref{c:morserest}, it is easy to prove the induction start $A(\infty)$. We now inductively assume $A(k+1)$, $k\in \set{1,2,3,4,5}$ (interpreting $5+1$ as $\infty$), and we wish to prove $A(k)$. This will be done in the next lemmas.

\begin{lem}\label{l:i-start}We can extend the Morse vector field $V^i_2$ so it spans
\begin{equation}\label{e:Ekstart}
    \bigoplus_{x\in \F_k^i(g)^{(0)}}\bigoplus_{j=1}^4\Ltreort{x, x_j,\bi}\del E^i_2(k).
\end{equation}
\end{lem}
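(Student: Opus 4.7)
The plan is to add the new summands $\Ltreort{x,x_j,\bi}$ one source vertex at a time, using Cor. \ref{c:morserest} as the workhorse, and then to verify that the enlarged vector field has no infinite gradient paths.

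First, for $x\in \F^i_k\cap \F^i_{k+1}$, the assigned simplex $x^*=\{x_1,\ldots,x_4\}$ already lies in $\F^i_{k+1}$ by Lemma \ref{l:1234} and the choice made once and for all for $x$; hence $\{x,x_j\}\in \F^i_{k+1}$ and the summands $\Ltreort{x,x_j,\bi}$ are already spanned by the induction hypothesis $A(k+1)$. So I only need to work with $x\in \F^i_k\setminus \F^i_{k+1}$. For each such $x$ I apply Cor. \ref{c:morserest} with $\ww=\{x\}$ and $w_j=x_j$ (choosing dual vectors $y_1,\ldots,y_4$ to $x_1,\ldots,x_4$ orthogonal to $x$). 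This prescribes a basis decomposition
\[\Ltreort{x,x_j,\bi}=\mathcal C(x_j)\oplus \mathcal R(x_j),\]
where $\mathcal C(x_j)$ was already made collapsible in Prop. \ref{l:C2} (paired with sources in $\Ltreort{x,\bi}$), and each basis vector $z\in \mathcal R(x_j)$ is newly paired with $c(z)$ in the 2-simplex summand $\Ltreort{x,x_m,x_j,\bi}$ for a unique $m<j$ (determined by which piece $B_m(x_j)$ of $\mathcal R(x_j)$ the vector $z$ sits in). Well-definedness of the enlarged pairing is easy: the target summands $\Ltreort{x,x_m,x_j,\bi}$ contain the vertex $x\notin \F^i_{k+1}$, so they were untouched by $V^i_*$ under $A(k+1)$; and for a given $x$ the targets are all distinct by Cor. \ref{c:morserest}, while different $x$'s yield disjoint targets because $x$ lies outside $\F^i_{k+1}$ whereas $x_m,x_j$ lie in $\F^i_{k+1}$.

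The main obstacle, and the point requiring care, is ruling out infinite gradient paths. A gradient step from a newly created pair $(z,c(z))$ with $z\in \Ltreort{x,x_j,\bi}$ and $c(z)\in \Ltreort{x,x_m,x_j,\bi}$ can continue only through the two remaining 1-simplex faces of the supporting 2-simplex, namely $\{x_m,x_j\}$ or $\{x,x_m\}$. The face $\{x_m,x_j\}$ lies entirely in $\F^i_{k+1}$, so any continuation proceeds inside the Morse vector field supplied by $A(k+1)$ and terminates by hypothesis. The face $\{x,x_m\}$ is newly handled by the present step; here the continuation is forced into $\Ltreort{x,x_{m'},x_m,\bi}$ with $m'<m$, so the second index strictly decreases along the path. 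After at most three such steps one reaches $\Ltreort{x,x_1,\bi}$, where $\mathcal R(x_1)=0$ by Lemma \ref{l:decomposerest}, and the path stops. Combining these two observations, every gradient path has bounded length, so the extended $V^i_2$ is Morse on the claimed enlarged span.
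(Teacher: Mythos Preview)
Your proof is correct and follows the same strategy as the paper: apply Cor.~\ref{c:morserest} vertex by vertex and control gradient paths via the filtration. You are in fact more careful than the paper on two points---separating out the case $x\in\F^i_{k+1}$ (where $A(k+1)$ already applies) and checking explicitly that the new target $2$-simplices $\{x,x_m,x_j\}$ are disjoint from those used under $A(k+1)$ (because all previously used targets have every vertex in $\F^i_{k+1}$). Your termination argument, combining the drop into $\F^i_{k+1}$ on one face with the strictly decreasing index $m$ on the other, is exactly the mechanism the paper invokes by pointing back to Prop.~\ref{l:C2}.
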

\begin{proof}Note that \eqref{e:Ekstart} is precisely the part of $E^i_2(k)$ which intersects $\mathcal C_2$ non-trivially. We know $\mathcal C_2$ from Prop. \ref{l:C2}, $\mathcal C_2= \mathcal C'_2\oplus \mathcal C_2(\infty)$. The vector field on the summands of $E^i_{2}(k)$ intersecting $\mathcal C_2(\infty)$ is the induction start, so we must extend it to the summands intersecting $\mathcal C'_2$. Apply Cor. \ref{c:morserest} for each $x\in\F_k^i(g)^{(0)}\fra \set{A_1,\ldots,A_4}$ to construct the vector field. There are no infinite gradient paths by induction in $k$, precisely as in the proof of Prop. \ref{l:C2}.
\end{proof}

For now, take a fixed vertex $x\in \F^i_k\fra \F^i_{k+1}$. Define $E^i(x)\del E^i_2(k)$ by
\begin{equation}\label{e:Ex}
    E^i(x)=\bigoplus_{x'\in \F^i_{k+1}(g)^{(0)}:\set{x,x'}\in \F^{i}_{k}(g)}\Ltreort{x,x',\bi}.
\end{equation}
We will choose a basis of $E^i(x)$ in the next lemmas. Our method does not choose a basis of each summand $\Ltreort{x,x',\bi}$ at a time, instead mixing them up; this is so that we can argue there are no infinite gradient paths.

\begin{lem}\label{l:Ax}For a fixed vertex $x\in \F^i_k\fra \F^i_{k+1}$, there is a Morse vector field extending $V^i_*$ that spans a subset $A^i_x\del E^i(x)$ ($A^i_x$ is defined in the proof).
\end{lem}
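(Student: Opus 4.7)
I will define $A^i_x$ as the direct sum $\bigoplus \Ltreort{x, x', \bi} \subset E^i(x)$ over those $x' \in \F^i_{k+1}(g)^{(0)}$ with $\set{x, x'} \in \F^i_k(g)$, $x' \notin \set{x_1, \ldots, x_4}$, and such that $\set{x, x', x_1, x_2, x_3, x_4}$ is a $5$-simplex of $\F^i_k(g)$. Here $x^* = \set{x_1, \ldots, x_{g-3+i}}$ is the simplex assigned to $x$ by Lemma~\ref{l:1234}; since $g \ge 8-i$ there are at least $5$ vertices available. The summands $\Ltreort{x, x_j, \bi}$ for $j = 1,\ldots,4$ are excluded because they were already spanned in Lemma~\ref{l:i-start}; summands where the $5$-simplex condition fails will be dealt with in the subsequent lemma via a different auxiliary choice.

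For each admissible edge $\set{x,x'}$, I use Prop.~\ref{p:dual}~$(iv)$ to choose dual vectors $y_1, \ldots, y_4$ to $x_1, \ldots, x_4$ with $\ialg{x, y_j} = \ialg{x', y_j} = 0$ and $\ialg{x_k, y_j} = \de_{jk}$, then apply Lemma~\ref{l:decompose} to $\ww = \set{x, x'}$, $w_j = x_j$, $u_j = y_j$ to decompose $\Ltreort{x, x', \bi}$ into four explicit summands. As in Cor.~\ref{c:morse}, I declare each basis vector $z$ in the $j$-th summand redundant, with collapsible partner $c(z) \in \Ltreort{x,x',x_j,\bi}$, so $\simp(c(z)) = \Or\set{x, x', x_j}$. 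Distinct $(x', j)$ give distinct $3$-simplices, and $x' \notin \set{x_1, \ldots, x_4}$ separates these from the $3$-simplices $\Or\set{x, x_m, x_j}$ assigned in Lemma~\ref{l:i-start}, so the disjoint-pairs requirement of Def.~\ref{d:morse} is met.

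The central content is that no infinite gradient paths arise. From such a $z$, the path can only continue via face components of $d(c(z))$ other than $\set{x, x'}$; the $\set{x, x'}$-face contributes only $\pm z$ back, by the direct-sum structure of Lemma~\ref{l:decompose}. The face $\set{x', x_j}$ lies in $\F^i_{k+1}(g)$ (both endpoints do), so the induction hypothesis $A(k+1)$ bounds any continuation from there. The face $\set{x, x_j}$ was handled in Lemma~\ref{l:i-start}; from the construction there, any path through $\set{x, x_j}$ descends strictly in the index $j$ and terminates at $\set{x, x_1}$, whose entire summand $\Ltreort{x, x_1, \bi}$ is collapsible by Cor.~\ref{c:morserest}.

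The main obstacle I anticipate is the dependence of the dual vectors $y_1, \ldots, y_4$ on $x'$: Prop.~\ref{p:dual} produces them for each admissible $\set{x, x'}$, but not in a way that is obviously uniform across all edges. This forces a careful edge-by-edge verification that the decomposition of $\Ltreort{x, x', \bi}$ provided by Lemma~\ref{l:decompose} is well-defined and that the collapsible partners assigned at different edges remain pairwise disjoint; the latter reduces to the observation above that the $3$-simplices $\Or\set{x, x', x_j}$ are pairwise distinct and distinct from those already used. A secondary subtlety is that the ``spanning subset'' $A^i_x$ genuinely is a direct summand of $E^i(x)$, since it is defined as a sum of entire summands $\Ltreort{x, x', \bi}$.
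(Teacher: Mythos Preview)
Your construction does produce a valid Morse vector field on the subset you define, and your gradient-path analysis is sound. However, your $A^i_x$ is \emph{not} the paper's $A^i_x$, and the difference is the whole point of the lemma.

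The paper does not try to span each summand $\Ltreort{x,x',\bi}$ in one shot using the fixed $x_1,\ldots,x_4$. Instead it builds an increasing filtration $F_0\subseteq F_1\subseteq\cdots$ of $E^i(x)$ indexed by pairs $(z,x')$: a pair $(z,x')$ enters $F_j$ as soon as there exists some $x''$ with $(z,x'')\in F_{j-1}$, $\{x,x',x''\}$ a simplex, and $z\in\Ltreort{x,x',x'',\bi}$. The basis vector $z$ is then made redundant with partner in $\{x,x',x''\}$. The Morse condition is proved by induction on $j$: from $\{x,x',x''\}$ one lands either in $\{x',x''\}\in\F^i_{k+1}$ (handled by $A(k+1)$) or in $\{x,x''\}$, which is $F_{j-1}$. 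The set $A^i_x$ is then $\bigcup_j F_j$.

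This iterative definition is not cosmetic: it is exactly what powers the subsequent Proposition~\ref{p:A=E} (that $A^i_x=E^i(x)$). The mechanism is Lemma~\ref{l:claim}: a path $v_0\to\cdots\to v_\ell$ in $\Lconn{i}(g)$ with $(z,v_0)\in F_m$ forces $(z,v_\ell)\in F_{m+\ell}$, by repeatedly invoking the defining property of $S_j$. The whole of Section~3.3--3.4 (Lemmas~\ref{l:basis}, \ref{l:smh}, \ref{l:help}, Cor.~\ref{c:smh}) is devoted to producing such paths ending at an arbitrary $x'$, basis vector by basis vector. Your $A^i_x$, which only contains those $x'$ with $\{x,x',x_1,\ldots,x_4\}$ a simplex, has no analogous propagation mechanism; your proposed ``subsequent lemma via a different auxiliary choice'' would have to replicate this path-chaining argument from scratch, and that is precisely the content you have omitted. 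In short: the missing idea is the filtration $F_j$ and its role in Lemma~\ref{l:claim}.
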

\begin{proof}We can write $E^i(x)=\bigoplus_{x'\in \F^i_{k+1}(g)^{(0)}}\ideal{z\mid (z,x')\in S}$, where $S$ is the following index set:
\begin{equation*}
    S=\set{(z,x')\in \LtreH \times \F^i_{k+1}(g)^{(0)}\mid \set{x,x'} \in \F^i_{k+1}(g)^{(1)}, z\in \Ltreort{x,x',\bi}}
\end{equation*}
We filter $E^i(x)$ by subsets $F_0\del F_1\del F_2\del \cdots \del E^1(x)$, by inductively defining $S_0\del S_1\del\cdots \del S$, and setting $F_j=\bigoplus_{x'\in \F^i_{k+1}}\ideal{z\mid (z,x')\in S_j}$. For the induction start, define $S_0= \set{(z,x')\in S\mid x'\in \set{x_1,x_2,x_3,x_4}}$, and note $F_0$ is already handled in the proof of Lemma \ref{l:i-start}. For the induction step, assume we have defined $F_{j-1}$ and its basis $B_{j-1}$. Then set
\begin{eqnarray}\label{e:Sj}
    S_j=\left\{(z,x')\in S \right.&\mid &\exists x'':(z,x'')\in F_{j-1},\nonumber \set{x,x',x''}\in \F^i_{k+1}(g)^{(2)},\\
    										&& \left. z\in \Ltreort{x,x',x'',\bi}\right\}.
\end{eqnarray}
Choose a basis $B_j$ for $F_j$ by extending $B_{j-1}$ by suitable vectors $z$ with $(z,x')\in S_j$. Set $A^i_x=\bigcup_{i=1}^{\infty}F_j\del E^i(x)$. We get the basis $B_x= \bigcup_{j=1}^{\infty}B_j$ for $A^i_x$. To define the vector field, for $z\in B_j$, set $\simp(c(z)))=\set{x,x',x''}$ with $x''$ as in \eqref{e:Sj}. The vector field spans $A^i_x$ by construction.

We prove that there are no infinite gradient paths by induction in $j$. The induction start is Lemma \ref{l:i-start}. A gradient path from $z\in B_j$ leads to either a vector with simplex $\set{x',x''}\in \F^i_{k+1}$ or to $F_{j-1}$; and there are no infinite gradient paths starting there by induction in $k$ or $j$, respectively.
\end{proof}

\begin{prop}\label{p:A=E}If $g\ge 8-i$, $A^i_x=E^i(x)$.
\end{prop}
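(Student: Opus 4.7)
The plan is to show that every pair $(z, x') \in S$ eventually lies in some $F_j$, by iterating the reduction step provided by Lemma~\ref{l:decompose}. The basic move is this: given bridges $w_1, w_2, w_3, w_4$ with $\set{x, x'} * \set{w_1, w_2, w_3, w_4} \in \L^i(g)$, Lemma~\ref{l:decompose} writes $z = \sum_{j=1}^4 z^{(j)}$ with $z^{(j)} \in \Ltreort{x, x', w_j, \bi}$, and the definition of $S_{j_0+1}$ with witness $x'' = w_j$ immediately yields $(z^{(j)}, x') \in F_{j_0+1}$ as soon as $(z^{(j)}, w_j) \in F_{j_0}$. So the game is to reduce every $(z, x')$ to pairs $(z', x_k)$ with $x_k \in \set{x_1, \ldots, x_4}$, which lie in $F_0$ by construction.

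Fix $(z, x') \in S$ with $x' \notin \set{x_1, x_2, x_3, x_4}$ (else there is nothing to do). First I would produce bridges $w_1, w_2, w_3, w_4 \in \F^i_{k+1}$ satisfying both $\set{x, x'} * \set{w_1, w_2, w_3, w_4} \in \L^i(g)$ and, for each $j$, $\set{x, w_j} * \set{x_1, x_2, x_3, x_4} \in \L^i(g)$. Such bridges are built via Prop.~\ref{p:dual} applied inside the projection $H_m$ (with $m = \max(k+1, 2)$), by picking the $w_j$'s in a suitable isotropic subspace of the symplectic complement of $\ideal{x', x_1, x_2, x_3, x_4}$. In the $i=2$ setting, the compensator construction of Lemma~\ref{r:compensate} is used to enforce $\rk^{a_1}(w_j) = 1$, exactly as in the proof of Lemma~\ref{l:1234}.

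Given such bridges, apply Lemma~\ref{l:decompose} twice. First to $\set{x, x'}$ with auxiliaries $w_1, \ldots, w_4$, obtaining $z = \sum_j z^{(j)}$, $z^{(j)} \in \Ltreort{x, x', w_j, \bi}$. Then, for each $j$, to $\set{x, w_j}$ with auxiliaries $x_1, \ldots, x_4$, giving $z^{(j)} = \sum_k z^{(j,k)}$ with $z^{(j,k)} \in \Ltreort{x, w_j, x_k, \bi}$. Each $(z^{(j,k)}, x_k)$ lies in $F_0$ since $x_k$ is one of the preferred four vertices attached to $x$ and $z^{(j,k)} \in \Ltreort{x, x_k, \bi}$. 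The $S_1$-rule with witness $x'' = x_k$ then places $(z^{(j,k)}, w_j) \in F_1$; summing over $k$ gives $(z^{(j)}, w_j) \in F_1$. Finally the $S_2$-rule with witness $x'' = w_j$ places $(z^{(j)}, x') \in F_2$; summing over $j$ yields $(z, x') \in F_2 \subset A^i_x$.

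The hard part will be the simultaneous constructibility of the bridges: pinning $w_j$ into the intersection of two symplectic orthogonal complements while still finding four isotropic directions there is what forces the sharp genus bound $g \ge 8-i$. Once the bridges are in hand, the rest is a formal unwinding of the definitions of $S_1$ and $S_2$, and the case $x' \in \set{x_1, \ldots, x_4}$ is handled by the base step $F_0$ of Lemma~\ref{l:Ax}.
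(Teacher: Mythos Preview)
Your two-step bridge strategy is genuinely different from the paper's. The paper reduces Prop.~\ref{p:A=E} to Lemma~\ref{l:A=E} and proves the latter via path-connectivity of the auxiliary complex $\Lconn{i}(g)$: for each element of a carefully chosen basis $\streg B(x')$ of $\Ltreort{x,x',\bi}$ (built in Lemma~\ref{l:basis} and Remark~\ref{r:basis}), one finds a path $x_m\to\cdots\to x'$ in $\Lconn{i}(g)$ and transports the element along it one edge at a time (Lemma~\ref{l:claim}). Each step of the path requires only a \emph{single} intermediate vertex compatible with the three fixed wedge factors $z_1,z_2,z_3$; this is what makes the bound $g\ge 8-i$ attainable (see the dimension count in Lemma~\ref{l:smh}).

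Your bridge construction, by contrast, is asserted but not carried out, and this is where the argument has a gap. Take $i=1$, $k=1$: you want $w_1,\ldots,w_4\in \F^1_2$ with $\{x,x',w_1,\ldots,w_4\}\in\L^1(g)$ and $\{x,w_j,x_1,\ldots,x_4\}\in\L^1(g)$ for each $j$. Since $x'$ is an arbitrary vertex of $\F^1_2$, it need not be orthogonal to $x_1,\ldots,x_4$, so the six orthogonality constraints $w_j\perp x,x',x_1,\ldots,x_4$ cut out a subspace of $H_2$ whose isotropic rank depends on the pairings $\ialg{x',x_j}$. Even granting that four mutually isotropic vectors exist there, arranging \emph{both} $\gcd$ conditions simultaneously (so that both 6-tuples span summands) is a nontrivial arithmetic constraint you have not addressed; for instance, the obvious choice of $w_j$'s inside the radical of $\langle\pr_{H_2}(x),x',x_1,\ldots,x_4\rangle$ need not give $\gcd=1$. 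Your claim that this is ``what forces the sharp genus bound $g\ge 8-i$'' is unsupported. For $i=2$ the situation is worse: the paper needs the elaborate case analysis of Lemma~\ref{l:help} and Cor.~\ref{c:smh}, treating separately the cases $\rk^{b_1}(x')=\rk^{b_1}(x)$ and $\rk^{b_1}(x')\neq\rk^{b_1}(x)$ and handling the $b_1$-direction via the splitting~\eqref{e:pr2decomp}; your single invocation of Lemma~\ref{r:compensate} does not touch any of this.
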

This is a major step, and the differences between $i=1,2$ become so pronounced that we handle them separately, in the next two subsections. First, though, we observe it suffices to prove the following:

\begin{lem}\label{l:A=E}Let $x'\in \F^1_{k+1}$ such that $\set{x,x'}$ is a simplex be given. Then there exist a basis $\overline{B}(x')$ for $\Ltreort{x,x'}$ such that for all $z\in \overline{B}(x')$ there is $s\in \N$ with $(z,x')\in F_s$. 
\end{lem}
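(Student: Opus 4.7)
The plan is to construct $\overline{B}(x')$ by combining Lemma \ref{l:1234} to produce four helper vertices in $\F^1_{k+1}$ and Lemma \ref{l:decompose} to decompose $\Ltreort{x,x'}$, and then to verify the $F_s$-condition by a recursive argument.

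Since $x\in\F^1_k\fra\F^1_{k+1}$, the pair $\set{x,x'}$ is a 1-simplex in $\F^1_k\fra\F^1_{k+1}$. Apply Lemma \ref{l:1234} to this 1-simplex to obtain $\set{x,x'}^*=\set{w_1,\ldots,w_{g-3}}\del\F^1_{k+1}$ with $\set{x,x',w_1,\ldots,w_{g-3}}$ a simplex in $\F^1_k$. The assumption $g\ge 7$ provides at least four helpers $w_1,w_2,w_3,w_4$. Now apply Lemma \ref{l:decompose} to the 1-simplex $\set{x,x'}$ with these helpers and dual vectors $u_1,u_2,u_3,u_4$ chosen via Proposition \ref{p:dual} so that $\ialg{u_j,w_l}=\de_{jl}$ and each $u_j\perp x,x'$, yielding
\[
\Ltreort{x,x'}\iso \Ltreort{x,x',w_1}\oplus\ideal{u_1}\wedge\Lort{2}{x,x',w_2}\oplus\ideal{u_1\wedge u_2}\wedge\ortQ{x,x',w_3}\oplus\ideal{u_1\wedge u_2\wedge u_3}.
\]
Let $\overline{B}(x')$ be the union of natural bases of the four summands.

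Each basis vector $z\in\overline{B}(x')$ lies in exactly one of these summands, indexed by some $j\in\set{1,2,3,4}$, so in particular $z\perp w_j$, and $\set{x,x',w_j}$ is a 2-simplex with $w_j\in\F^1_{k+1}$. Hence $z\in\Ltreort{x,x',w_j}$, and by the definition of $S_s$ we will have $(z,x')\in F_s$ as soon as $(z,w_j)\in F_{s-1}$ for some $s-1$. To establish this I would iterate the same construction: apply Lemma \ref{l:1234} to the 1-simplex $\set{x,w_j}$, decompose $\Ltreort{x,w_j}$ by Lemma \ref{l:decompose}, and observe that $z$ sits in the corresponding summand of the new decomposition, reducing the task to finding $(z,w_j')\in F_{s-2}$ for a helper $w_j'$ of $w_j$.

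The main obstacle is showing that this recursion terminates, so that $s$ is finite. A priori, iterating Lemma \ref{l:1234} could produce helpers that keep wandering inside $\F^1_{k+1}$. The remedy I would pursue is to fix once and for all a well-ordering of $\F^1_{k+1}(g)^{(0)}$ placing $x_1,\ldots,x_4$ at the bottom, and to refine the proof of Lemma \ref{l:1234} so that the helpers assigned to $\set{x,y}$ are always strictly below $y$ in this ordering. With such a monotone choice, the second coordinate strictly decreases at every iteration, so after finitely many steps we must reach a helper in $\set{x_1,\ldots,x_4}$, landing in $F_0$. A secondary technical point is to choose the dual vectors at each stage inside a single compatible symplectic splitting (using Proposition \ref{p:dual} $(iii)$--$(iv)$), so that the same $z$ really does reappear in the expected summand of the new decomposition at every step of the iteration.
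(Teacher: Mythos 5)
Your approach misses the key mechanism the paper uses to control the recursion, and the gap is fatal rather than cosmetic. Recall from the definition of $F_j$ (in Lemma \ref{l:Ax}) that $(z,x')\in F_s$ requires a chain $x'=x''_0,x''_1,\ldots,x''_s$ in $\F^1_{k+1}$ ending at one of the fixed helpers $x_1,\ldots,x_4$ of $x$, with each consecutive triple $\set{x,x''_j,x''_{j+1}}$ a $2$-simplex and, crucially, $z\in \Ltreort{x,x''_j,x''_{j+1}}$ at every step. So every intermediate vertex in the chain must be orthogonal to the components $z_1,z_2,z_3$ of the \emph{fixed} vector $z$. Your recursion does not produce such a chain. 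After decomposing $\Ltreort{x,x'}$ with respect to helpers $w_j$ depending on $\set{x,x'}$, you get $z\in\Ltreort{x,x',w_j}$; but when you then apply Lemma \ref{l:1234} to $\set{x,w_j}$, the new helpers $w_j'$ have no reason to be orthogonal to $z_1,z_2,z_3$ (Lemma \ref{l:1234} knows nothing about $z$). Consequently $z\notin\Ltreort{x,w_j,w_j'}$, and the step $(z,w_j)\rightsquigarrow(z,w_j')$ simply does not satisfy the definition of $S_j$. Re-decomposing $\Ltreort{x,w_j}$ does not help: $z$ need not be a single basis vector of the new decomposition, so you cannot iterate. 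The termination issue you identify (wandering inside $\F^1_{k+1}$) is real, but it is secondary; even a monotone choice of helpers would not repair the orthogonality failure.

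The paper's actual proof handles both problems at once by two ingredients you do not use. First, instead of decomposing $\Ltreort{x,x'}$ via helpers attached to $\set{x,x'}$, Lemma \ref{l:basis} produces a basis $\overline{B}(x')$ each of whose elements $z=z_1\wedge z_2\wedge z_3$ already lies in $\Ltreort{x,x_j}$ for some fixed helper $x_j$ of $x$ itself (these are chosen once and for all by Lemma \ref{l:1234}); this gives the anchor $(z,x_j)\in F_1$ of Remark \ref{r:start}. Second, and this is the heart of the matter, Lemma \ref{l:smh} shows that the subcomplex $\Lconn{1}(g)$ of vertices orthogonal to $z_1,z_2,z_3$ is \emph{connected}. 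This connectivity statement is precisely what guarantees a finite path $x_j=v_0\to v_1\to\cdots\to v_\ell=x'$ all of whose vertices are orthogonal to $z$, so that $z\in\Ltreort{x,v_m,v_{m+1}}$ at every step, and Lemma \ref{l:claim} then pushes $(z,x_j)\in F_1$ along the path to $(z,x')\in F_{\ell+1}$. Without some such connectivity input, there is no reason a chain of the required kind exists at all; your proposed well-ordering fix cannot substitute for it.
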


\begin{rem}\label{r:start} For $x_1,\ldots,x_5$ associated to $x$ from Lemma \ref{l:1234}, there are bases $\streg B(x_m)$ satisfying Lemma \ref{l:A=E}: Given any $z\in \Ltreort{x,x_m,b_1}$, from the definition it is easy to see that $(x_m,z)\in F_1$, where $m=1,2,3,4,5$.
\end{rem}

\begin{lem}\label{l:Ak}
Assuming Lemma \ref{l:A=E}, $A(k)$ holds.
\end{lem}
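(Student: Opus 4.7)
The plan is to combine the induction hypothesis $A(k+1)$ with Lemmas \ref{l:i-start} and \ref{l:Ax}, using the equality $A^i_x = E^i(x)$ from Prop. \ref{p:A=E} (which is exactly what Lemma \ref{l:A=E} is designed to supply) to extend the Morse vector field $V^i_*$ from $E^i_2(k+1)$ up to all of $E^i_2(k)$. By $A(k+1)$, $V^i_*$ already spans $E^i_0$, $E^i_1$, and $E^i_2(k+1)$, so what remains is to span the summands $\Ltreort{\ww,\bi}$ indexed by 1-simplices $\ww \in \F^i_k(g)^{(1)}$ having at least one vertex in $\F^i_k \fra \F^i_{k+1}$.

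First I would apply Lemma \ref{l:i-start}, which extends $V^i_*$ so it spans $\bigoplus_{x \in \F^i_k(g)^{(0)}} \bigoplus_{j=1}^{4} \Ltreort{x, x_j, \bi}$; these summands provide the collapsible seeds for the filtration $F_0 \del F_1 \del \cdots$ built inside the proof of Lemma \ref{l:Ax}. Next, iterating over vertices $x \in \F^i_k \fra \F^i_{k+1}$, I apply Lemma \ref{l:Ax} to extend $V^i_*$ to span $A^i_x$; by the assumed Lemma \ref{l:A=E} together with Prop. \ref{p:A=E}, $A^i_x = E^i(x)$. Collecting these extensions together with the inherited vector field on $E^i_2(k+1)$ covers every summand of $E^i_2(k)$, since any 1-simplex $\ww \in \F^i_k(g)^{(1)} \fra \F^i_{k+1}(g)^{(1)}$ has at least one vertex $x \in \F^i_k \fra \F^i_{k+1}$ and is therefore indexed by a summand of the corresponding $E^i(x)$.

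The main obstacle is verifying that the combined $V^i_*$ remains a Morse vector field. Disjoint pairs follows because each collapsible vector newly introduced when processing a vertex $x$ has $\simp(c(z))$ a 2-simplex containing $x$, so pair families coming from distinct starting vertices are automatically disjoint, while within a single $x$ the disjointness is built into the nested filtration $F_j$ of Lemma \ref{l:Ax}. For finiteness of gradient paths, the layered structure controls propagation: a new gradient path either stays inside a single $A^i_x$, where the internal $j$-induction of Lemma \ref{l:Ax} bounds its length, or transitions to a vector whose simplex lies in $\F^i_{k+1}$, where the induction hypothesis $A(k+1)$ provides the bound. Since the $\simp(c(z))$ attached to a newly processed redundant vector always contains the distinguished vertex $x \in \F^i_k \fra \F^i_{k+1}$, no gradient path can cycle back from the $\F^i_{k+1}$-region into the new region, so the total path length is finite.
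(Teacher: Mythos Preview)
There is a genuine gap in your coverage of $E^i_2(k)$. Recall that by definition
\[
E^i(x)=\bigoplus_{x'\in \F^i_{k+1}(g)^{(0)}:\ \{x,x'\}\in \F^{i}_{k}(g)}\Ltreort{x,x',\bi},
\]
so $E^i(x)$ only indexes 1-simplices $\{x,x'\}$ with the \emph{other} vertex $x'$ lying in $\F^i_{k+1}$. Consequently, a 1-simplex $\{x,x'\}\in\F^i_k\fra\F^i_{k+1}$ with \emph{both} vertices in $\F^i_k\fra\F^i_{k+1}$ appears in no $E^i(x)$ whatsoever; your assertion ``has at least one vertex $x\in\F^i_k\fra\F^i_{k+1}$ and is therefore indexed by a summand of the corresponding $E^i(x)$'' fails precisely here. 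Moreover, for $i=2$ the filtration terms $\F^2_k$ are not full subcomplexes (because of the $\star(a_1)$ and $\gcd=1$ conditions), so one can also have both vertices in $\F^i_{k+1}$ while $\{x,x'\}\notin\F^i_{k+1}$; these summands are covered neither by $A(k+1)$ nor by any $E^i(x)$.

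The paper addresses exactly these two residual cases by a separate construction: for such $\{x,x'\}$ it invokes Lemma~\ref{l:1234} to produce $\{x,x'\}^*=\{v_1,v_2,v_3,v_4\}\in\F^i_{k+1}$ with $\{x,x'\}*\{v_1,\ldots,v_4\}\in\F^i_k$, and then applies Cor.~\ref{c:morserest} to span $\Ltreort{x,x',\bi}$. The resulting gradient paths lead to simplices $\{x,v_j\}$ or $\{x',v_j\}$, which lie either in some $E^i(y)$ already handled (when one vertex is in $\F^i_k\fra\F^i_{k+1}$) or in $E^i_2(k+1)$ by Lemma~\ref{l:1234}$(iii)$, so finiteness follows from the earlier step and from $A(k+1)$. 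Your argument needs this extra layer to be complete.
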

\begin{proof}
We have then by Lemma \ref{l:Ax} a Morse vector field spanning
\begin{equation*}
E^i(k,k+1):=\bigoplus_{x\in \F^i_k\fra \F^i_{k+1}}E^i(x)=\hspace{-1cm}\bigoplus_{\set{x,x'}\in\F^i_k(g)^{(1)}: x\in \F^i_k\fra\F^i_{k+1},x'\in\F^i_{k+1}}\hspace{-1cm}\Ltreort{x,x',\bi},
\end{equation*}
All we are missing are the cases $x,x'\in \F^i_k\fra \F^i_{k+1}$ and $x,x'\in \F^i_{k+1}$ but $\set{x,x'}\in\F^i_k\fra\F^i_{k+1}$, which we handle in the same way: We take $\set{x,x'}^*=\set{v_1,v_2,v_3,v_4}$ from Lemma \ref{l:1234}. Construct the vector field as in Cor. \ref{c:morserest}. Given a gradient path starting in some vector with simplex $\set{x,x'}$, it can either lead to $\set{x,v_i}$ or $\set{x',v_i}$, both of which are simplices for vectors in $E^i(k,k+1)$ in the first case, or in $E_2^i(k+1)$ in the second case by Lemma \ref{l:1234} $(iii)$, and no infinite gradient paths start there, in the latter case by $A(k+1)$.
\end{proof}

Pending the proof of Lemma \ref{l:A=E}, we have now proved Theorem \ref{t:vectfield2}.

\subsection{Finishing the proof of Theorem \ref{t:vectfield2} for $i=1$}

In this section, $i=1$. We must prove Lemma \ref{l:A=E}. In light of Remark \ref{r:start}, we will deduce the existence of $\streg B(x')$ from that of $\streg B(x_m)$, where $m=1,2,3,4,5$. To do this, we need some preliminary Lemmas.

\begin{lem}\label{l:basis}
Given $\set{x,x'}\in\F^1_k(g)$ with $x,x'\notin \set{a_1\ldots, a_{k-1}}$. Assume given $v_1,v_2,v_3,v_4\in \F_{k+1}^1(g)^{(0)}$ such that $\set{x,v_1,v_2,v_3,v_4}\in \F_k^1(g)$, and $\gcd(x,x',v_1,v_2,v_3,v_4)>0$, and $v_j=a_j$ for $1\le j\le k-1$. Then,
\begin{itemize}
  \item[$(i)$]There is a $\Q$-basis $B=B(x')$ for $\ortZ{x,x'}$ that satisfies:
      \begin{itemize}
        \item[$(a)$]$\set{a_1,b_1,\ldots,a_{k-1},b_{k-1}}\del B\del \F^1_k(g)^{(0)}\cup \set{b_1,\ldots, b_{k-1}}$,
        \item[$(b)$]$\forall z\in B: \abs{\set{1\le j\le 4\mid z\perp v_j}}\ge 3$.
      \end{itemize}
  \item[$(ii)$]There is a basis $\overline B=\overline B(x')$ for $\Ltreort{x,x'}$ such that for each $\bar z\in \overline B$ there is $ 1\le j\le 4$ with $\bar z\in\Ltreort{x,v_j}$.
\end{itemize}
\end{lem}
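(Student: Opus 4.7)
The plan is to write $B = B_0 \cup B_1$ where $B_0 = \set{a_1, b_1, \ldots, a_{k-1}, b_{k-1}}$ and $B_1$ is a $\Q$-basis of $W := \ortZ{x,x'} \cap H_k$ by primitive vectors of $H_k$. First I would note that $x, x' \in H_k$: by hypothesis they lie in $\F^1_k(g)^{(0)} \subset H_k \cup \set{a_1, \ldots, a_4}$ but are excluded from $\set{a_1, \ldots, a_{k-1}}$. The symplectic splitting $H = \ideal{a_1, b_1, \ldots, a_{k-1}, b_{k-1}} \oplus H_k$ then makes every element of $B_0$ orthogonal to $x$ and $x'$. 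Using $v_j = a_j$ for $j < k$ while $v_j \in H_{k+1} \cup \set{a_1, \ldots, a_4}$ for $j \ge k$, each $a_j \in B_0$ is orthogonal to all of $v_1, v_2, v_3, v_4$, and each $b_j \in B_0$ is orthogonal to $v_i$ for $i \ne j$, giving (b).

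For $B_1$, since every $w \in H_k$ is already orthogonal to $v_1, \ldots, v_{k-1}$, condition (b) reduces to requiring each basis vector to have at most one nonzero $\phi_j(w) := \ialg{w, v_j}$ among $j \in \set{k, \ldots, 4}$; this is vacuous for $k \ge 4$. For $k \le 3$ I would apply Proposition \ref{p:dual} to the isotropic tuple $(x, v_k, \ldots, v_4)$ inside $H_k$---its gcd is $1$ since $\set{x, v_1, \ldots, v_4}$ spans a summand of $H$ with $v_1, \ldots, v_{k-1}$ splitting off as $\ideal{a_1, \ldots, a_{k-1}}$---to obtain duals $y_x, u_k, \ldots, u_4 \in H_k$ and a symplectic decomposition $H_k = S \oplus D \oplus T$. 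Writing $x' = \alpha x + \sum \beta_j v_j + \sum \delta_j u_j + t$ (the $y_x$-coefficient vanishes by $\ialg{x, x'} = 0$), linear independence of $\set{x, x', v_k, \ldots, v_4}$ forces some $\delta_{j_0} \ne 0$ or $t \ne 0$. For each $j \in \set{k, \ldots, 4}$ I would produce a modified dual $\tilde u_j$---subtracting $(\beta_j / \delta_{j_0}) v_{j_0}$ in the first case, or adding $r_j \in T$ with $\ialg{r_j, t} = \beta_j$ in the second---landing in $W$ with $\mu_j$ as the only nonzero $\mu$-coordinate, and then rescale $\tilde u_j$ to a primitive vector of $H_k$. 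I would complete $B_1$ with a $\Z$-basis of the summand $\ortZ{x, x', v_k, \ldots, v_4} \cap H_k$ (rank $2g - k - 5$), whose members have all $\mu_j = 0$ and hence trivially satisfy (b).

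For Part (ii), the plan is a pigeonhole on wedges. Fix an ordering of $B$ and let $\overline B$ be the set of all $z_{i_1} \wedge z_{i_2} \wedge z_{i_3}$ with $i_1 < i_2 < i_3$; this is a $\Q$-basis of $\Lambda^3 \ortQ{x, x'} = \Ltreort{x, x'}$. Each $z_i \in B$ is orthogonal to at least three of $\set{v_1, v_2, v_3, v_4}$ by (b), so the complements of the three perp-sets in $\set{1, 2, 3, 4}$ have sizes summing to at most $3 < 4$. Hence some $j$ is common to all three perp-sets, placing the wedge in $\Lambda^3 \ortQ{x, v_j} = \Ltreort{x, v_j}$.

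The main obstacle is part (i): carefully verifying that the modified duals $\tilde u_j$ and the $\Z$-basis of $\ortZ{x, x', v_k, \ldots, v_4} \cap H_k$ can indeed be chosen as primitive vectors of $H_k$ (hence $0$-simplices of $\F^1_k(g)$) while preserving the orthogonality constraints. The key facts needed are that $\ortZ{\cdot} \cap H_k$ is a summand of $H_k$ (so $\Z$-bases of it consist of primitive vectors) and that rescaling $\tilde u_j$ preserves its $\mu_j \ne 0$ structure in the $S \oplus D \oplus T$ decomposition.
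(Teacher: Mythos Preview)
Your proposal is correct and follows essentially the same strategy as the paper: split off $H_{<k}=\ideal{a_1,b_1,\ldots,a_{k-1},b_{k-1}}$, take a basis of the deep orthogonal $\ortZ{x,x',v_k,\ldots,v_4}\cap H_k$, and adjoin vectors $u_j$ that are orthogonal to $x,x'$ and to every $v_i$ except $v_j$; part $(ii)$ is the same pigeonhole on wedges. The only difference is in how you produce the $u_j$: the paper picks them directly as primitive preimages under the quotient $V'_k(j)\to V'_k(j)/V'_k$, where $V'_k(j)=\ortZ{x,x',v_k,\ldots,\hat v_j,\ldots,v_4}\cap H_k$, whereas you invoke the symplectic dual basis from Proposition~\ref{p:dual} and then correct each dual to land in $\ortZ{x'}$. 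Your route is a little heavier (and your correction term has a sign slip --- one should \emph{add} $(\beta_j/\delta_{j_0})v_{j_0}$), but both arrive at the same vectors up to scale.
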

\begin{proof}
Write $H_{<k}= \ideal{a_1,b_1,\ldots,a_{k-1},b_{k-1}}$. By assumption, $x,x'\in H_{k}$ so $\ortZ{x,x'}=H_{<k}\oplus V_k$, where $V_k\del H_k$. We choose $B=\set{a_1,b_1,\ldots,a_{k-1},b_{k-1}}\cup B_k$, where $B_k\del \F^1_k(g)^{(0)}$ is a basis of $V_k$ that satisfies $(b)$.

To construct $B_k$, consider $\ortZ{x,x',v_k,\ldots,v_4}$. Since $v_i\in H_k$ for $i\ge k$, we have again $\ortZ{x,x',v_k,\ldots,v_4}=H_{<k}\oplus V_k'$, where $V_k'\del V_k$. Take a basis $B_k'$ of $V_k'$.
Now we choose a type of ``dual vectors'' to $v_k,\ldots,v_4$, as follows: Consider, for $k\le j\le 4$, the space $\ortZ{x,x',v_k,\ldots,\hat v_j,\ldots,v_4}=H_{<k}\oplus V_k'(j)
$,
where $V_k'\del V_k'(j)$. Choose $u_j\in V_k'(j)$ such that it maps to a simple vector under the quotient map $V_k'(j)\To V_k'(j)/V_k'$. Then $u_j\in \F^1_k(g)$, and by construction, $B_k:=B_k'\cup \set{u_k,\ldots,u_4}$ is a $\Q$-basis of $V_k$ that satisfies $(b)$. This shows $(i)$, and $(ii)$ follows immediately.
\end{proof}

The following gives us a way to ensure that a basis element $z_1\wedge z_2 \wedge z_3\in \Ltreort{x,x_j}$ can be ``carried along'' to $\Ltreort{x,x'}$. We owe the idea to A. Putman, from his master class \emph{The Torelli group} at Aarhus University, 2008.
\begin{defn}
Let $i\in\set{1,2}$, and let $0\le k\le 5$. Let $x\in \F^i_k\fra \F^i_{k+1}$. For $z_1,\ldots,z_n\in \ortQ{x}$, set $Z=\ortQ{z_1,\ldots,z_n}$, and let $W^i$ denote the full subcomplex of $\L^i(g)$ with vertices in $Z$. Define the complex $\Lconn{i}(g)$ to be 
\begin{equation*}
  \Lconn{i}(g)=\set{\vv\in \F^i_{k+1}\cap W^i\mid \set{x,\vv}\in \F^i_{k}}.
\end{equation*}
\end{defn}
\noindent Note, for a simplex $\vv\in \Lconn{1}(g)$, if $n=3$, we have $z_1\wedge z_2 \wedge z_3\in \Ltreort{x,\vv}$.

\begin{lem}\label{l:smh}
Let $n+4\le g$. Given a basis $B$ of $\ortQ{x,x'}$ that satisfies $(a)$ of Lemma \ref{l:basis} $(i)$. Assume $z_1,\ldots,z_n\in B$. Then $\Lconn{1}(g)$ is connected.
\end{lem}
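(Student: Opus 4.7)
I will establish connectivity by the common-neighbour method. Non-emptiness of $\Lconn{1}(g)$ and the existence of a common neighbour $w$ for any two vertices $v,v'$ both reduce to finding a primitive vector $w$ in $\F^1_{k+1}\cap Z$, symplectically orthogonal to $x,v,v'$, and satisfying $\gcd(x,v,w)=\gcd(x,v',w)=1$ (the non-emptiness case is $v=v'$ arbitrary). Given such a $w$, both $\{x,v,w\}$ and $\{x,v',w\}$ are $2$-simplices in $\F^1_k$, so $\{v,w\}$ and $\{v',w\}$ are edges in $\Lconn{1}(g)$ and $v,v'$ are joined by the length-$2$ path $v-w-v'$.

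The plan is to produce $w$ as a primitive element of the submodule
	\[
	M = H_{k+1} \cap \ortZ{x,v,v',z_1,\ldots,z_n}.
	\]
Using the symplectic splitting $H = H_{<k+1} \oplus H_{k+1}$ with $H_{<k+1}=\ideal{a_1,b_1,\ldots,a_k,b_k}$, orthogonality inside $H_{k+1}$ to a vector $u\in H$ is equivalent to orthogonality to $\pr_{H_{k+1}}(u)$. By property $(a)$ of Lemma~\ref{l:basis}, any $z_i$ equal to one of $a_j,b_j$ with $j<k$ projects to zero and imposes no constraint on $w$; similarly any $A_j$ that belongs to $H_{<k+1}$ contributes nothing. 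Combining these savings with the hypothesis $n+4\le g$, and with the observation that the constraints $v,v',x\perp z_1,\ldots,z_n$ force the effective projections to sit in a common subspace (reducing the count of independent constraints further), I expect to obtain $\rk M \ge 1$ with enough room to pick a primitive $w\in M$. The two $\gcd$ conditions are generic, failing only on proper subsummands of $M$, and can thus be avoided by a small perturbation inside $M$.

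The main obstacle I foresee is the tightness of the count at the extreme $k=5$, where $\F^1_{k+1} = \F^1_\infty = \set{A_1,A_2,A_3,A_4}$ consists of at most four candidate vertices; here I would abandon the submodule $M$ and instead verify connectivity of $\Lconn{1}(g)$ directly by inspecting the at most six possible edges among the $A_j$'s that satisfy the isotropy, $Z$-membership, and $\gcd$ conditions. A secondary complication is that $v$ or $v'$ may lie among the $A_j$'s rather than in $H_{k+1}$; this I would handle by a preliminary reduction step, routing each such offending vertex through an intermediate $\Lconn{1}(g)$-neighbour in $H_{k+1}$ constructed by the same common-neighbour technique, thereby reducing to the clean case $v,v'\in H_{k+1}$ where the dimension count in $M$ is sharpest.
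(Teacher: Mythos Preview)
Your overall strategy---connect any two vertices $v,v'$ through a common neighbour $w$---is the same as the paper's, and your module $M=H_{k+1}\cap\ortZ{x,v,v',z_1,\ldots,z_n}$ is the right place to look. But there is a genuine gap in how you handle the $\gcd$ conditions, and a second gap in the borderline case.

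First, the claim that $\gcd(x,v,w)=1$ and $\gcd(x,v',w)=1$ ``fail only on proper subsummands of $M$ and can be avoided by a small perturbation'' is not correct as stated. For a prime $p$, the bad set $\{w:\,p\mid\gcd(x,v,w)\}$ is the preimage of the plane $\langle \bar x,\bar v\rangle\subset H/pH$, a union of cosets of $pH$ rather than a subsummand, and you must avoid these simultaneously for all primes and for both $v,v'$. The paper sidesteps this completely: instead of the orthogonal complement $M$, it takes the \emph{symplectic} complement. Using Prop.~\ref{p:dual} it builds the smallest summand $SV^1_{k+1}\supset\pr_{k+1}\langle x,v,v'\rangle$, a dual $D^1$, and the orthogonal $T^1$; any primitive $w\in T^1$ then satisfies the $\gcd$ conditions for free by a direct-sum argument (Remark~\ref{r:gcdlig}), because $\pr_{k+1}(x),\pr_{k+1}(v),\pr_{k+1}(v')$ lie in $SV^1_{k+1}$ while $w$ lies in $T^1$. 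This is the technical device you are missing.

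Second, the passage to the symplectic complement costs more dimensions than your orthogonal complement, and in the boundary case $n=k-1$ with $g=n+4$ the complement $T$ can be zero. The paper does \emph{not} use a single common neighbour here: it produces separate $w_1,w_2$ (one adapted to each $v_j$), observes that both lie in $\link_{\L(g-k)}(\bar x)$ where $\bar x=\pr_{k+1}(x)/\gcd(\pr_{k+1}(x))$, and invokes Prop.~\ref{p:conn1} to connect $w_1$ to $w_2$ there, yielding a longer path $v_1\to w_1\to\cdots\to w_2\to v_2$. Your proposal does not cover this. Finally, your planned ad hoc treatment of $k=5$ and of vertices among the $A_j$ is unnecessary: the paper notes that whenever some $a_j\in Z$ with $j<k$ (which, using condition $(a)$, always happens for $n<k-1$ and in particular for $k=5$ with $n\le 3$), that $a_j$ is a cone point of $\Lconn{1}(g)$ and connectivity is immediate.
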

\begin{proof}
First,  assume $a_j\in Z$ for some $1\le j <k\le 5$. Then $a_j\in \Lconn{1}(g)$, and moreover for any $v\in \Lconn{1}(g)^{(0)}\fra \set{a_j}$ we have $\set{a_j,v}\in \Lconn{1}(g)^{(1)}$. Thus $\Lconn{1}(g)$ is connected in this case.
Note this takes care of $n<k-1$.

Now let $n\ge k-1$. We can assume $a_j\notin Z$ for all $j<k$. Since $z_m\in B$, we must have $z_j=b_j$ for $1\le j\le k-1$. Then $\F^1_{k+1}(g)\cap Z=\F^1_{k+1}(g)\cap \ortQ{z_k,\ldots,z_n}\fra \set{a_1,\ldots,a_{k-1}}$. So if we set $Z'=\ortQ{z_k,\ldots,z_n}\fra\set{a_1,\ldots,a_{k-1}}$, then $\Lconn{1}(g)=\LconnZ{Z'}(g)$. Thus it suffices show $\LconnZ{Z'}(g)$ is connected.

For now, assume $n\ge k$. Given $v_1,v_2\in \LconnZ{Z'}(g)^{(0)}$, we will show there is a path $v_1\to w\to v_2$ in $\LconnZ{Z'}(g)$. Let  $V^1=\ideal{v_1,v_2,x}$ and $V^2=\ideal{z_k,\ldots,z_n}$. The idea is that $w$ need not be independent of $z_k\ldots,z_n$, only orthogonal to them. Let  $SV^j_{k+1}$ be the smallest summand in $H_{k+1}$ containing $V^j_{k+1}=\pr_{k+1}(V^j)$. Take a dual summand $D^1$ (cf Prop. \ref{p:dual}) to $SV^1_{k+1}$ in $H_{k+1}$, i.e. there is $T^1$ so that $(SV^1_{k+1}\oplus D^1)\oplus T^1=H_{k+1}$. Now take a dual $D^2$ to $SV^{2,1}_{k+1}=S(\pr_{T_1}(V^2_{k+1}))$ in $T^1$, so obtaining $T^2\perp V^1+V^2$ with
\begin{equation}\label{e:duals}
    (SV^1_{k+1}\oplus D^1)\oplus (SV^{2,1}_{k+1}\oplus D^2)\oplus T^2=H_{k+1}.
\end{equation}
Since $n\ge k$, if $SV^{2,1}_{k+1}=0$, then $\dim T^2\ge 2(g-k-3)\ge 2$. Thus, any simple vector $w\in T^2$ gives a path $v_1\to w\to v_2$ in $\LconnZ{Z'}(g)$. If $SV^{2,1}_{k+1}\neq 0$, then choose $w'\in SV^{2,1}_{k+1}$, and use the dual basis for $D^2$ to modify $w'\rightsquigarrow w$ such that $w\perp z_i$, $i=k,\ldots,3$. Then $w\in (SV^{2,1}_{k+1}\oplus D^2)$, and so forms a simplex with $x,v_1,v_2$, since they are in $SV^1_{k+1}$. This proves the Lemma for $n \ge k$.

Now let $n=k-1\le 4$, i.e. $Z'=H\fra\set{a_1,\ldots,a_{k-1}}$. Again given $v_1,v_2\in \LconnZ{Z'}(g)^{(0)}$, set $\streg x=\pr_{H_{k+1}}(x)/\gcd(\pr_{H_{k+1}}(x))$. Consider $\bar V^j=\ideal{v_j,\streg x}$ for $j=1,2$, project to $H_{k+1}$ and take the dual as above, $(S\bar V^j_{k+1}\oplus D^j)\oplus \bar T^j_1=H_{k+1}$, and the dimension argument above gives $\bar T^j_1\neq 0$. We obtain a simple vector $w_j\in \bar T_1^j$. Define the subcomplex $\bar\F_{k+1}(g)\del \F^1_{k+1}(g)$ of simplices with vertices in $H_{k+1}$. Thus, $w_j\in\link_{\bar\F_{k+1}(g)}(\bar x)$. Now, $\bar\F_{k+1}(g)\iso \L(g-k)$ via $H_{k+1}\iso H(g-k)$. Then by Prop. \ref{p:conn1}, $\link_{\bar\F_{k+1}(g)}(\bar x)$ is connected for $g\ge k+3=n+4$, so there is a path from $w_1$ to $w_2$. This yields the desired path 
in $\LconnZ{Z'}(g)$, namely $v_1\to w_1\to \cdots \to w_2\to v_2$.
\end{proof}

\begin{proof}[Proof of Lemma \ref{l:A=E} for $i=1$:]
As usual $x_1,\ldots,x_5$ is the vectors associated to $x$ from Lemma \ref{l:1234}. Let $\overline{B}(x')$ be the basis for $\Ltreort{x,x'}$ from Lemma $\ref{l:basis}$, where $v_1,\ldots,v_4$ are four among $x_1,\ldots,x_5$. So for each $z=z_1\wedge z_2\wedge z_3\in \overline{B}$ there is $1\le j\le5$ with $z\in \Ltreort{x,x_j}$. For each $z\in \streg B$, set $Z=\ideal{z_1,z_2,z_3}^{\perp}$, and consider the complex $\Lconn{1}(g)$ from Lemma \ref{l:smh}, which is connected. We see $x',x_j\in \Lconn{1}(g)$, so there is a path in $\Lconn{1}(g)$ connecting them, $x_j=v_0\to v_1\to\cdots\to v_\ell=x'$. Then $(z,x')\in F_{\ell+1}$ by the lemma below, since $(z,x_m)\in F_1$ by Remark \ref{r:start}.
\end{proof}

\begin{lem}\label{l:claim}Let $z=z_1\wedge z_2\wedge z_3$ for $z_j\in \Ltreort{x,v}$. If there is a path $v=v_0\to v_1\to\cdots\to v_\ell=x'$ in $\Lconn{i}(g)$, and $(z,v)\in F_m$, then $(z,x')\in F_{m+\ell}$.
\end{lem}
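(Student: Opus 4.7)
The plan is straightforward induction on the path length $\ell$. For the base case $\ell=0$, we have $x'=v$, and the hypothesis $(z,v)\in F_m$ is exactly the conclusion $(z,x')\in F_m$.

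For the inductive step, apply the induction hypothesis to the subpath $v=v_0\to v_1\to\cdots\to v_{\ell-1}$ of length $\ell-1$, obtaining $(z,v_{\ell-1})\in F_{m+\ell-1}$. To deduce $(z,v_\ell)=(z,x')\in F_{m+\ell}$, I would invoke the definition of $S_{m+\ell}$ from equation \eqref{e:Sj} with the auxiliary vertex $x'':=v_{\ell-1}$. There are three items to check:
\begin{itemize}
\item[(1)] $(z,v_{\ell-1})\in F_{m+\ell-1}$, which is the inductive hypothesis;
\item[(2)] $\{x,v_\ell,v_{\ell-1}\}$ is a $2$-simplex lying in the appropriate filtration piece of $\F^i_{k+1}$ (or $\F^i_k$), which follows from $\{v_{\ell-1},v_\ell\}$ being a $1$-simplex of $\Lconn{i}(g)$: by definition of $\Lconn{i}(g)$, this gives $\{v_{\ell-1},v_\ell\}\in\F^i_{k+1}$ together with $\{x,v_{\ell-1},v_\ell\}\in\F^i_k$;
\item[(3)] $z\in\Ltreort{x,v_\ell,v_{\ell-1},\bi}$, i.e.\ each $z_j$ is orthogonal to $v_{\ell-1}$ and $v_\ell$.
\end{itemize}

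Of these, (1) is immediate and (2) is built into the definition of $\Lconn{i}(g)$, so the only point requiring an observation is (3). Here the definition of the ambient complex $\Lconn{i}(g)\del W^i$ does the work: by construction every vertex of $W^i$ lies in $Z=\ortQ{z_1,z_2,z_3}$, so every vertex along the path, in particular $v_{\ell-1}$ and $v_\ell$, is orthogonal to each $z_j$. Together with the orthogonality $z_j\perp x$ (and $z_j\perp\bi$), which is part of the initial data $z\in\Ltreort{x,v,\bi}$, we get (3).

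Since no step needs anything beyond the definitions, this is essentially bookkeeping; the only conceptual obstacle is recognizing that the definition of $\Lconn{i}(g)$ has been tailored precisely so that any path in it furnishes a sequence of witnesses $x''=v_{j-1}$ that meet all three conditions of \eqref{e:Sj} simultaneously, advancing the filtration index by exactly one per edge traversed.
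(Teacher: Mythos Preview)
Your proof is correct and follows exactly the approach the paper intends: the paper's own proof is the single sentence ``A straightforward induction shows $(z,v_j)\in F_{j+m}$ for $j=1,\ldots,\ell$,'' and you have simply unpacked that induction, verifying against the defining conditions of $S_j$ in \eqref{e:Sj} that the previous vertex $v_{\ell-1}$ serves as the witness $x''$. Your observation that the definition of $\Lconn{i}(g)$ is tailored so that each edge of a path supplies exactly such a witness is precisely the point.
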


\begin{proof}
A straightforward induction shows $(z,v_j)\in F_{j+m}$ for $j=1,\ldots,\ell$.
\end{proof}

\subsection{Finishing the proof of Theorem \ref{t:vectfield2} for $i=2$}\label{ss:finishexact2}
In this section, $i=2$. Recall we write $S(V)=S(\pr_2(V))$, where $V\del H$.  To prove Lemma \ref{l:A=E}, we use the same strategy as for $i=1$, but the details are more complicated. First, we establish versions of Lemmas \ref{l:basis} and \ref{l:smh}:

\begin{rem}\label{r:basis}For $i=2$, we apply Lemma \ref{l:basis} to $H_2=\pr_2 H$ to get a basis $B_2(x')$ of $\pr_2(\ortQ{\pr_2(x),\pr_2(x')})$ with $(a)$ replaced by
\begin{equation}\label{e:a'}
	\set{a_2,b_2,\ldots,a_{k-1},b_{k-1}}\del B\del \F^1_{\max(k,2)}(g)^{(0)}\cup \set{b_2,\ldots, b_{k-1}}.
\end{equation}
A modification for $k=1$, needed to apply Cor. \ref{c:smh} below: If we assume $\gcdto(x,x',v_1,v_2,v_3,v_4)=1$, we can obtain $B_2(x')$ such that for $m=1,2,3,4$,
\begin{equation*}
	\gcdto(x,S(x',v_m,z_1,z_2,z_3))=1, \quad \text{for distinct }z_i\in B_2(x').
\end{equation*}
To do this, replace the proof of Lemma \ref{l:basis} by the following: Choose honest dual vectors $y$ to $\pr_2(x)$, $y'$ to $\pr_2(x')$ and $y_j$ to $\pr_2(v_j)$ by Prop. \ref{p:dual}. $S=\ideal{\pr_2(x),\pr_2(x')}$ is a summand, its dual is $D=\ideal{y,y'}$, and we have $T$ s.t. $(S\oplus D)\oplus T=H_2$. Using the dual vectors, we can modify $\pr_T(v_j)$ to $v_j'$, and $\pr_2(x)$, $\pr_2(x')$ to $p(x)$, $p(x')$, such that $p(x), p(x'), v_1'\ldots, v_4'$ is isotropic and $v_1'\ldots, v_4'$ extends to a symplectic basis by $y_1,\ldots,y_4$. Finally replace $p(x)$ by $p(x)-v_1'-v_2'-v_3'-v_4'$. The result is extendable to a symplectic basis $B_2(x')$ of $\ortQ{\pr_2(x),\pr_2(x')}$ that satisfies the desired equation.\qed
\end{rem}

Lemma \ref{l:smh} for $i=2$ is more complicated:

\begin{cor}\label{c:smh}
Let $g\ge 6$ and $n\le 3$. Let $B_2$ be a basis of $\pr_2(\ortQ{\pr_2(x)})$ satisfying \eqref{e:a'} in Remark \ref{r:basis}. Assume $z_1,\ldots,z_n\in B_2$. Then,
\begin{itemize}
	\item[$(i)$] For $k\ge 2$, $\Lconn{2}(g)$ is connected, and if $n\le 2$, then the complex with $a_1$ removed, i.e. $\Lconn{2}(g)\fra (\star(a_1)\fra \link(a_1))$, is also connected.
	\item[$(ii)$] For $k=1$, let $v_1,v_2\in \Lconnk{1}(g)$, and assume  $\gcd_2(x,v_1,v_2)>0$ and $\gcd_2(x,S(v_1,v_2,z_1,z_2,z_3))=1$. Then there exists a path $v_1\to w \to v_2$ in $\Lconnk{1}(g)$ with $\gcdto(x,v_1,v_2,w)>0$.
	\item[$(iii)$] For $k=0$, the full subcomplex of $\Lconnk{0}(g)$ spanned by the vertices $v$ with $\rk^{b_1}(x) =\rk^{b_1}(v)$ is connected. If $v_1,v_2\in \Lconnk{0}(g)$ with $\rk^{b_1}(x) =\rk^{b_1}(v_j)$ and $\gcdto(v_j,x)>0$, then there is a path $v_1\to w\to v_2$ in $\Lconnk{0}(g)$ with $\gcdto(x,v_j,w)>0$.
	\end{itemize}
\end{cor}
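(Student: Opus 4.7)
The plan is to prove each of the three parts by adapting the arguments of Lemma \ref{l:smh} to the $i=2$ setting, keeping in mind that every vertex of $\Ll(g)$ has $\rk^{a_1}=1$, so every ``free'' modification must be performed in $H_2=\pr_2(H)$ and then lifted back by adding $a_1$ (plus, in parts $(ii)$ and $(iii)$, a $b_1$-term and a compensation vector $u\in D$ supplied by Lemma \ref{r:compensate}). Throughout, the strengthened basis $B_2$ of Remark \ref{r:basis} provides the room needed to enforce the various $\gcdto$ hypotheses on $w$.

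For part $(i)$, my key observation is that for $k\ge 2$ the definition $\F^2_k=\star_{\L^2(g)}(a_1)\cap \tilde\F^2_k$ forces $\{x,a_1\}$ to be a simplex of $\F^2_k$ (so in particular $\rk^{b_1}(x)=0$), and since $B_2\subset H_2$ we have $\ialg{a_1,z_j}=0$ for all $j$, so $a_1\in Z$. For any vertex $v$ of $\Lconn{2}(g)$ one then checks directly that $\{x,a_1,v\}$ is isotropic and lies in $\F^2_k\cap W^2$, so $a_1$ is a cone point and $\Lconn{2}(g)$ is contractible. For the subcomplex with $a_1$ removed (assuming $n\le 2$), I would imitate the last part of Lemma \ref{l:smh}: given $v_1,v_2$, project to $H_{k+1}\subset H_2$, apply Prop. \ref{p:dual} to get a splitting $H_{k+1}=(SV_{k+1}\oplus D)\oplus T$ carrying $\pr_{k+1}(x),\pr_{k+1}(v_1),\pr_{k+1}(v_2)$, and count dimensions (using $g\ge 6$, $k\ge 2$, $n\le 2$) to find a simple vector $t\in T\cap\ortZ{\pr_2(z_1),\ldots,\pr_2(z_n)}$. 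Then $w:=a_1+t$ lies in $\Lconn{2}(g)$ and joins $v_1$ to $v_2$ without passing through $a_1$.

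For parts $(ii)$ and $(iii)$ my construction follows the same blueprint but must in addition satisfy the $\gcdto$ hypotheses. Starting from $v_1,v_2$, I would invoke Prop. \ref{p:dual} to obtain a dual summand $D$ of $S(x,v_1,v_2)$ in $H_2$ and a symplectic complement $T\perp\ideal{x,v_1,v_2}$. In part $(ii)$ the strengthening in Remark \ref{r:basis} exploits the hypothesis $\gcdto(x,S(v_1,v_2,z_1,z_2,z_3))=1$ to guarantee that the $\pr_2(z_j)$ are in sufficiently general position that $T\cap\ortZ{\pr_2(z_1),\pr_2(z_2),\pr_2(z_3)}$ contains a simple vector $t$ whose image in $H_2/\ideal{\pr_2(x),\pr_2(v_1),\pr_2(v_2)}$ is nonzero. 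Taking $u\in D$ from Lemma \ref{r:compensate} so that $a_1+t'b_1+u\perp\{x,v_1,v_2\}$ (with $t'$ the value in \eqref{e:t}), I would set $w:=a_1+t'b_1+u+t$; genericity of $t$ secures $\gcdto(x,v_1,v_2,w)>0$, while the construction enforces $w\in\Lconnk{1}(g)$. Part $(iii)$ is parallel, with $t'=\rk^{b_1}(x)$ fixed to arrange $\rk^{b_1}(w)=\rk^{b_1}(x)$ as required.

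The main obstacle will be in part $(ii)$: ensuring simultaneously that $w\perp z_j$ for $j=1,2,3$, that $\{x,v_1,v_2,w\}$ is isotropic, that the $\gcdto$ condition $\gcdto(x,v_1,v_2,w)>0$ holds, and that $w$ lies in $\Lconnk{1}(g)$ (the $\F^2_1$ variant built from $\gcd_2=1$ simplices together with $\F^2_2$). This requires careful sequencing of Prop. \ref{p:dual}: first producing the summand structure in $H_2$, next using Remark \ref{r:basis} to select $z_1,z_2,z_3$ compatibly, and only then adding the compensation $u\in D$, so that adding $u$ does not destroy the orthogonality to the $z_j$ already achieved. This is precisely what the $k=1$ strengthening in Remark \ref{r:basis} is designed to permit.
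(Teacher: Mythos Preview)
Your overall plan (work in $H_2$, then lift by adding $a_1$ plus correction terms) is right, but the execution diverges from the paper in two significant ways, and in part $(ii)$ there is a real gap.

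For the second half of $(i)$ and for $(iii)$, the paper does not build $w$ from scratch via a dimension count in $H_{k+1}$ as you propose. Instead it reduces directly to Lemma~\ref{l:smh}: after identifying $H_2\cong H(g)$, the projections $\pr_2(v_j)=v_j-a_1$ land in the $i=1$ complex $\Lconnxk{\pr_2 x}{k-1}(g)$ (respectively $\Lconnxk{x}{1}(g+1)$ for $k=0$), whose connectivity is already known. One then lifts the resulting path back by adding $a_1$ (and $\rk^{b_1}(x)\,b_1$ in part $(iii)$). For $(iii)$ the paper moreover uses the \emph{specific} $w'$ constructed in the proof of Lemma~\ref{l:smh} to read off $\gcdto(x,w,v_j)=\gcdto(x,v_j)$; a bare connectivity statement would not suffice. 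Your direct approach might be made to work, but it duplicates effort and you would still need to verify the $\gcdto$ claim in $(iii)$.

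For part $(ii)$ your use of Lemma~\ref{r:compensate} does not close: that lemma returns $u$ in the dual summand $D$ of $S(x,v_1,v_2)$, with no control over $\ialg{u,z_j}$, and Remark~\ref{r:basis} does not supply that control either (it constrains the $z_j$ relative to $x$ and $x'$, not relative to an arbitrary choice of $D$). The paper's move is different and is precisely where the hypothesis $\gcdto(x,S(v_1,v_2,z_1,z_2,z_3))=1$ is used: rather than the full dual of $S(x,v_1,v_2)$, one takes a single dual vector $u$ to $\pr_2(x)$ alone, and the hypothesis is exactly what allows this $u$ to be chosen orthogonal to all of $v_1,v_2,z_1,z_2,z_3$ at once. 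One then forms a three-layer successive-dual decomposition (as in \eqref{e:duals} but with $V^0=\ideal{x}$, $V^1=\ideal{v_1,v_2}$, $V^2=\ideal{z_1,z_2,z_3}$), picks a simple $w'\in SV^{2,1}_2\oplus D^2$ orthogonal to the $z_j$, and sets $w=a_1+cu+w'$ with $c\in\Z$ chosen so that $w\perp x$. Orthogonality to $v_1,v_2,z_j$ then comes for free, and since no $b_1$-term appears, $w\in\F^2_2$ as required; the block structure gives $\gcdto(x,v_1,v_2,w)=\gcdto(x,v_1,v_2)>0$.
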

\begin{proof} For $k\ge 2$, the first part is easy since $a_1\in \Lconn{2}(g)$, and for any other $v\in \Lconn{2}(g)^{(0)}$, we see $\set{v,a_1}\in \Lconn{2}(g)$. For the second part of $k\ge 2$, let $v_1,v_2\in \Lconn{2}(g)$ be given. Note $\pr_2(x)=x-a_1$ is simple. By identifying $H_2\iso H(g)$, we can say $\pr_2(v_j)=v_j-a_1\in \Lconnxk{\pr_2 x}{k-1}(g)$, since $z_m\in H_2$ already. By Lemma \ref{l:smh}, $\Lconnxk{\pr_2 x}{k-1}(g)$ is connected when $g\ge 6$, so there is a path $\pr_2(v_1)=w_0'\to w_1'\to\cdots \to w_\ell'=\pr_2(v_2)$. Set $w_j=a_1+w_j'$, then $v_1=w_0\to w_1\to \cdots\to w_\ell=v_2$ is a path from $v_1$ to $v_2$ in $\Lconn{2}(g)$.

For $k=0$, let $v_1,v_2\in \Lconnk{0}(g)^{(0)}$ with $\rk^{b_1}(v_j)=\rk^{b_1}(x)$ be given. Since $v_j\in\F^2_1$, we get $\pr_2(v_j)\in \Lconnxk{x}{1}(g+1)$. Since $g\ge 6$, $\Lconnxk{x}{1}(g+1)$ is connected by Lemma \ref{l:smh}, and from the proof we get a path $\pr_2(v_1)\to w' \to\pr_2(v_2)$ in $\Lconnxk{x}{1}(g+1)$. Then $w'\in H_2$, and we set $w=a_1+\rk^{b_1}(x)b_1+w'\in \F^2_1$. By the choice of $w'$ in the proof, we see $v_1\to w\to v_2$ is a path in $\Lconnk{0}(g)$, and $\gcdto(x,w,v_j)=\gcdto(x,v_j)$, which also shows the last part of $k=0$.

Finally, for $k=1$, we proceed similarly to the proof of Lemma \ref{l:smh} for $k=1$, except that we consider $V^0=\ideal{x}$, $V^1=\ideal{v_1,v_2}$, and $V^2=\ideal{z_1,z_2,z_3}$. Projecting onto $H_2$, we take succesive duals similar to \eqref{e:duals}, which becomes
 \begin{equation}\label{e:duals2}
    (SV^0_{2}\oplus D^0)\oplus(SV^{1,0}_{2}\oplus D^1)\oplus (SV^{2,1}_{2}\oplus D^2)\oplus T^2=H_{2}.
\end{equation}
Since $\gcd_2(x,S(v_1,v_2,z_1,z_2,z_3))=1$, we have $SV^0_{2}=V^0_2=\ideal{\pr_2(x)}$ and $D^0=\ideal{u}$, where we can choose the dual $u\in H_2$ to be orthogonal to $v_1,v_2,z_1,z_2,z_3$. A dimension count shows that there is a simple vector $w'\in SV^{2,1}_{2}\oplus D^2$ such that $w'\perp z_j$ for $j=1,2,3$, and of course $w'\perp x,v_1,v_2$. Set $w=a_1+cu+w'$, where $c\in\Z$ is such that $w\perp x$. Then $w$ is orthogonal to $x,v_1,v_2,z_1,z_2,z_3$, and we see $v_1\to w\to v_2$ is a path in $\Lconnk{1}(g)$. Furthermore, by construction, $\gcd_2(x,v_1,v_2,w)=\gcd_2(x,v_1,v_2)>0$. 
\end{proof}

To prove Lemma \ref{l:A=E} for $i=2$, we now need two steps. The first asserts the existence of the desired basis of $\Ltreort{x,x',b_1}$, under certain conditions:

\begin{lem}\label{l:help} Let $x'\in\F^2_{k+1}$ be given. Assume there are $x_1',x_2',x_3',x_4'\in \F^2_{k+1}(g)^{(0)}$ such that $\set{x,x_j'}\in \F^2_k$ for $j=1,2,3,4$, and there is a basis $\streg B(x_j')$ for $\Ltreort{x,x_j',b_1}$ satisfying Lemma \ref{l:A=E} for $j=1,2,3,4$. Assume further:
\begin{itemize}
	\item[$(a)$]If $k\ge 2$, that $x_j'=A_j$ for $j\le k-1$ and $\gcdto(x,x',x_2', \ldots,x_4')>0$.
	\item[$(b)$]If $k=1$, that $\gcdto(x,x',x_1',\ldots,x_4')=1$. 
	\item[$(c)$]If $k=0$, that either: \quad $1)$ $\set{x,x',x_1',x_2',x_3',x_4'}\in\L^2(g)$, or \\$2)$ $\gcdto(x,x',x_1', \ldots,x_4')>0$ $($if $\pr_2(x)= 0$, omit $x$$)$, and furthermore $\rk^{b_1}(x_j')=\rk^{b_1}(x')=\rk^{b_1}(x)$ for $j=1,2,3,4$.
\end{itemize}
Then there is a basis $\streg B(x')$ of $\Ltreort{x,x',b_1}$ satisfying Lemma \ref{l:A=E}.
\end{lem}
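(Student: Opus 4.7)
The plan is to mimic the proof of Lemma \ref{l:A=E} for $i=1$, using Remark \ref{r:basis} in place of Lemma \ref{l:basis} and Corollary \ref{c:smh} in place of Lemma \ref{l:smh}. First I would apply Remark \ref{r:basis} to produce a basis $\overline{B}(x')$ of $\Ltreort{x,x',b_1}$ such that for each $\bar z = z_1\wedge z_2\wedge z_3\in \overline{B}(x')$ there exists $j\in\{1,2,3,4\}$ with $\bar z\in \Ltreort{x,x_j',b_1}$, and such that in the case $k=1$ the additional divisibility condition $\gcdto(x,S(x',x_j',z_1,z_2,z_3))=1$ holds. (In the cases $k\ge 2$ and $k=0$, case $1$, the hypotheses $(a)$ and $(c.1)$ already guarantee that the ``dual vectors'' required by the basis construction exist as honest basis vectors of summands; in the remaining cases the dual-vector adjustment described in Remark \ref{r:basis} does the job.)

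Next, for each such $\bar z\in \overline{B}(x')$, by hypothesis the basis $\overline{B}(x_j')$ satisfies Lemma \ref{l:A=E}, so $(\bar z,x_j')\in F_{s_j}$ for some $s_j$. To upgrade this to $(\bar z,x')\in F_{s}$, I would connect $x_j'$ to $x'$ through a path in the appropriate auxiliary complex and then quote Lemma \ref{l:claim}. Concretely, set $Z=\ortQ{z_1,z_2,z_3}$ and consider $\Lconn{2}(g)$.

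The case distinction then runs as follows. If $k\ge 2$, Corollary \ref{c:smh}$(i)$ gives a path from $x_j'$ to $x'$ in $\Lconn{2}(g)$; iterating Lemma \ref{l:claim} along the path yields $(\bar z,x')\in F_{s_j+\ell}$. If $k=1$, I would apply Corollary \ref{c:smh}$(ii)$, whose hypotheses $\gcdto(x,x_j',x')>0$ and $\gcdto(x,S(x_j',x',z_1,z_2,z_3))=1$ are precisely what condition $(b)$ together with the improved basis of Remark \ref{r:basis} supply; this gives a path $x_j'\to w\to x'$ of length $2$ in $\Lconnk{1}(g)$ and again Lemma \ref{l:claim} concludes. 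If $k=0$, condition $(c)$ ensures $\rk^{b_1}$ matches across $x,x_j',x'$ and the required $\gcdto$-positivity holds (in case $(c.1)$, from the simplex condition; in case $(c.2)$, from the assumption directly); Corollary \ref{c:smh}$(iii)$ then supplies the length-two path in $\Lconnk{0}(g)$.

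The main obstacle will be verifying the preconditions of Corollary \ref{c:smh} along the full basis $\overline{B}(x')$, especially for $k=1$, where one must simultaneously arrange that $\bar z$ remains orthogonal to all the chosen vectors and that the $\gcdto$ of $x$ with the smallest summand containing the relevant vertices equals $1$. This is why the basis in Remark \ref{r:basis} has to be constructed with extra care (through dual vectors $y_j$ and the correction $p(x)\mapsto p(x)-v_1'-v_2'-v_3'-v_4'$) rather than by the direct method of Lemma \ref{l:basis}. Once the basis is in hand with the stated divisibility guarantee, the connectivity arguments of Corollary \ref{c:smh} and the gradient-path transport of Lemma \ref{l:claim} combine routinely to exhibit each $(\bar z,x')$ in some $F_s$, completing the construction of $\overline{B}(x')$.
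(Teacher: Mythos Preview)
Your plan handles only half of the basis of $\Ltreort{x,x',b_1}$ and misses the part that is genuinely new for $i=2$.

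Remark \ref{r:basis} does \emph{not} produce a basis of $\ortQ{x,x',b_1}$; it produces a basis $B_2(x')$ of the subspace $\pr_2\!\big(\ortQ{\pr_2(x),\pr_2(x')}\big)\subset H_2$. Since $\rk^{a_1}(x)=\rk^{a_1}(x')=1$, one has (for $\gcdto(x,x')\neq 0$) a splitting
\[
\ortQ{x,x',b_1}\;=\;\pr_2\!\big(\ortQ{\pr_2(x),\pr_2(x')}\big)\ \oplus\ \idealQ{\tilde b_1},
\qquad \tilde b_1=b_1-u,
\]
where $u\in (H_\Q)_2$ is any vector with $\ialg{x,u}=\ialg{x',u}=1$. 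Consequently a basis of $\Ltreort{x,x',b_1}$ has \emph{two} kinds of elements: the pure $H_2$-wedges $z_r\wedge z_s\wedge z_t$ with $z_\bullet\in B_2(x')$, and the mixed wedges $\tilde b_1(r,s)\wedge z_r\wedge z_s$. Your path-plus-Lemma~\ref{l:claim} argument is exactly what the paper does for the first kind, and it is fine there.

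The second kind is the real obstacle. The vector $\tilde b_1=b_1-u$ depends on $x'$ through the condition $\ialg{x',u}=1$; in particular it need not be orthogonal to $x_j'$ or to intermediate vertices of a path, so $\tilde b_1\wedge z_r\wedge z_s\notin \Ltreort{x,x_j',b_1}$ and you cannot invoke $\overline B(x_j')$ directly, nor transport it by Lemma~\ref{l:claim}. The paper fixes this by choosing the path $x_m'=v_0\to\cdots\to v_\ell=x'$ so that $\gcdto(x,v_{j-1},v_j)>0$ at each step (for $k\ge2$ by taking the $a_1$-avoiding path in Cor.~\ref{c:smh}$(i)$, for $k=0$ by Cor.~\ref{c:smh}$(iii)$), picking duals $u_j\in (H_\Q)_2$ with $\ialg{x,u_j}=\ialg{v_{j-1},u_j}=\ialg{v_j,u_j}=1$, setting $z^j=(b_1-u_j)\wedge z_r\wedge z_s$, and propagating inductively. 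The crux is that the difference $z^{j+1}-z^j=(u_{j+1}-u_j)\wedge z_r\wedge z_s$ is a pure $H_2$-wedge in $\Ltreort{x,v_j,b_1}$, so the already-established Type~1 argument (applied with $v_j$ in place of $x'$) puts it into some $F_{p(j)}$; combined with the induction hypothesis this gives $(z^{j+1},v_{j+1})\in F_{n(j+1)}$. For $k=1$ the path from Cor.~\ref{c:smh}$(ii)$ satisfies $\gcdto(x,x_m',w,x')>0$, so a single $u$ works for all three vertices and the induction collapses to a direct application of Lemma~\ref{l:claim}. This inductive ``change of $\tilde b_1$ along the path, correct by a pure-$H_2$ term'' is the missing idea in your proposal.
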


\begin{proof}First, in case $(c\:1)$, the result follows directly from Lemma \ref{l:decompose}. So consider the other cases. We claim that if $\pr_2(x)\neq 0$, then
\begin{equation}\label{e:pr2decomp}
    \ortZ{x,x',b_1} = \pr_2(\ortZ{\pr_2(x),\pr_2(x')})\oplus \ideal{\tilde b_1}
\end{equation}
where $\tilde b_1=b_1-u$ for any $u\in H_2$ satisfying $	\ialg{x,u}=\ialg{x',u}=1.$
\eqref{e:pr2decomp} follows since $\rk^{a_1}(x)=\rk^{a_1}(x')=1$ and by a dimension count.

We take a $\Q$-basis $B_2(x')$ of $\pr_2(\ortZ{\pr_2 x, \pr_2 x'})$, as in Remark \ref{r:basis} with $v_j=x_j'$; note this is possible because of, respectively, $(a)$, $(b)$, or $(c\: 2)$. From it we get a $\Q$-basis $\overline B_2(x')$ for $\Ltre(\pr_2(\ortZ{\pr_2 x, \pr_2 x'}))$.

Write $B_2(x')=\set{z_1,\ldots,z_n}$. Now suppose for each pair $z_r, z_s\in B_2(x')$ with $r<s$ we have chosen $u_{rs}$ satisfying $\ialg{x,u_{rs}}=\ialg{x',u_{rs}}=1$. Set $\tilde b_1(r,s)=b_1-u_{rs}$. Then by \eqref{e:pr2decomp}, the following defines a basis of $\Ltreort{x,x',b_1}$:
 \begin{equation*}
    \streg B(x')=\streg B_2(x')\cup \set{\tilde b_1(r,s)\wedge z_r\wedge z_s\mid  1\le r<s \le n}
 \end{equation*}
We will show that for each $z\in \streg B(x')$ there is an $N$ such that $(x',z)\in F_N$, and at the same time construct the $u_{rs}$. 

First consider a given $z=z_r\wedge z_s\wedge z_t\in \streg B_2(x')$. Then there is $m\in\set{1,2,3,4}$ with $z\in \Ltreort{x,x_m',b_1}$. By assumption,  $(z,x_m')\in F_{q}$ for some $q\in\Z$. The assumptions $(a)$, $(b)$, or $(c\: 2)$ ensures that Cor. \ref{c:smh} gives a path $x_m'=w_0\to w_1\to\cdots\to w_\ell=x'$ in $\Lconn{2}(g)$ where $Z=\ortZ{z_r,z_s,z_t}$. (For $k=1$ use Remark \ref{r:basis}.) Then Lemma \ref{l:claim} gives $(z,x')\in F_{q+\ell}$. $(*)$

This finishes the proof for $\gcdto(x,x')=0$, since then $B(x')=B_2(x')$.

Now consider the other type of basis vector in $\streg B(x')$, so let $r<s$ be given. Take $m\in\set{1,2,3,4}$ with $z_r\wedge z_s\in\Lort{2}{x,x_m',b_1}$.

For now, assume $k\neq 1$. By Cor. \ref{c:smh} with $Z=\ortZ{z_r,z_s}$, there is a path $x_m'=v_0\to v_1\to \cdots\to v_\ell=x'$ in $\Lconn{2}(g)$ with $\gcd_2(x,v_j,v_{j-1})>0$; for $k\ge 2$ this is because the path avoids $a_1$. Thus by Prop. \ref{p:dual}, if we take $\Q$-coefficients, we can choose $u_{j}\in (H_\Q)_2$ such that 
\begin{equation}\label{e:uproperties}
	\ialg{x,u_{j}}=\ialg{v_j,u_j}=\ialg{v_{j-1},u_j}=1.
\end{equation}
Now $\tilde b_1^j:= b_1-u_j\in\ortZ{x,v_j,v_{j-1}}$. For $j=0$ just use $v_0=x_m'$ to get $\tilde b_1^0\in\ortZ{x,x_m'}$. We set $u_{r,s}=u_\ell$.

Write $z^j=\tilde b_1^j\wedge z_r\wedge z_s$. Inductively assume there is $n(j)\in\N$ such that $(z^j,w_j)\in F_{n(j)}$. The induction start is the assumption on $\streg B(x'_m)$. For the induction step, suppose $(z^j,v_j)\in F_{n(j)}$. Consider the difference $z^{j+1}-z^j=(u_{j+1}-u_j)\wedge z_2\wedge z_3$. Since $(u_{j+1}-u_j),z_r,z_s\in \pr_2(\ortQ{\pr_2(x),\pr_2(v_j)})$, we can apply $(*)$ above for $x'=v_j$ and get $p(j)\in\N$ with $(z^{j+1}-z^j,v_j)\in F_{p(j)}$. Combined with the induction hypothesis, we see $(z^{j+1},v_j)\in F_{\max(p(j),n(j))}$. Since $\set{x,v_j,v_{j+1}}$ is a simplex, $(z^{j+1},v_{j+1})\in F_{n(j+1)}$ where $n(j+1)=\max(p(j),n(j))+1$. This finishes the induction. We have shown that $z=\tilde b_1(r,s)\wedge z_r\wedge z_s$ satisfies $(z,x')\in F_{n(\ell)}$, 

For $k=1$, the path $x_m'\to w\to x'$ provided by Cor. \ref{c:smh} satisfies $\gcdto(x,x_m',w,x')>0$, so there is now $u\in (H_\Q)_2$ with
	\[\ialg{x,u}=\ialg{x_m',u}=\ialg{w,u}=\ialg{x',u}=1.
\]
Then choose $u_{r,s}=u$. Write $z=(b_1-u)\wedge z_r\wedge z_s$. Lemma \ref{l:claim} gives that $(z,x')\in F_{N}$ for some $N$, because $\streg B(x_m')$ satisfies Lemma \ref{l:A=E}.
\end{proof}

With this, we can show Lemma \ref{l:A=E} for $i=2$:

\begin{proof}[Proof of Lemma \ref{l:A=E} for $i=2$] First, if $k\ge 2:$  We have $x_1,\ldots,x_5\in \F^2_{k+1}$ from Lemma \ref{l:1234}, and we use as $x_1',\ldots,x_4'$ four among $x_1,\ldots,x_5$ such that Lemma \ref{l:help}$(a)$ holds. From Remark \ref{r:start} there are bases $\streg B(x_m)$ as in Lemma \ref{l:A=E}. Now Lemma \ref{l:help} gives the basis $\streg B(x')$ as desired. 

If $k=1$ we consider two cases: First, if $\gcdto(x,x',x_1,x_2,x_3,x_4)=1$, then we argue as in the case $k\ge 2$ with $x_j'=x_j$ for $j=1,2,3,4$, and are done.

If not, then we will find $x_1',\ldots,x_4'\in\F^2_{2}$ such that:
\begin{equation}\label{e:k1}
\gcdto(x,x_1',x_2',x_3',x_4',x_1,x_2,x_3,x_4)=\gcdto(x,x',x_1',x_2',x_3',x_4')=1
\end{equation}
Then Lemma \ref{l:help} gives the basis $\streg B(x'_j)$ for each $j=1,2,3,4$ by using $x_1,x_2,x_3,x_4$, which by Lemma \ref{l:help} along with \eqref{e:k1} gives the basis $\streg B(x')$.

To construct these $x_j'$, let $S=S(x,x',x_1,\ldots,x_4,y)\del H_2$, where $y\in H_2$ denotes a dual vector to $\pr_2(x)$, which is simple since $k=1$.  We can employ $\pr_2(x),\pr_2(x_1),\ldots,\pr_2(x_4)$ as part of a $\Z$-basis of $S$, since 
$\set{x,x_1,x_2,x_3,x_4}\in\F^2_{1}$. Obviously, $\dim S\le 7$, and since $\dim H_2=2g\ge 12$, we can extend this basis of $S$ by four basis vectors, call them $v_1,v_2,v_3,v_4$. Set $x_j'=a_1+v_j+c_jy$ for $j=1,2,3,4$. Here $c_j\in \Z$ is chosen such that $x_j'\perp x$, i.e. $c_j=-\ialg{x,a_1+v_j}$. Now \eqref{e:k1} follows from the fact that 
\begin{equation*}
	1=\gcdto(S,v_1,v_2,v_3,v_4)=\gcdto(S,x_1',x_2',x_3',x_4').
\end{equation*}

Last, if $k=0$, we again consider two cases. First, if $\rk^{b_1}(x')=\rk^{b_1}(x)$. From Lemma \ref{l:1234} $(iv)$ we have $\rk^{b_1}(x_j)=\rk^{b_1}(x)$ for $j=1,2,3,4,5$, so we can do as in $k\ge 2$.

If not, we get from Lemma \ref{l:1234} that $\set{x,x'}^*=\set{x_1',\ldots,x_4'}$ satisfies, 
\begin{equation}\label{}
	\set{x,x',x_1',\ldots,x_4'}\in \F^2_0, \text{ and } 	\rk^{b_1}(x_j')=\rk^{b_1}(x).
\end{equation}
Then by the first part of $k=0$, we have a basis $\streg B(x_j')$ for $j=1,2,3,4$, and thus by Lemma \ref{l:help} we also get $\streg B(x')$ satisfying Lemma \ref{l:A=E}.
\end{proof}

\end{document}